\newcommand{\uA}{\underline A}
\newcommand{\eps}{\varepsilon}
\newcommand{\TTT}{\mathsf{T}\!}  % twist functors; negative indention for better looks
\newcommand{\PPP}{\mathsf{P}\!}
\newcommand{\ds}[1]{(#1)} % degree shift
\newcommand{\Db}{\mathsf{D}^b}  % derived categories
\newcommand{\cZ}{\mathcal{Z}}
\newcommand{\cA}{\mathcal{A}}
\newcommand{\cE}{\mathcal{E}}
\newcommand{\cT}{\mathcal{T}}
\newcommand{\cH}{\mathcal{H}}
\newcommand{\cP}{\mathcal{P}}
\newcommand{\EE}{\mathcal{E}}
\newcommand{\LL}{\mathcal{L}}
\newcommand{\OO}{\mathcal{O}}
\newcommand{\IC}{\mathbb{C}}
\newcommand{\IP}{\mathbb{P}}
\newcommand{\IZ}{\mathbb{Z}}
\newcommand{\IN}{\mathbb{N}}
\newcommand{\kk}{\mathbf{k}}
\newcommand{\HH}{\mathsf{HH}} % Hochschild
\newcommand{\abrunden}[1]{{\left\lfloor #1 \right\rfloor}}
\newcommand{\aufrunden}[1]{{\left\lceil #1 \right\rceil}}
\newcommand{\genby}[1]{{\langle #1 \rangle}}
\newcommand{\Genby}[1]{{\langle\!\langle #1 \rangle\!\rangle}}
\newcommand{\medoplus}[1]{\underset{#1}{\raisebox{.2ex}{$\bigoplus$}}}
\newcommand{\medwedge}[1]{\overset{#1}{\raisebox{.2ex}{$\bigwedge$}}}
\newcommand{\op}{^\text{op}}
\newcommand{\blank}{\:\cdot\:} % for Hom(.,A)
\newcommand{\eqqcolon}{=\mathrel{\mathop:}}
\newcommand{\linearised}[1]{\{#1\}} %linearisations of box-products
\newcommand{\poslin}[1]{^{\linearised{#1}}}
\newcommand{\poslinn}{\poslin{n}}
\newcommand{\neglin}[1]{^{-\linearised{#1}}}
\newcommand{\neglinn}{\neglin{n}}
\newcommand{\reg}{\mathcal O}
\DeclareMathOperator{\D}{\mathsf D}
\DeclareMathOperator{\QCoh}{QCoh}
\DeclareMathOperator{\Coh}{Coh}
\DeclareMathOperator{\Ho}{\mathsf H}
\mathchardef\mhyphen="2D
\newcommand{\hh}{\,\mhyphen}
\DeclareMathOperator{\RHom}{RHom}
\DeclareMathOperator{\Hom}{\mathsf{Hom}}
\DeclareMathOperator{\End}{\mathsf{End}}
\DeclareMathOperator{\Aut}{Aut}
\DeclareMathOperator{\Ext}{\mathsf{Ext}}
\DeclareMathOperator{\Tor}{\mathsf{Tor}}
\DeclareMathOperator{\maxdeg}{\mathsf{maxdeg}}
\DeclareMathOperator{\mindeg}{\mathsf{mindeg}}
\DeclareMathOperator{\id}{id}
\DeclareMathOperator{\Pic}{Pic}
\DeclareMathOperator{\Spec}{Spec}
\DeclareMathOperator{\Bl}{Bl}
\DeclareMathOperator{\Cone}{Cone}
\DeclareMathOperator{\PT}{\mathsf{P}}
\DeclareMathOperator{\Dgmod}{\mathsf{dg-Mod}}
\DeclareMathOperator{\Mod}{\mathsf{Mod}}
\let\mod\relax
\DeclareMathOperator{\mod}{\mathsf{mod}}
\let\deg\relax
\DeclareMathOperator{\deg}{\mathsf{deg}}
\let\min\relax
\DeclareMathOperator{\min}{\mathsf{min}}
\let\max\relax
\DeclareMathOperator{\max}{\mathsf{max}}
\newcommand{\Xan}{X^{\mathsf{an}}}
\newcommand{\isom}{ \text{{\hspace{0.48em}\raisebox{0.8ex}{${\scriptscriptstyle\sim}$}}}
                    \hspace{-0.65em}{\rightarrow}\hspace{0.3em}} % Gewalt, ohne Trennung
\newcommand{\into}{\hookrightarrow}
\newcommand{\onto}{\twoheadrightarrow}
\newcommand{\xto}[1]{\xrightarrow{#1}}
\newcommand{\arXiv}[1]{\href{http://arxiv.org/abs/#1}{\texttt{arXiv:#1}}}
\newcommand{\jstor}[1]{{\href{http://www.jstor.org/stable/#1}{JSTOR:#1}}}
\newcommand{\bib}[5]{{\bibitem{#1} #2, {\emph{#3},} #4#5.}}
\newcommand{\hyref}[2]{\hyperref[#2]{#1~\ref*{#2}}}
\newcommand{\sym}{\mathfrak S}
\newcommand{\alt}{\mathfrak a}
\newcommand{\fm}{\mathfrak m}
\newtheorem{theorem}{Theorem}[section]
\newtheorem*{theorem*}{Theorem}
\newtheorem*{conjecture}{Conjecture}
\newtheorem{theoremalpha}{Theorem}
  \newaliascnt{proposition}{theorem}
  \newtheorem{proposition}[proposition]{Proposition}
  \newaliascnt{lemma}{theorem}
  \newtheorem{lemma}[lemma]{Lemma}
  \newaliascnt{corollary}{theorem}
  \newtheorem{corollary}[corollary]{Corollary}
\theoremstyle{definition}
  \newaliascnt{definition}{theorem}
  \newtheorem{definition}[definition]{Definition}
  \newaliascnt{remark}{theorem}
  \newtheorem{remark}[remark]{Remark}
  \newaliascnt{question}{theorem}
  \newaliascnt{example}{theorem}
  \newtheorem{example}[example]{Example}
\begin{document}

\title[Formality of $\IP$-objects]{Formality of $\IP$-objects}
\author[A. Hochenegger]{Andreas Hochenegger}
\author[A. Krug]{Andreas Krug}

\maketitle

\begin{abstract}
We show that a $\IP$-object and simple configurations of $\IP$-objects have a formal derived endomorphism algebra. Hence the triangulated category (classically) generated by such objects is independent of the ambient triangulated category. We also observe that the category generated by the structure sheaf of a smooth projective variety over the complex numbers only depends on its graded cohomology algebra.
\end{abstract}

\begingroup\renewcommand\thefootnote{}%
\footnote{MSC 2010: 
18E30, % Category theory; homological algebra: derived categories, triangulated categories
14F05, % Algebraic Geometry: Sheaves, derived categories of sheaves and related constructions
16E40  % Associative rings and algebras: (Co)homology of rings and algebras (e.g. Hochschild, cyclic, dihedral, etc.)
}
\footnote{Keywords: $\IP$-object; formality of endomorphism algebras; triangulated category}
\addtocounter{footnote}{-2}\endgroup

\tableofcontents

\addtocontents{toc}{\protect\setcounter{tocdepth}{-1}}  % No toc entry for Introduction
\section*{Introduction}

In recent decades, triangulated categories have become very popular in representation theory and algebraic geometry. 
Given an object $E$ in a $\kk$-linear triangulated category $\cT$, one can consider the triangulated subcategory generated by $E$ inside $\cT$.
Its complexity depends strongly on the graded endomorphism algebra $\End^*(E) = \bigoplus_{i\in\IZ} \Hom(E,E[i])$. 

For example, let $E\in \cT$ be an exceptional object, that is,  $\End^*(E)=\kk$. 
In this case, $E$ generates a category equivalent to the derived category of vector spaces, which can be regarded as the smallest and simplest $\kk$-linear triangulated category. 

In general, due to a result by Keller \cite{Keller-derivingdg}, the generated category $\genby E$ can always be identified with the derived category $\D(B)$ of some differential graded (dg) algebra $B$ whose graded cohomology algebra coincides with the graded endomorphism algebra: $\Ho^*(B)\cong \End^*(E)$. 
(Depending on the exact definition of the category generated by one object, we may have to replace the derived category $\D(B)$ by its subcategory of compact objects, but we will ignore this issue in this introduction and return to it in \autoref{sec:formality-triangulated}.)

Of course, the situation is most pleasant if we already have 
\begin{align}
\tag{$\ast$}
\label{eq:goodsituation}
\genby E = \D(\End^*(E))
\end{align}
 so that the generated category only depends on the graded endomorphism algebra but not on the ambient category $\cT$. In this paper, we provide two situations in which \eqref{eq:goodsituation} holds: For $E$ the direct sum of $\IP$-objects that form a tree (in particular, if $E$ is a single $\IP$-object) and for $E=\reg_X$ the structure sheaf of a smooth projective variety.

It follows from Keller's result that a sufficient condition for \eqref{eq:goodsituation} to hold is that the graded algebra $A \coloneqq \End^*(E)$ is \emph{intrinsically formal}. 
This means that every dg-algebra $B$ with $\Ho^*(B)=A$ is actually quasi-isomorphic to $A$. 
A very useful sufficient criterion for intrinsic formality in terms of vanishing of Hochschild cohomology was given by Kadeishvili \cite{Kadeishvili}. 
This was used by Seidel and Thomas \cite{Seidel-Thomas} to prove intrinsic formality of the endomorphism algebra of $A_n$-configurations of spherical objects. 
The endomorphism algebra of a single spherical object is of the form $\End^*(E)=\kk\oplus \kk[-d]$ for some $d\in \IZ$. 
So, spherical objects are arguably the second simplest type of objects in triangulated categories after exceptional objects. 
Besides this, the main reason for interest in spherical objects is the fact that they induce autoequivalences, so-called \emph{spherical twists}, of triangulated categories. 
We also want to mention that Keller, Yang and Zhou studied the Hall algebra of a triangulated category generated by a single spherical object in \cite{Keller-Yang-Zhou}.

The notion of spherical objects was generalised by Huybrechts and Thomas \cite{Huybrechts-Thomas} to that of $\IP$-objects. 
These objects again induce twist autoequivalences. Furthermore, they play an important role in the theory of hyperk\"ahler manifolds,
but appear also in symplectic geometry; see, for example, \cite{Mak-Wu}.
The graded endomorphism algebra of a $\IP$-object is still rather simple: namely it is generated by one element. 
More precisely, for $n,k$ positive integers, an object $P\in \cT$ is called a \emph{$\IP^n[k]$-like object} if 
\[
 \End^*(P)=\kk[t]/t^{n+1} \quad\text{with}\quad \deg(t)=k\,.
\]
Such an object is called a \emph{$\IP^n[k]$-object} (or just \emph{$\IP$-object}) if it is additionally a Calabi--Yau object; see \autoref{def:P} for details.

\begin{theoremalpha}
\label{main:pn-single}
Let $P$ be a $\IP^n[k]$-like object with $k\ge 2$.
Then $\End^*(P)$ is intrinsically formal so that $\genby{P} \cong \D(\End^*(P))$ is independent of the ambient triangulated category.
\end{theoremalpha}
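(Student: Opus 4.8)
The plan is to deduce intrinsic formality from Kadeishvili's criterion \cite{Kadeishvili}, which turns the problem into a vanishing statement in Hochschild cohomology. Write $A = \End^*(P) = \kk[t]/t^{n+1}$ with $\deg(t)=k$, viewed as a graded algebra. Its Hochschild cohomology $\HH^*(A,A)$ is bigraded by the cohomological (Hochschild) degree $p$ and the internal degree $q$ inherited from the grading of $A$; denote by $\HH^p(A,A)_q$ the internal-degree-$q$ part. The higher $A_\infty$-products $m_p$ with $p\ge 3$ that could obstruct formality are Hochschild $p$-cochains of internal degree $2-p$, so Kadeishvili's theorem guarantees that $A$ is intrinsically formal once
\[
\HH^p(A,A)_{2-p}=0 \quad\text{for all } p\ge 3.
\]
Together with Keller's theorem \cite{Keller-derivingdg}, this immediately yields $\genby{P}\cong\D(\End^*(P))$, independent of $\cT$.

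The first, cheap, observation is that the reduced Hochschild complex $\Hom_\kk(\bar A^{\otimes p},A)$ is concentrated in internal degrees that are multiples of $k$, because both $\bar A=A/\kk$ and $A$ live in degrees divisible by $k$. Hence $\HH^p(A,A)_{2-p}$ can be nonzero only if $k\mid(p-2)$; since $k\ge 2$, the smallest such $p\ge 3$ is $p=k+2\ge 4$, so in particular $p=3$ is free. To control the remaining degrees I would compute $\HH^*(A,A)$ via the classical $2$-periodic free resolution of the hypersurface quotient $A=\kk[t]/(t^{n+1})$ over the enveloping algebra $A^e=A\otimes_\kk A\op$,
\[
\cdots \xrightarrow{\,v\,} A^e \xrightarrow{\,u\,} A^e \xrightarrow{\,v\,} A^e \xrightarrow{\,u\,} A^e \longrightarrow A \longrightarrow 0,
\]
with $u = t\otimes 1 - 1\otimes t$ and $v=\sum_{i=0}^{n} t^i\otimes t^{n-i}$. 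Keeping track of the internal grading, if the $p$-th free module is generated in internal degree $\delta_p$, then $\delta_{2j}=j(n+1)k$ and $\delta_{2j+1}=\bigl(j(n+1)+1\bigr)k$, since $u$ and $v$ are homogeneous of degrees $k$ and $nk$. Applying $\Hom_{A^e}(-,A)$ and using $\Hom_{A^e}(A^e,A)\cong A$, the cochain sending the $p$-th generator to $t^m$ has internal degree $mk-\delta_p$ with $0\le m\le n$; therefore every class of $\HH^p(A,A)$ lies in internal degree at most $nk-\delta_p$.

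It then remains to compare this window with the obstruction degree $2-p$. Because $\delta_p$ grows linearly in $p$ with slope about $\tfrac12(n+1)k$, while $2-p$ decreases with slope $-1$, there is a wide gap: a direct check gives $\delta_p > nk+p-2$, equivalently $nk-\delta_p<2-p$, for every $p\ge 3$ once $k\ge 2$. Thus $2-p$ lies strictly above the top internal degree in which $\HH^p(A,A)$ is supported, so $\HH^p(A,A)_{2-p}=0$, and Kadeishvili's criterion finishes the proof.

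The step I expect to be the main obstacle is the precise bookkeeping of the internal grading, that is, certifying that no class of $\HH^p(A,A)$ can land in internal degree $2-p$ for the critical values $p\equiv 2\pmod k$. The two ingredients above are designed to make this airtight: the mod-$k$ argument removes all but those $p$, and the periodic resolution pins down the exact window of internal degrees for the rest. A point that genuinely requires care is the case of odd $k$, where $A$ is not graded-commutative, so the periodic resolution and its Koszul signs must be set up with the correct super-conventions; the vanishing we need is, however, purely a statement about which internal degrees occur, so the degree-gap mechanism is unaffected and only the sign conventions in the resolution have to be verified.
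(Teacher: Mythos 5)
Your proposal is correct and follows essentially the same route as the paper: Kadeishvili's criterion reduces formality to the vanishing of $\HH^{p,2-p}(A,A)$ for $p>2$, which both you and the paper establish by tracking the internal grading through the classical $2$-periodic free resolution of $A=\kk[t]/t^{n+1}$ over $A^e$ (your generator degrees $\delta_{2j}=j(n+1)k$ and $\delta_{2j+1}=(j(n+1)+1)k$ are exactly the twists $A^e(-i(n+1)k)$ and $A^e(-(i(n+1)+1)k)$ appearing in the paper's proof), and then concluding via Keller's theorem. The extra mod-$k$ observation handling $p=3$ is harmless but redundant, since your degree-gap inequality $\delta_p>nk+p-2$ already covers all $p\ge 3$.
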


One application is that the associated $\IP$-twist can be written as the twist along a spherical functor $F\colon \D(\kk[t])\to \cT$; see \autoref{cor:pntwist-spherical-functor}. 
This is actually a result due to Segal \cite[Prop.~4.2]{Segal}; we provide that the formality assumption there is always given.

A \emph{tree} of $\IP^n[k]$-objects in a triangulated category $\cT$ is given by a  
collection of $\IP^n[k]$-objects $P_i\in \cT$, one for every vertex of a connected graph without loops, such that $\dim_{\kk}\Hom^*(P_i, P_j)=1$ if $i$ and $j$ are adjacent in the graph and $\Hom^*(P_i, P_j)=0$ otherwise.

\begin{theoremalpha}
\label{main:pn-many}
Let $\{P_1,\ldots,P_m\}$ be a tree of $\IP^n[k]$-objects with either $n$ even and $k \geq 2$ or $n=1$ and $k \geq 4$ (the spherical case).
Then 
$\genby{P_1,\ldots,P_m} \cong \D(\End^*(\bigoplus_i P_i))$ 
is independent of the ambient triangulated category.
\end{theoremalpha}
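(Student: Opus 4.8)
The plan is to reduce the statement to an intrinsic formality result, following the same strategy that underlies \autoref{main:pn-single}, but now for the larger graded algebra $A \coloneqq \End^*(\bigoplus_i P_i)$. By Keller's theorem, it suffices to show that $A$ is intrinsically formal, since then any dg-algebra $B$ with $\Ho^*(B) \cong A$ is quasi-isomorphic to $A$, and hence the generated category $\genby{P_1,\ldots,P_m} \cong \D(B) \cong \D(A)$ depends only on $A$. The tool for establishing intrinsic formality is Kadeishvili's criterion: if the Hochschild cohomology $\HH^{p}(A,A)^q$ vanishes in all \emph{negative} internal degrees $q$ with $p \geq 2$ (more precisely, in the range relevant to the obstructions to formality living in $\HH^{p}(A,A)$ of total degree $2-p$), then $A$ is intrinsically formal. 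So the first step is to set up this criterion precisely and identify exactly which Hochschild groups control the $A_\infty$-obstructions.

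Next I would give an explicit description of the graded algebra $A$. Because the configuration is a tree, $A$ is a quiver algebra: the underlying graph (doubled into arrows both ways along each edge) with relations forcing $\Hom^*(P_i,P_j)$ to be one-dimensional in its prescribed degree and the self-extension algebra of each $P_i$ to be $\kk[t_i]/t_i^{n+1}$ with $\deg t_i = k$. The degree constraints are crucial: the generators $t_i$ sit in degree $k$, and the edge morphisms sit in some degree determined by the Calabi--Yau property (in the spherical case $n=1$, $\deg t_i = k$ and the top class is in degree $2k$, with $k \geq 4$; in the even-$n$ case one tracks the CY dimension). The point of the hypotheses ``$n$ even, $k \geq 2$'' or ``$n=1$, $k \geq 4$'' is precisely to push all the generating morphisms into internal degrees large enough that the Hochschild complex cannot produce obstruction classes in the forbidden negative range.

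Then I would compute the relevant Hochschild cohomology. Since $A$ is built from a quiver with relations, I would use a small explicit projective bimodule resolution of $A$ (a Koszul-type or Anick-type resolution adapted to the quadratic/monomial relations), reducing the computation of $\HH^*(A,A)$ to a manageable complex whose terms are indexed by paths and relations in the quiver and graded by the internal degree. The tree hypothesis keeps the combinatorics controlled: there are no cycles in the graph, so the only loops in the quiver are the ``back-and-forth'' paths along edges, and these are governed by the local $\IP^n[k]$ relations, which are exactly those appearing in the single-object case of \autoref{main:pn-single}. I would then verify degree by degree that every Hochschild class of cohomological degree $p \geq 2$ lands in a nonnegative internal degree, so the obstruction groups vanish.

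The main obstacle will be the bookkeeping of internal degrees in the Hochschild complex, specifically controlling the off-diagonal contributions coming from the edges of the tree, where morphisms $P_i \to P_j$ and $P_j \to P_i$ interact with the local polynomial generators $t_i, t_j$. For a single $\IP$-object the degree count is clean, but in the multi-object case one must ensure that compositions along an edge and back do not conspire to create a low-degree (negative) obstruction class; this is exactly where the numerical hypotheses (the parity of $n$ together with the lower bounds on $k$) are needed, and where the two cases genuinely diverge. I expect the bulk of the work to lie in making this degree estimate uniform across all terms of the resolution, and in confirming that the tree (rather than arbitrary graph) structure rules out the problematic cyclic contributions that would otherwise spoil intrinsic formality.
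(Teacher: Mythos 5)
Your overall route is the same as the paper's: reduce to intrinsic formality of $A=\End^*(\bigoplus_i P_i)$ via Keller's theorem, invoke Kadeishvili's criterion ($\HH^{q,2-q}(A,A)=0$ for $q>2$), present $A$ as a tensor algebra with relations over the separable ring $R$ spanned by the idempotents $\id_{P_i}$ (your quiver picture), and prove the vanishing by internal-degree bookkeeping on a minimal bimodule resolution -- the paper uses the Butler--King resolution, whose terms are controlled by $\Tor^A_q(R,R)$. However, two steps of your plan are missing or would fail as stated.

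First, the normalization of degrees. A tree of $\IP^n[k]$-objects only prescribes $\dim_\kk\Hom^*(P_i,P_j)=1$ for adjacent $i,j$; the internal degree $h_{ij}$ of the edge morphism is \emph{not} determined, not even by the Calabi--Yau property, which via Serre duality only forces $h_{ij}+h_{ji}=nk$. In particular, as given, $A$ need not even be non-negatively graded, so no Koszul/Anick/Butler--King machinery applies to it directly. The paper's proof of \autoref{cor:tree} therefore first replaces each $P_i$ by a shift $P_i[n_i]$ so that every edge morphism lands in degree $h=\frac{nk}{2}$ (resp.\ near $\frac k2$ in the spherical case). This is the one place where the tree hypothesis is genuinely used: the absence of cycles guarantees that such shifts can be chosen consistently over the whole graph (a cycle can obstruct this; cf.\ \autoref{rem:closing-condition}), whereas the formality statements themselves (\autoref{prop:pn-moregeneral}, \autoref{prop:spherical-moregeneral}) hold for \emph{arbitrary} graphs once the degrees are normalized. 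So your expectation that the tree structure is needed to tame "cyclic contributions" in the Hochschild complex points at the wrong place.

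Second, your plan to verify vanishing "degree by degree" via a uniform estimate fails at $q=3$ when $n$ is even and $n\ge 4$. After normalization, $\maxdeg(A)=nk$, $\mindeg(I)\ge h+k$ and $\mindeg(J)=k$, so the criterion of \autoref{prop:max-min-degree} at $q=3$ would require $nk+1<\mindeg\Tor^A_3(R,R)$, while the available bound is $\mindeg\Tor^A_3(R,R)\ge h+2k=\frac{nk}2+2k$; this inequality is false for all even $n\ge4$, and the failure is not an artifact of a weak bound, since $\Tor^A_3(R,R)$ really has classes in degree $h+2k$ coming from the relations $t_ja_{ij}=0=a_{ij}t_i$. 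The paper disposes of $\HH^{3,-1}(A,A)$ by a different, congruence-type argument: all generators of $A$ have degrees $k$ and $h=\frac n2k$, so $A$ (hence the term $A^{\otimes 5}$ of the Bar resolution) is concentrated in degrees divisible by $\gcd(k,h)=k>1$, while $A(-1)$ sits in degrees $\equiv -1 \pmod{k}$; hence there is no nonzero degree-zero map. This is precisely where the hypothesis "$n$ even" enters -- it makes $h$ a multiple of $k$ -- and it is not a matter of pushing generators into large degrees. Only in the spherical case $n=1$, $k\ge4$ do pure degree estimates suffice for all $q>2$.
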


Our proof uses Kadeishvili's criterion for intrinsic formality together with a description of minimal resolutions of graded algebras due to Butler and King \cite{Butler-King}.

\autoref{main:pn-many} might be useful in order to prove a faithfulness result for actions induced by $A_m$-configurations of $\IP$-objects; see \autoref{subsec:faithful} for some more explanation on this.

Let $X$ be a smooth projective variety over $\IC$.
A distinguished object in its bounded derived category of coherent sheaves $\Db(\Coh(X))$ is given by the structure sheaf $\reg_X$. 
We use the formality of the Dolbeault complex to prove the following result.

\begin{theoremalpha}
\label{main:structuresheaf}
The category generated by $\reg_X$ in $\Db(\Coh(X))$ only depends on the graded algebra $\End^*(\reg_X)\cong \Ho^*(\reg_X)$. More precisely,
\[
 \genby{\reg_X}\cong \D(\Ho^*(\reg_X))\,.
\]
\end{theoremalpha}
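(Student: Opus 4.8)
The plan is to prove that $\End^*(\reg_X)$ is intrinsically formal by exhibiting a concrete dg-algebra representing $\genby{\reg_X}$ and showing it is quasi-isomorphic to its cohomology. The key input is the Dolbeault resolution: over $\IC$, the structure sheaf $\reg_X$ is computed by the Dolbeault complex $(\mathcal{A}^{0,\bullet}, \bar\partial)$ of smooth $(0,q)$-forms, and its derived endomorphisms are governed by the full complex $(\mathcal{A}^{\bullet,\bullet}, \bar\partial)$ with its wedge-product structure. Concretely, I would compute $\RHom(\reg_X, \reg_X)$ by the dg-algebra $B = (\bigoplus_{p,q} A^{p,q}(X), \bar\partial, \wedge)$ of global smooth forms, equipped with the $\bar\partial$-differential and wedge product; this is a genuine (graded-commutative) dg-algebra whose cohomology is the Dolbeault cohomology $\bigoplus_{p,q} H^{p,q}(X) \cong \Ho^*(\reg_X)$ by the standard identification $\Ext^i(\reg_X,\reg_X) \cong \bigoplus_{p+q=i} H^q(X, \Omega_X^p)$.

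The central step is then to invoke formality of the Dolbeault complex. By the $\partial\bar\partial$-lemma, which holds on any compact Kähler (in particular smooth projective) manifold, the dg-algebra $(A^{\bullet,\bullet}(X), \bar\partial, \wedge)$ is formal: it is connected by a chain of quasi-isomorphisms of dg-algebras to its cohomology algebra, exactly as in the Deligne--Griffiths--Morgan--Sullivan argument for real formality. Indeed, writing $A^c = (\ker \partial, \bar\partial)$ for the subalgebra of $\partial$-closed forms, the inclusion $A^c \into B$ and the projection $A^c \onto H^{\bullet,\bullet}_{\bar\partial}(X)$ are both quasi-isomorphisms of dg-algebras, the latter because $\partial$-closed $\bar\partial$-exact forms are $\bar\partial\partial$-exact by the $\partial\bar\partial$-lemma. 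This shows $B$ is quasi-isomorphic as a dg-algebra to $\Ho^*(\reg_X)$.

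Having established $B \simeq \Ho^*(\reg_X)$ as dg-algebras, I would conclude via Keller's theorem: the category $\genby{\reg_X}$ is equivalent to the subcategory of compact objects (equivalently, the thick subcategory generated by $B$) in $\D(B)$, and a quasi-isomorphism of dg-algebras $B \simeq \Ho^*(\reg_X)$ induces an equivalence of derived categories $\D(B) \cong \D(\Ho^*(\reg_X))$ preserving the distinguished generator. Hence $\genby{\reg_X} \cong \D(\Ho^*(\reg_X))$, and since the right-hand side depends only on the graded algebra $\Ho^*(\reg_X)$, the category is independent of the ambient triangulated category.

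The main obstacle I anticipate is the identification of the abstract derived endomorphism dg-algebra $\RHom(\reg_X, \reg_X)$, as computed inside $\Db(\Coh(X))$, with the \emph{explicit} Dolbeault dg-algebra $B$ \emph{together with its multiplicative structure}. Getting the quasi-isomorphism on cohomology is standard Hodge theory, but upgrading it to an $A_\infty$- or dg-algebra quasi-isomorphism (so that the wedge product genuinely models the Yoneda/composition product on $\Ext^*$) requires care: one must pass through a model of the derived category by dg-modules or use the Dolbeault-theoretic description of $\RHom$ as in the Kähler identities, and verify the products match. Once this comparison is set up correctly, formality of $B$ does all the remaining work, so the crux is purely the construction of the dg-algebra model, not any further vanishing argument.
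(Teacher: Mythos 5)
Your overall strategy --- model the derived endomorphisms of $\reg_X$ by a Dolbeault dg-algebra, deduce formality from the $\partial\bar\partial$-lemma, and conclude via Keller's theorem --- is exactly the paper's, but your concrete model contains a genuine error. You take $B=\bigl(\bigoplus_{p,q}A^{p,q}(X),\bar\partial,\wedge\bigr)$ and justify this with the claim $\Ext^i(\reg_X,\reg_X)\cong\bigoplus_{p+q=i}H^q(X,\Omega_X^p)$. That identification is false: the right-hand side is the Hodge decomposition of de Rham cohomology $H^i(X;\IC)$, whereas $\Ext^i(\reg_X,\reg_X)=\Hom(\reg_X,\reg_X[i])=H^i(X,\reg_X)=H^{0,i}_{\bar\partial}(X)$, because $\RHom(\reg_X,\blank)=R\Gamma(X,\blank)$. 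Consequently $\Ho(B)$ is strictly larger than $\End^*(\reg_X)$ whenever $\dim X>0$ (already the K\"ahler class gives a nonzero element of $H^{1,1}(X)$ that does not come from $H^*(X,\reg_X)$), so the asserted equivalence $\D(B)\cong\D(\Ho^*(\reg_X))$ cannot hold, and your argument computes the wrong category. The fix is to use only the $(0,\bullet)$-row, i.e.\ the dg-algebra $A^{0,\bullet}(X)$ of $(0,q)$-forms with $\bar\partial$ and wedge product, whose cohomology is $\bigoplus_q H^q(X,\reg_X)=\End^*(\reg_X)$; this is precisely what the paper does, and formality of this sub-dg-algebra still follows from the $\partial\bar\partial$-lemma (the paper quotes Neisendorfer--Taylor, \autoref{lem:Dolbeault}); your DGMS-style zigzag through $\ker\partial$ is compatible with this restriction, since all maps involved preserve the holomorphic degree $p$.

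Second, the step you yourself call the crux --- identifying, multiplicatively, the endomorphism dg-algebra of $\reg_X$ in some dg-enhancement with the explicit Dolbeault algebra --- is flagged but left unresolved in your proposal, and it is not automatic. The paper settles it by choosing a specific enhancement: Block's dg-category $\cP_A$, an enhancement of the category $\Db_{\textrm{coh}}$ of the associated analytic space, in which the object $E=(A^{0,0},\bar\partial)$ corresponds to the structure sheaf (via the Dolbeault resolution by sheaves of $(0,q)$-forms) and satisfies $\Hom^\bullet(E,E)\cong A^{0,\bullet}(X)$ essentially by construction; then \autoref{thm:generatedcat} applies, and GAGA transports the conclusion to $\Db(\Coh(X))$. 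Without citing such a construction (or carrying out an equivalent comparison), your argument has a real gap at this point --- though it is a gap the literature fills, not a flaw in the strategy, once the dg-algebra itself is corrected as above.
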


\noindent
This result may be of interest for the conjecture that the graded algebra $\Ho^*(\reg_X)$ is a derived invariant of smooth projective varieties. 

This paper is organised as follows. In \autoref{sec:preliminaries} we fix the notation and collect well-known facts on triangulated categories and dg-algebras. In \autoref{sec:P} we recall the definition of $\IP$-objects and their associated twist. Then, in \autoref{sec:pn-single}, we prove that $\genby P \cong \D(\End^*(P))$ for a $\IP$-object $P$. 
We review the description of the terms of minimal resolutions of graded algebras due to Butler and King \cite{Butler-King}, in the following \autoref{sec:min}. We go through the main steps of its proof to make sure that the results hold in our graded setting. In \autoref{sec:pobjects}, we prove \autoref{main:pn-many} using the description of the terms of the minimal resolutions in order to obtain the vanishing of the relevant Hochschild cohomology. Actually our results on configurations of $\IP^n[k]$-like objects are more general than stated above; see \autoref{prop:pn-moregeneral} and \ref{prop:spherical-moregeneral}. 
We prove \autoref{main:structuresheaf} in \autoref{sec:OX}, 
actually for compact complex manifolds satisfying the $\partial\bar\partial$-Lemma.
In the final \autoref{sec:example} we give a general construction which produces trees of $\IP$-objects out of trees of spherical objects. As a geometric application, we explicitly construct trees of $\IP$-objects on Hilbert schemes of points on surfaces; see \autoref{app:geometric-pns}.

\subsection*{Acknowledgements}

We thank Daniel Huybrechts for asking about the formality of $\IP$-objects and for helpful comments.
Moreover, we are grateful for comments and suggestions from
Ben Anthes, Elena Martinengo, S\"onke Rollenske, Theo Raedschelders, Paolo Stellari, Greg Stevenson, Olaf Schn\"urer and an anonymous referee.
Finally, we mention that Gufang Zhao had already obtained a partial result on the formality of a single $\IP^n$-object with $n \leq 4$.

\addtocontents{toc}{\protect\setcounter{tocdepth}{1}}   % but enable toc entries for other sections
\section{Triangulated categories, dg-algebras and Hochschild cohomology}
\label{sec:preliminaries}

\subsection{Conventions on algebras}

The letter $\kk$ will denote an algebraically closed field. 
All our algebras $A$ will be $\kk$-algebras, and whenever we speak of a graded algebra we mean a graded $\kk$-algebra $A = \bigoplus_{i\in \IZ} A^i$.

For a (graded) $\kk$-algebra $A$, we denote by the (graded) tensor product $A^e \coloneqq A \otimes_\kk A^{op}$ its \emph{enveloping algebra}.
Whenever we speak in the following about ideals or modules over some (not necessarily commutative) algebra, 
we refer to finitely generated left ideals or modules. The formalism of enveloping algebras allows us to speak of $A\hh A$-bimodules as left $A^e$-modules.

\subsection{Conventions on triangulated categories}
\label{sec:conv-tricat}

All triangulated categories are assumed to be $\kk$-linear, and subcategories thereof to be triangulated and full.
The shift functor will be denoted by $[1]$.
All triangles are meant to be distinguished and denoted by $A\to B\to C$, hiding the morphism $C \to A[1]$.
We write $\Hom^*(A,B) = \bigoplus_{i\in\IZ} \Hom(A,B[i])[-i]$ for the derived homomorphisms in a triangulated category: this is a complex equipped with the zero differential.
In contrast, $\Hom^\bullet(A,B)$ will be the homomorphism complex if $A$ and $B$ are objects of a dg-category, usually an enhancement of a triangulated category. In the literature this is sometimes also denoted by $\RHom(A,B)$.

All functors between triangulated categories are meant to be exact. In particular, we will abuse notation and write $\otimes$ for the derived functor $\otimes^L$, using the same symbol as for the functor between abelian categories.

\subsection{dg-algebras and Hochschild cohomology}
\label{sec:hochschild}

\begin{definition}
An \emph{dg-algebra} $A$ (over $\kk$) consists of a graded $\kk$-vector space
\[
A = \bigoplus_{i\in \IZ} A^i
\]
and graded $\kk$-linear maps 
\begin{itemize}
\item $d \colon A \to A$ of degree $1$ and
\item $m \colon A \otimes A \to A$ of degree $0$
\end{itemize}
satisfying the following compatibilities:
\begin{itemize}
\item $d^2=0$, so $d$ is a \emph{differential};
\item $m(m\otimes\id) = m(\id\otimes m)$, so $m$ is an associative multiplication;
\item $d(m(a,b)) = m(da,b) + (-1)^{\deg a} m(a,db)$ for homogeneous elements (the Leibniz rule).
\end{itemize}
\end{definition}

\begin{example}
A graded algebra is a dg-algebra with $d=0$ and $m(a,b) = a\cdot b$ its multiplication. 
Let $A$ be a dg-algebra. Then $m$ induces a multiplication on the cohomology $\Ho(A):=\oplus_{i\in \IZ}A^i$ of $A$ with respect to $d$. So $\Ho(A)$ is a graded algebra.
\end{example}

A morphism $\phi\colon A \to B$ of dg-algebras is a $\kk$-linear map compatible with differential and multiplication, that is, $\phi(d_Aa) = d_B(\phi(a))$ and $\phi(m_A(a,b)) = m_B(\phi(a),\phi(b))$.
Note that $\phi$ induces a map on cohomology $\Ho(\phi)\colon \Ho(A) \to \Ho(B)$.

\begin{definition}
\label{def:quasi-isomorphism}
Let $\phi\colon A \to B$ be a morphism of dg-algebras. 
Then $\phi$ is called a \emph{quasi-isomorphism} if $\Ho(\phi)\colon \Ho(A) \to \Ho(B)$ is an isomorphism of graded $\kk$-algebras.

We say that two dg-algebras are \emph{quasi-isomorphic} if they can be connected by a finite zigzag of quasi-isomorphisms.
\end{definition}

\begin{definition}
A graded algebra $A$ is called \emph{intrinsically formal} if any two dg-algebras with cohomology $A$ are quasi-isomorphic;
or equivalently, if any dg-algebra $B$ with $\Ho(B)=A$ is already quasi-isomorphic to $A$.
\end{definition}

Recall that we denote by $A^e = A \otimes_\kk A\op$ the enveloping algebra of $A$. Note that $A$ has a natural $A^e$-module structure by multiplication from left and right. Given $q\in \IZ$ and two graded $A^e$-modules $N$ and $M$, we write $\Hom_{A^e}^q(N,M)$ for the $A^e$-module homomorphisms which are homogeneous of degree $q$. 

\begin{definition}\label{def:bar}
The complex 
\[
B^\bullet\colon \cdots \to  A^{\otimes (q+2)} \xto{d^{-q}} A^{\otimes (q+1)} \to \cdots \to A^{\otimes 2} \to 0, 
\]
with $A^{\otimes(q+2)}$ in degree $q$, is called the \emph{Bar resolution} of $A$ as an $A^e$-module, where $d = d^{-q}$ is given by
\[
d(a_1 \otimes \cdots \otimes a_{q+2}) = \sum_i \pm a_1 \otimes \cdots \otimes a_i \cdot a_{i+1} \otimes \cdots \otimes a_{q+2}.
\]
Note that the differentials are of degree zero.
\end{definition}

\begin{definition}
Let $A$ be a graded algebra and $M$ a graded $A^e$-module.
The \emph{Hochschild cohomology} of $A$ with values in $M$ is given by 
\[
\HH^{p,q}(A,M) \coloneqq \Ho^p( \Hom_{A^e}^q(B^\bullet,M)) \quad\text{for $p,q\in \IZ$.}
\]
\end{definition}

Our main tool for proving intrinsic formality of certain graded algebras will be the following result due to Kadeishvili.

\begin{proposition}[{\cite[Cor.~4]{Kadeishvili}, cf. \cite[Thm.~4.7]{Seidel-Thomas}, \cite[Cor.~1.9]{Roitz-White}}]
\label{prop:intrinsically-formal}
Let $A$ be a graded algebra. If $\HH^{q,2-q}(A,A)$ vanishes for $q>2$, then $A$ is intrinsically formal.
\end{proposition}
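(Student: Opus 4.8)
The statement is that if the Hochschild cohomology groups $\HH^{q,2-q}(A,A)$ vanish for all $q > 2$, then every dg-algebra $B$ with $\Ho(B) = A$ is quasi-isomorphic to $A$.

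The plan is to pass through the theory of $A_\infty$-algebras, which is the natural framework controlling the failure of formality. First I would invoke Kadeishvili's homotopy transfer theorem: given any dg-algebra $B$, its cohomology $H \coloneqq \Ho(B) = A$ carries an $A_\infty$-structure $\{m_i\}_{i\ge 1}$ with $m_1 = 0$ and $m_2$ equal to the induced multiplication, such that $H$ with this structure is $A_\infty$-quasi-isomorphic to $B$. The formality of $B$ is then equivalent to the existence of an $A_\infty$-quasi-isomorphism between $(A, \{m_i\})$ and $(A, m_2)$, \ie to the possibility of gauging away all the higher products $m_i$ for $i \ge 3$.

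\emph{Key steps.} I would set up an obstruction-theoretic induction. Suppose that by an $A_\infty$-isomorphism we have already arranged $m_i = 0$ for all $3 \le i < N$, for some $N \ge 3$; the base case $N = 3$ is automatic. The $A_\infty$-associativity relations (the Stasheff identities) then force the first surviving higher product $m_N$ to be a Hochschild cocycle: under the grading conventions here, $m_N$ is a degree-$(2-N)$ map $A^{\otimes N} \to A$, which is precisely a cochain computing $\HH^{N,2-N}(A,A)$, and the relevant Stasheff identity says $d_{\mathrm{Hoch}}(m_N) = 0$. By hypothesis this cohomology group vanishes for $N > 2$, so $m_N = d_{\mathrm{Hoch}}(\phi_{N-1})$ is a coboundary for some $(N-1)$-cochain $\phi_{N-1}$. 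I would then use $\phi_{N-1}$ as the top nontrivial component of a new $A_\infty$-isomorphism of $(A,\{m_i\})$ to itself which eliminates $m_N$ while not disturbing the lower products $m_3,\ldots,m_{N-1}$ (which are already zero). This completes the inductive step and, in the limit, produces an $A_\infty$-quasi-isomorphism to the formal algebra $(A, m_2) = A$.

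\emph{Main obstacle.} The routine but delicate part is the precise bookkeeping of signs and degrees, and verifying that the transfer of $m_N$ to a Hochschild cocycle is exactly the cocycle condition for the bidegree $(N, 2-N)$ appearing in the hypothesis. The genuinely substantive point is checking that gauging away $m_N$ via $\phi_{N-1}$ does not reintroduce nonzero products in degrees below $N$: one must confirm that the components of the $A_\infty$-isomorphism below the top degree can be taken to be identities so that the lower-order structure is preserved. This is standard in the $A_\infty$ literature (and is the content of Kadeishvili's original argument), so for the purposes of this paper I would simply cite \cite{Kadeishvili} and the reformulations in \cite{Seidel-Thomas} and \cite{Roitz-White} rather than reproduce the full inductive construction.
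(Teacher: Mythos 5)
The paper states this proposition as a known result, citing Kadeishvili, Seidel--Thomas and Roitzheim--Whitehouse, and gives no proof of its own; your proposal ends by doing exactly the same, and the obstruction-theoretic $A_\infty$-sketch you give along the way (transfer a minimal $A_\infty$-structure onto $\Ho(B)=A$, then inductively gauge away $m_N$ using the vanishing of $\HH^{N,2-N}(A,A)$, with the degree bookkeeping as you describe) is precisely the argument contained in those cited references. Your proposal is therefore correct and takes essentially the same route as the paper, namely deferring the full inductive construction to the literature.
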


\begin{definition}
For an $A^e$-module $M = \bigoplus M^q$, its \emph{shift in degree} by $i$ is the $A^e$-module
$M\ds{i}$ with $M\ds{i}^q = M^{q+i}$. 
\end{definition}

\begin{remark}\label{rem:HHres}
Note that $\Hom_{A^e}^q(A,A)=\Hom_{A^e}^0(A,A(q))$. Moreover, graded $A^e$-modules, with $\Hom_{A^e}^0$ as morphisms, form an abelian category $\textsf{gr}A^e$. There, the Bar resolution is a projective resolution of the $A^e$-module $A$.  
It follows that $\HH^{p,q}(A,M) = \Ext^p_{\textsf{gr}A^e}(A,M(q))$. Hence, for any other  projective resolution $P^\bullet$ of $A$ as a graded $A^e$-module, we also have
\[
\HH^{p,q}(A,M)=\Ho^p\bigl(\Hom_{A^e}^q(P^\bullet, M)\bigr)= \Ho^p\bigl(\Hom_{A^e}^0(P^\bullet, M(q))\bigr).
\]
\end{remark}

\subsection{Derived categories and dg-categories}

In this paper we will encounter two types of derived categories. First, for an abelian category $\cA$ there is the category $\D(\cA)$ which is the localisation of the homotopy category of complexes with values in $\cA$ at the class of quasi-isomorphisms.
In our examples, the abelian category $\cA$ will be a category of (coherent or quasi-coherent) $\reg_X$-modules over a variety or manifold $X$. For details on $\D(\cA)$ see, for example, \cite[Ch.~2]{Huybrechts}.

Let us very quickly recall some facts and fix notation concerning dg-categories and enhancements; see, for example, \cite[\S 3]{Kuz-Lunts} for details. 
A \emph{dg-category} is a $\kk$-linear category $\cE$ whose Hom-spaces are dg-$\kk$-modules and the compositions are compatible with the dg-structure.
The \emph{homotopy category} $\Ho^0(\cE)$ is defined to have the same objects as $\cE$ and morphisms $\Hom_{\Ho^0(\cE)}(E, F):=\Ho^0(\Hom_{\cE}(E,F))$.
The category $\Dgmod(\cE)$ of (right) dg-modules over $\cE$ is defined as the category of dg-functors from $\cE\op$ to the category of dg-modules over $\kk$. Its homotopy category $\Ho^0(\Dgmod(\cE))$ carries the structure of a triangulated category. The dg-category $\cE$ is called \emph{pretriangulated} if the image of the Yoneda embedding $\Ho^0(\cE)\hookrightarrow \Ho^0(\Dgmod(\cE))$ is a triangulated subcategory.         

Given a triangulated category $\cT$, a \emph{dg-enhancement} of $\cT$ is a pretriangulated dg-category $\cE$ together with an exact equivalence $\Phi\colon \Ho^0(\EE) \isom \cT$. 

We can consider every dg-algebra $A$ as a dg-category with one object. Then the homotopy category $\Ho^0(\Dgmod(\cE))$ of dg-modules over that category agrees with the usual notion of the category of dg-modules over the algebra $A$. 
The derived category of the a dg-algebra $A$ is defined as $\D(A):=\Ho^0(\Dgmod(A))[\mathsf{qis}^{-1}]$, the category $\Dgmod(A)$ of right dg-modules over $A$ localised at the class of quasi-isomorphisms; for details see, for example, \cite[\S 8]{Keller-tilting}. 
If the dg-algebra $A$ is concentrated in degree 0 (i.e.\ it is an ordinary algebra), then $\D(A)=\D(\Mod(A))$ where the latter is the derived category of the abelian category $\Mod(A)$ in the sense explained above.

\subsection{Formality and triangulated categories}
\label{sec:formality-triangulated}

The following results are well known to experts and can be found in essence or in part in, for example, the survey \cite{Keller-dg} by Keller, the lecture notes \cite{Toen} by To\"en or the book \cite[\S 10]{Bernstein-Lunts} by  Bernstein and Lunts. 

We recall terminology. Let $\cT$ be a triangulated category.  
The category $\cT$ is called \emph{cocomplete} if arbitrary direct summands exist. 
It is called \textit{idempotent complete} if every idempotent endomorphism splits. An object $T\in \cT$ is called \emph{compact} if for every set $\{Y_i\}$ of objects in $\cT$ the natural morphism
\[
\medoplus{i} \Hom(T, Y_i)\to \Hom(T, \medoplus{i} Y_i)
\]
is an isomorphism. We write $\cT^c\subset \cT$ for the full subcategory of compact objects. It is a \emph{thick} (i.e.\ closed under direct summands) triangulated subcategory of $\cT$.

For an object $T\in \cT$, we write $\Genby{T}$ for the smallest cocomplete triangulated subcategory of $\cT$ containing $T$. 
We write $\genby{T}$ for the smallest thick  triangulated subcategory of $\cT$ containing $T$. If $T$ is compact, then  $\genby{T}\subset \cT^c$. 
There is an inclusion $\genby{T} \subset \Genby{T}$, since every cocomplete triangulated category is thick; see \cite[Prop.\ 1.6.8]{Neeman-book}.

The compact objects in the derived category $\D(\QCoh(X))$ of coherent sheaves on a separated scheme of finite type over $\kk$ coincide with the \emph{perfect} objects, that is, objects locally quasi-isomorphic to bounded complexes of locally free sheaves of finite rank.

The following statement can be found in a similar form in \cite[\S 4.2]{Keller-derivingdg} or \cite[Prop.\ 1.16 \& 1.17]{Lunts-Orlov}.

\begin{theorem}
\label{thm:generatedcat}
Let $\cT$ be a triangulated category with a dg-en\-han\-ce\-ment given by a dg-category $\EE$ and an equivalence $\Phi\colon \Ho^0(\EE) \isom \cT$.
Let $T\in \cT$ be a compact object, $E\in \EE$ some object with $\Phi(E)\cong T$, and consider the dg-algebra $B=\Hom^\bullet(E,E)$. 
\begin{enumerate}
\item\label{thm:i} Let $\cT$ be idempotent complete. Then there is an exact equivalence $\genby{T}\cong \D(B)^c$.
\item Let $\cT$ be cocomplete. Then there are exact equivalences 
\[
\Genby{T} \cong \D(B)\quad \text{and} \quad \genby{T} \cong \D(B)^c\,. 
\]
\end{enumerate}
\end{theorem}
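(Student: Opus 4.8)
The plan is to reduce both statements to a single fundamental fact, essentially due to Keller: a cocomplete triangulated category admitting a dg-enhancement and possessing one \emph{compact generator} $P$ is equivalent to $\D(\End^\bullet(P))$. I would first establish part~(2), where $\cT$ is already cocomplete, and then bootstrap part~(1) from it by passing to the Yoneda embedding into a cocomplete category.

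For part~(2): since $\cT$ is cocomplete, so is the subcategory $\Genby{T}$, and it inherits a dg-enhancement by restricting $\EE$ to the full dg-subcategory on those objects that map into $\Genby T$. Within $\Genby T$ the object $T$ is a compact generator: it generates by the very definition of $\Genby T$, and it remains compact because the inclusion $\Genby T\subset\cT$ preserves coproducts, so coproducts computed in $\Genby T$ agree with those in $\cT$. The enhanced object $E$ satisfies $\Hom^\bullet(E,E)=B$, so the cited result yields an exact equivalence $\Genby T \isom \D(B)$ carrying $T$ to the free module $B$. Concretely, this equivalence is induced by the dg-functor from the one-object dg-category $B$ into $\EE$ that picks out $E$: derived restriction along it sends $F \mapsto \Hom^\bullet(E,F)$, regarded as a dg-$B$-module, and derived induction $-\otimes^{\mathbf L}_B E$ provides the inverse. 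For the statement about compacts, note that $\Genby T$ is cocomplete and compactly generated by the single compact object $T$, so by Neeman's theorem its subcategory of compacts is exactly the thick closure $\genby T$; likewise $\D(B)^c=\genby{B}$. As equivalences preserve compact objects, the equivalence above restricts to $\genby T \isom \D(B)^c$.

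For part~(1): here $\cT$ need not be cocomplete, so I cannot form $\Genby T$ inside it. Instead I use the Yoneda embedding $Y\colon \cT\cong \Ho^0(\EE)\embed \D(\EE)$, which is exact and fully faithful since $\EE$ is pretriangulated, and which sends $T$ to the representable module $h_E$. Now $\D(\EE)$ is cocomplete, $h_E$ is compact (evaluation at $E$ commutes with coproducts of dg-modules), and $\Hom^\bullet(h_E,h_E)\simeq B$ by the dg-Yoneda lemma. Applying the already-proven part~(2) to $\D(\EE)$ and $h_E$ gives an equivalence $\genby{h_E}\isom \D(B)^c$, where $\genby{h_E}$ is the thick closure in $\D(\EE)$. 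It then remains to identify $Y(\genby T)=\genby{h_E}$. Since $\cT$ is idempotent complete, so is its thick subcategory $\genby T$; because $Y$ is exact and fully faithful, the essential image $Y(\genby T)$ is closed under direct summands in $\D(\EE)$ (an idempotent on $Y(X)$ pulls back to an idempotent on $X$, which splits inside $\genby T$). Thus $Y(\genby T)$ is a thick subcategory of $\D(\EE)$ containing $h_E$, hence it contains $\genby{h_E}$; the reverse inclusion is immediate from exactness of $Y$. Therefore $\genby T \cong Y(\genby T)=\genby{h_E}\isom \D(B)^c$.

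The main obstacle is the core equivalence invoked in part~(2): proving from scratch that a cocomplete triangulated category with a dg-enhancement and one compact generator $P$ is equivalent to $\D(\End^\bullet P)$. This is where the genuine dg-machinery lives — one must build the restriction/induction adjunction along the dg-functor $B\to\EE$, pass to derived functors compatibly with localisation at quasi-isomorphisms, and verify that induction preserves coproducts and is fully faithful on the compact generator, so that a standard generation argument promotes it to an equivalence onto $\Genby T$. As this is precisely the content of the cited results of Keller and of Lunts and Orlov, our task reduces to assembling these inputs correctly and to the bookkeeping needed to match the intrinsically-defined subcategories $\Genby T,\genby T\subset\cT$ with their module-theoretic counterparts; the idempotent-completeness hypothesis in part~(1) is used exactly to make that matching go through.
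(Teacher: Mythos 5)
Your proposal is correct, but it takes a genuinely different route from the paper. The paper's entire proof is a one-line citation: both parts follow from \cite[Prop.~B.1]{Lunts-Schn} by plugging in $P=I=E$ and $z=\id_E$, so all of the dg-machinery --- the restriction/induction adjunction along $B\to\EE$, its compatibility with localisation at quasi-isomorphisms, and the simultaneous treatment of the idempotent-complete and cocomplete cases --- is outsourced to that single proposition. You instead decompose the statement: part (2) is reduced to Keller's compact-generator theorem (a cocomplete dg-enhanced triangulated category with a compact generator $P$ is equivalent to $\D(\Hom^\bullet(P,P))$) combined with Neeman's identification of the compact objects of a compactly generated category with the thick closure of the generator; part (1) is then bootstrapped from part (2) by embedding $\cT\cong\Ho^0(\EE)$ into the cocomplete category $\D(\EE)$ of dg-modules via Yoneda, and using idempotent completeness of $\cT$ exactly where it is needed, namely to show that the essential image of $\genby{T}$ is thick in $\D(\EE)$ and hence coincides with $\genby{h_E}$. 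Your supporting verifications are sound: representables are h-projective, so Yoneda into the derived category is fully faithful and $h_E$ is compact; a thick subcategory of an idempotent complete category is idempotent complete; and $T$ is a compact generator of $\Genby{T}$ --- though here you should say one more word, since ``generates the smallest localizing subcategory'' and ``has trivial right orthogonal'' are a priori different notions of generation, which agree for compact $T$ by the standard argument that $\{Y : \Hom(Y,X[n])=0 \text{ for all } n\}$ is localizing. What each approach buys: the paper's citation is shorter, and the cited proposition is tailored to deliver both cases at once; your assembly makes transparent that part (1) is a formal consequence of part (2) and shows precisely where each hypothesis (cocompleteness versus idempotent completeness) enters, at the price of invoking Keller's theorem as a black box --- a citation of essentially the same weight as the paper's.
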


\begin{proof}
This follows from \cite[Prop.\ B.1]{Lunts-Schn} by plugging in $P=I=E$ and $z=\id_E$. 
\end{proof}

\begin{remark}
The first part of the theorem applies to any bounded derived category $\Db(\cA)$ of an abelian category $\cA$. On the one hand, $\Db(\cA)$ admits a (not necessarily unique) dg-enhancement. On the other hand, $\Db(\cA)$ is automatically idempotent complete, which holds even if $\cA$ is just an idempotent complete exact category by \cite[Thm.~2.8]{Balmer-Schlichting}.
\end{remark}

\begin{corollary}
\label{cor:intformal}
Let $\cT$ be a cocomplete dg-enhanced triangulated category and $T\in \cT^c$ a compact object.
If the graded algebra $A = \End^*(T)$ is intrinsically formal, then 
\[
\Genby{T}\cong \D(A) \quad\text{and}\quad \genby{T}\cong \D(A)^c\,. 
\]
\end{corollary}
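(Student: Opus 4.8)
The plan is to reduce the statement to \autoref{thm:generatedcat} and then replace the dg-endomorphism algebra appearing there by the formal model $A$. Since $\cT$ is dg-enhanced, fix a dg-enhancement $\EE$ together with an equivalence $\Phi\colon \Ho^0(\EE)\isom\cT$, and pick an object $E\in\EE$ with $\Phi(E)\cong T$. Set $B=\Hom^\bullet(E,E)$, the dg-endomorphism algebra of $E$. By \autoref{thm:generatedcat}, and because $\cT$ is cocomplete, there are exact equivalences $\Genby{T}\cong\D(B)$ and $\genby{T}\cong\D(B)^c$. It therefore suffices to produce an exact equivalence $\D(B)\cong\D(A)$ which restricts to $\D(B)^c\cong\D(A)^c$.

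First I would identify the cohomology algebra of $B$. Because $\EE$ enhances $\cT$, taking cohomology of the hom-complexes recovers morphisms in $\Ho^0(\EE)\cong\cT$: for every $i\in\IZ$ one has $\Ho^i(B)\cong\Hom_\cT(T,T[i])$, and the multiplication on $B$ is composition, which on cohomology is precisely the product defining the graded algebra structure of $\End^*(T)$. Hence $\Ho^*(B)\cong\End^*(T)=A$ as graded $\kk$-algebras.

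Next I would invoke the formality hypothesis. Since $A$ is intrinsically formal and $\Ho(B)=A$, the dg-algebra $B$ is quasi-isomorphic to $A$ (viewed as a dg-algebra with zero differential), \ie connected to $A$ by a finite zigzag of quasi-isomorphisms. It is standard that a quasi-isomorphism of dg-algebras induces an exact equivalence of derived categories (see \cite[\S 8]{Keller-tilting}); composing the equivalences along the zigzag yields $\D(B)\cong\D(A)$. As equivalences of triangulated categories automatically preserve compact objects, this restricts to $\D(B)^c\cong\D(A)^c$. Combining with the two equivalences from \autoref{thm:generatedcat} gives $\Genby{T}\cong\D(A)$ and $\genby{T}\cong\D(A)^c$, as claimed.

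The argument is essentially formal, so there is no serious obstacle; the one point that deserves care is the identification $\Ho^*(B)\cong A$ as graded \emph{algebras} rather than merely as graded vector spaces, since intrinsic formality is a statement about dg-algebras and the multiplicative structure must be matched. This is guaranteed by the enhancement, as the product on $B$ is composition of morphisms in $\EE$ and $\Phi$ is a $\kk$-linear exact equivalence compatible with composition. A secondary, routine point is the invariance of $\D(-)$ and of its subcategory of compact objects under quasi-isomorphism, which is part of the standard dg-formalism recalled above.
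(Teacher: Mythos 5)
Your proposal is correct and follows essentially the same route as the paper: invoke \autoref{thm:generatedcat} to get $\Genby{T}\cong\D(B)$ and $\genby{T}\cong\D(B)^c$ for $B=\Hom^\bullet(E,E)$, use intrinsic formality to obtain $\D(B)\cong\D(A)$ (the paper cites \cite[Thm.~10.12.5.1]{Bernstein-Lunts} where you cite Keller, but the content is the same invariance of derived categories under quasi-isomorphism), and restrict to compact objects. Your added care about matching the multiplicative structure in $\Ho^*(B)\cong A$ is a point the paper leaves implicit but is handled identically by the enhancement.
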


\begin{proof}
Let $E \in \EE$ be some object with $\Phi(E)=T$ and $B = \Hom^\bullet(E,E)$.
Then $\Genby{T} \cong \D(B)$ by \autoref{thm:generatedcat}.
As by assumption $A$ is intrinsically formal, $\D(B) \cong \D(A) = \D(\Mod(A))$ by \cite[Thm.~10.12.5.1]{Bernstein-Lunts}.
Restricting to compact objects, this yields that $\genby{T} = \D(B)^c = \D(\Mod(A))^c$. 
\end{proof}

\begin{remark}
Let $A$ be an intrinsically formal graded algebra. By the corollary above, the categories generated by objects $T\in \cT$ with $\End^*(D)=A$ are all equivalent. In particular, the category generated by such an object is independent of the ambient cocomplete dg-enhanced triangulated category~$\cT$.
\end{remark}

\section{$\IP$-objects}\label{sec:P}

\subsection{Definition and basic examples}

\begin{definition}\label{def:P}
Let $P$ be an object in a $\kk$-linear triangulated category $\cT$.
\begin{itemize}
\item If $\End^*(P) \cong \kk[t]/t^{n+1}$ as graded $\kk$-algebras with $\deg(t)=k$, then we call $P$ a \emph{$\IP^n[k]$-like object}.
\item If a $\IP^n[k]$-like object $P$ is also $nk$-Calabi--Yau object, i.e.\ $\Hom^*(P,\blank) = \Hom^*(\blank,P[nk])^\vee$ functorially, then $P$ is called a \emph{$\IP^n[k]$-object}.
\end{itemize}
In some cases we will omit the integers $n,k$ and just speak of $\IP$-like objects or $\IP$-objects.
\end{definition}

$\IP^1[k]$-objects are well-known as \emph{spherical objects}; see \cite{Seidel-Thomas}. Without the Calabi--Yau property they are called \emph{spherelike objects} and studied in \cite{Hochenegger-Kalck-Ploog,Hochenegger-Kalck-Ploog-RT} by Kalck, Ploog and the first author.

$\IP^n[2]$-objects are known as \emph{$\IP^n$-objects} and studied in \cite{Huybrechts-Thomas} by Huybrechts and Thomas. The focus there is on hyperk\"ahler manifolds, whose structure sheaves are $\IP^n$-objects.
Another standard example of a $\IP^n$-object is the structure sheaf $\reg_Z\in \D(X)$ of the centre $\reg_{\IP^n}\cong Z\subset X$ of a Mukai flop of a variety of dimension $\dim X=2n$. 

The terminology $\IP^n[k]$ was introduced by the second author in \cite{Krug}, where examples of varieties are also given, whose structure sheaves are $\IP^n[k]$-objects.

\begin{remark}
A $\IP$-like object $P$ is already a $\IP$-object in $\genby{P}$.
Therefore our main question about the independence of $\genby{P}$ of the ambient category does not rely on the Calabi--Yau property.
As a (possibly misleading) consequence, in \cite{Keller-Yang-Zhou} the Calabi--Yau property of spherical objects is never mentioned.
\end{remark}

\begin{remark}
Let $X$ be a variety of dimension $nk$ such that $\OO_X$ is a $\IP^n[k]$-like object in $\Db(X)$, that is, $\End^*(\OO_X) = \kk[t]/t^{n+1}$ and $\deg(t)=k$.
Note that $\End^*(\OO_X) = H^*(\OO_X)$ as graded $\kk$-algebras, where the Yoneda product on the left becomes the cup product on the right.
As the cup product is graded commutative, $k$ odd implies immediately $t^2=0$, so $n=1$ and $\OO_X$ is spherelike.
Consequently, $n>1$ is only possible for even $k$.

However, the graded endomorphism algebra $\End^*(E)$ of an arbitrary object $E\in \cT$ does not need to be graded commutative. In fact, there are examples of $\IP^n[k]$-like objects with $n\ge 2$ and $k$ odd.
For a trivial example, consider the dg-algebra $A = \kk[t]/t^{n+1}$ with trivial differential and $\deg(t)=k$, where $n \ge 0$ and $k$ are integers. Then $A$ is a $\IP^n[k]$-object inside $\D(A)$.
For examples of $\IP^n[1]$-objects of geometric origin, see \cite[Ex. 4.2 (5) \& (6)]{Addington}.
\end{remark}

\subsection{Associated $\IP$-twists}
\label{sec:p-twists}

In this subsection we assume that $\cT$ is a $\kk$-linear triangulated category that admits a dg-enhancement and that the $\IP^n[k]$-object $P\in \cT$ is \emph{proper}, that is, $\Hom^*(P,F)$ is a finite-dimensional graded vector space for all $F \in \cT$. 
Under these assumptions there is an autoequivalence $\PPP_P\colon \cT\isom \cT$, the \emph{$\IP$-twist along $P$}, whose construction, due to \cite{Huybrechts-Thomas}, we sketch in the following.
Whenever we speak about the $\IP$-twist associated to a $\IP$-like object in later sections, we will tacitly assume that these assumptions are met.

So, let $P$ be a $\IP^n[k]$-object and $t$ be a non-zero element of $\Ext^k(P,P)$.
Using this generator, one can define the upper triangle for any $F \in \cT$:
\[
\begin{tikzcd}
\Hom^*(P,F)\otimes P[-k] \ar[r, "H", "t\otimes \id - \id \otimes t"'] \ar[dr, "\mathrm{ev} \circ H"'] &[3em] \Hom^*(P,F) \otimes P \ar[r] \ar[d, "\mathrm{ev}"] & \Cone(H) \ar[ld, dashrightarrow] \\
&  F  \ar[dl, dashrightarrow]\\
\PPP_P(F)
\end{tikzcd}
\]
As the composition $\mathrm{ev}\circ H = 0$, the arrow $\Cone(H) \dashrightarrow F$ exists. Completing this arrow to a triangle gives the double cone which we denote by $\PPP_P(F)$. 

\begin{remark}
The dg-enhancement of $\cT$ is necessary to actually define the $\IP$-twist $\PPP_P$ as a functor. 
In the case of spherical twists see \cite{Anno-Logvinenko} for a proper treatment and \cite[\S 3.1]{Hochenegger-Kalck-Ploog} for a rough idea.
In the geometric setting, Fourier-Mukai kernels allow us to circumvent dg-categories; see \cite[\S 2]{Huybrechts-Thomas}, where it is also shown that, in the geometric set-up, the above double cone construction leads to a \textit{unique} autoequivalence $\PPP_P$.
The uniqueness of $\PPP_P$ in the general case is proved in \cite{Anno-Logvinenko-Pn}.
\end{remark}

\begin{proposition}[{c.f.\ \cite[Prop.~2.6]{Huybrechts-Thomas}}]
Let $P$ be a $\IP$-object. Then the associated $\IP$-twist $\PPP_P$ is an autoequivalence.
\end{proposition}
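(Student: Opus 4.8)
The plan is to show that the exact endofunctor $\PPP_P$ is an autoequivalence by verifying that it is fully faithful and essentially surjective, using a spanning class built out of $P$ and its orthogonal. Throughout I use the standing assumptions (dg-enhancement, $P$ proper), which guarantee that $\PPP_P$ is exact and admits a right adjoint $\PPP_P^R$. I would first record the Calabi--Yau symmetry: since $P$ is $nk$-Calabi--Yau, $\Hom^*(P,F)=\Hom^*(F,P[nk])\dual$ functorially, so the right orthogonal $P\orth=\{F:\Hom^*(P,F)=0\}$ coincides with the left orthogonal $\lorth P$. I would then set $\Omega=\{P\}\cup P\orth$ and check that it is a spanning class: if $\Hom^*(P,X)=0$ then $X\in P\orth\subseteq\Omega$, whence $\Hom^*(X,X)=0$ and $X=0$; the reverse direction is identical, using the identification $P\orth=\lorth P$ to deduce $X\in P\orth$ from $\Hom^*(X,P)=0$.

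The two computations at the heart of the argument describe $\PPP_P$ on the pieces of $\Omega$. For $F\in P\orth$ the term $\Hom^*(P,F)\otimes P$ vanishes, so both cones in the construction are trivial and $\PPP_P(F)\cong F$; thus $\PPP_P$ restricts to the identity on $P\orth$. For $F=P$ I would unwind the double cone explicitly: here $\Hom^*(P,P)\otimes P\cong\bigoplus_{i=0}^{n}P[-ik]$, and the map $H=t\otimes\id-\id\otimes t$ becomes, after reindexing, a two-term complex with invertible (identity) entries along one diagonal and multiplication-by-$t$ along the adjacent one. Gaussian elimination collapses this telescope, and because $t^{n+1}=0$ the surviving differential vanishes, giving $\Cone(H)\cong P\oplus P[1-(n+1)k]$; feeding this into the second cone yields $\PPP_P(P)\cong P[\,2-(n+1)k\,]$, a shift of $P$. (For $n=1$ this recovers the fact that $\PPP_P$ acts on $P$ like the square of the spherical twist.) In particular $\im\PPP_P$ contains $P\orth$ together with a shift of $P$, hence contains all of $\Omega$.

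It remains to establish full faithfulness. Since $\PPP_P$ has adjoints, the standard spanning-class criterion reduces this to checking that $\Hom^*(\omega_1,\omega_2)\to\Hom^*(\PPP_P\omega_1,\PPP_P\omega_2)$ is an isomorphism for $\omega_1,\omega_2\in\Omega$. The mixed terms $\Hom^*(P,F)$ and $\Hom^*(F,P)$ vanish on both sides by $P\orth=\lorth P$, and the terms with both arguments in $P\orth$ are isomorphisms because $\PPP_P$ is the identity there. The only substantial case is $\omega_1=\omega_2=P$, where one must show that the induced map $\End^*(P)\to\End^*(\PPP_P P)\cong\End^*(P)$ is an isomorphism of graded algebras. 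Granting this, $\PPP_P$ is fully faithful, so $\im\PPP_P$ is a right-admissible subcategory containing the spanning class $\Omega$; its right orthogonal therefore vanishes, $\PPP_P$ is essentially surjective, and hence an equivalence.

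The main obstacle is precisely this last case of full faithfulness: verifying that $\PPP_P$ induces an automorphism of $\End^*(P)=\kk[t]/t^{n+1}$. Unlike the vanishing and identity statements, this requires genuine functoriality of the double-cone construction---one must track how a class in $\Hom^*(P,P)$ transforms after passing through both cones---and it is here that the Calabi--Yau hypothesis is indispensable, as it controls the relevant Hom-spaces and the compatibility of the two cones. In the geometric situation of \cite{Huybrechts-Thomas} this bookkeeping is performed at the level of Fourier--Mukai kernels, by computing the convolution of the kernel of $\PPP_P$ with that of its adjoint and identifying it with the structure sheaf of the diagonal; in the general enhanced setting the same computation is carried out using the functorial cone and the adjunction unit.
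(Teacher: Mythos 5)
First, a point of comparison: the paper does not prove this proposition at all --- it is recalled from Huybrechts--Thomas \cite[Prop.~2.6]{Huybrechts-Thomas}, with the construction and uniqueness of $\PPP_P$ as a functor in the general dg-enhanced setting deferred to \cite{Anno-Logvinenko} and \cite{Anno-Logvinenko-Pn}. So what you have written is a reconstruction of the Huybrechts--Thomas argument, and its architecture is indeed theirs and is sound: the Calabi--Yau property gives $P\orth=\lorth P$, hence $\{P\}\cup P\orth$ is a spanning class; $\PPP_P$ acts trivially on $P\orth$; the telescope computation $\PPP_P(P)\cong P[2-(n+1)k]$ is correct (it specialises to $P[-2n]$ for $k=2$ and to the square of the spherical twist for $n=1$); and a fully faithful functor with a right adjoint whose image contains a spanning class is essentially surjective.

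However, there is a genuine gap, and you name it yourself: the claim that $\End^*(P)\to\End^*(\PPP_P P)$ is an isomorphism is introduced with ``granting this'' and is never established. This is not a peripheral verification --- it is precisely the hard content of \cite[Prop.~2.6]{Huybrechts-Thomas}: one must show that the functor does not kill the generator $t$, which requires tracking the evaluation maps through both cones, and in Huybrechts--Thomas this is exactly the part carried out at the level of Fourier--Mukai kernels. Your closing paragraph, which proposes instead to compute the convolution of the kernel of $\PPP_P$ with that of its adjoint and identify it with the diagonal, is not a completion of the spanning-class argument but a different and strictly stronger strategy: if one could do that, full faithfulness on all of $\cT$ would follow at once and the spanning class would be superfluous. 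Two smaller points in the same direction: in the general enhanced setting the existence of the right adjoint $\PPP_P^R$ is not automatic (it needs properness of $P$ together with a kernel-type construction, cf.\ \cite{Anno-Logvinenko-Pn}); and for the case $\omega_1,\omega_2\in P\orth$ of full faithfulness, pointwise isomorphisms $\PPP_P(F)\cong F$ do not suffice --- one needs that the natural map $F\to\PPP_P(F)$ from the defining triangle is an isomorphism for $F\in P\orth$, so that $\PPP_P|_{P\orth}$ is \emph{naturally} isomorphic to the identity; an exact functor can fix objects while acting nontrivially on morphisms.
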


\begin{remark}\label{rem:twist}
In the case of a spherical object, the $\IP^1$-twist associated to it is the square of the spherical twist; see \cite[Prop.~2.9]{Huybrechts-Thomas}.
\end{remark}

\subsection{Formality of single $\IP$-objects}
\label{sec:pn-single}

The following proposition is \autoref{main:pn-single} in the introduction.

\begin{proposition}
\label{prop:pn-single}
Let $P$ be a $\IP^n[k]$-like object in a cocomplete $\kk$-linear dg-enhanced triangulated category with $n,k$ positive integers. Then there are equivalences
\[
 \Genby P \cong \D(\End^*(P)) \quad \text{and}\quad \genby P \cong \D(\End^*(P))^c\,.
\]
\end{proposition}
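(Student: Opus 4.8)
The plan is to reduce the statement to the \emph{intrinsic formality} of the graded algebra $A \coloneqq \End^*(P) = \kk[t]/t^{n+1}$ with $\deg(t) = k$. Indeed, once $A$ is known to be intrinsically formal, \autoref{cor:intformal} yields both claimed equivalences $\Genby{P} \cong \D(A)$ and $\genby{P} \cong \D(A)^c$ at once; here the hypotheses of that corollary are met because $P$, being $\IP$-like, has a finite-dimensional graded endomorphism algebra and generates $\Genby{P}$, so it is a compact generator there. To prove intrinsic formality I would apply Kadeishvili's criterion \autoref{prop:intrinsically-formal}, which reduces the whole problem to the vanishing
\[
\HH^{p,\,2-p}(A,A) = 0 \qquad\text{for all } p > 2.
\]

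For the computation I would discard the Bar resolution in favour of the standard $2$-periodic minimal free resolution of $A$ over its enveloping algebra $A^e = A \otimes_\kk A\op \cong \kk[t_1,t_2]/(t_1^{n+1},t_2^{n+1})$, where $t_1 = t\otimes 1$ and $t_2 = 1\otimes t$. Writing
\[
u = t_1 - t_2, \qquad v = \sum_{i=0}^{n} t_1^{\,i} t_2^{\,n-i},
\]
one checks $uv = t_1^{n+1} - t_2^{n+1} = 0$, and
\[
\cdots \xto{v} A^e \xto{u} A^e \xto{v} A^e \xto{u} A^e \too A \too 0
\]
is a free resolution $P_\bullet \to A$, valid in every characteristic, since $(u)$ is the kernel of the multiplication $A^e \to A$ and $(v) = \mathrm{Ann}_{A^e}(u)$ (and vice versa). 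By \autoref{rem:HHres} it may be used in place of the Bar resolution. The only delicate point is the internal grading: to make all differentials homogeneous of degree $0$ one must place the free generator $e_p$ of the $p$-th term in internal degree
\[
\deg(e_{2i}) = i(n+1)k, \qquad \deg(e_{2i+1}) = \bigl(i(n+1)+1\bigr)k,
\]
reflecting that $u$ and $v$ are homogeneous of internal degree $k$ and $nk$.

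Applying $\Hom^\bullet_{A^e}(-,A)$ and using $\Hom^q_{A^e}(P_p, A) \cong A^{\deg(e_p)+q}$ (a homomorphism is determined by the image of $e_p$), the differentials restrict on the diagonal bimodule $A$ to $u^\ast = 0$ and $v^\ast = (n+1)t^n$. These are, however, irrelevant for us: to obtain $\HH^{p,2-p}(A,A)=0$ it suffices that the cochain group in homological position $p$ and internal degree $2-p$ already vanishes, namely
\[
\Hom^{2-p}_{A^e}(P_p, A) \cong A^{D_p}, \qquad D_p \coloneqq \deg(e_p) - p + 2.
\]
Since $A$ is supported in the internal degrees $\{0, k, 2k, \ldots, nk\}$, I would finish by checking $D_p > nk$ for every $p > 2$. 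A direct evaluation gives $D_3 = (n+2)k - 1$ and $D_4 = 2(n+1)k - 2$, both exceeding $nk$ for all $n,k \geq 1$, together with $D_{p+2} - D_p = (n+1)k - 2 \geq 0$; hence $D_p > nk$ for all $p>2$, so $A^{D_p} = 0$ and the desired vanishing follows.

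The formal part — the reduction through \autoref{cor:intformal} and \autoref{prop:intrinsically-formal} — is routine. I expect the main obstacle to be precisely the graded bookkeeping of the middle two paragraphs: writing down the periodic resolution with the correct internal shifts and then verifying that the antidiagonal degrees $D_p$ leave the finite window $[0,nk]$ supporting $A$. It is this degree estimate that encodes the dependence on $n$ and $k$, and it is worth noting that, with $\deg(t)=k$ built in, the estimate in fact goes through for all positive integers $n$ and $k$.
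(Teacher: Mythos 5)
Your proposal is correct and takes essentially the same route as the paper: the paper's proof reduces to intrinsic formality of $\kk[t]/t^{n+1}$ (\autoref{lem:Pformal}) via Kadeishvili's criterion, uses exactly the same graded $2$-periodic resolution by $u=t\otimes 1-1\otimes t$ and $v=\sum_i t^i\otimes t^{n-i}$ with the same internal shifts $i(n+1)k$ and $(i(n+1)+1)k$, and concludes by the same degree count showing the relevant cochain groups lie outside the support $[0,nk]$ of $A$. Your $D_p$ are precisely the paper's degrees $2-q+\deg(F^q)$, so the two arguments coincide.
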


\begin{proof}
By the definition of a $\IP$-like object, $\End^*(P)=\kk[t]/t^{n+1}$ with $\deg t=k$. Hence, the result follows by \autoref{thm:generatedcat}  together with the following lemma.
\end{proof}

\begin{lemma}\label{lem:Pformal}
For $n, k$ positive integers, the graded algebra $\kk[t]/t^{n+1}$ with $\deg t=k$ is intrinsically formal.
\end{lemma}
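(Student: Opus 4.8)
The plan is to verify Kadeishvili's criterion from \autoref{prop:intrinsically-formal}: writing $A = \kk[t]/t^{n+1}$ with $\deg t = k$, it suffices to show that the bigraded Hochschild cohomology $\HH^{p,2-p}(A,A)$ vanishes for every $p>2$. By \autoref{rem:HHres} I may compute this from \emph{any} projective resolution of $A$ in the category of graded $A^e$-modules, so the whole argument reduces to writing down one convenient resolution and doing degree bookkeeping.

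The key input is the classical $2$-periodic minimal free bimodule resolution of a truncated polynomial algebra. Putting $x = t\otimes 1$ and $y = 1\otimes t$, we have $A^e = \kk[x,y]/(x^{n+1},y^{n+1})$ with $\deg x = \deg y = k$, and the augmentation $A^e\to A$ is the multiplication $x,y\mapsto t$. Setting $v = x-y$ and $u = \sum_{i=0}^{n} x^i y^{n-i}$, one checks at once that $uv = vu = x^{n+1}-y^{n+1} = 0$ in $A^e$, so alternating (right) multiplication by $v$ and $u$ produces an infinite complex of free rank-one $A^e$-modules. I would record that this complex is exact — the standard periodic resolution of $A$ — and that it is minimal, since $u$ and $v$ lie in the augmentation ideal.

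The next step is the degree bookkeeping. Requiring each differential to be homogeneous of internal degree $0$ forces the free generator in homological degree $p$ into internal degree $g_p$, determined recursively by $g_{p+1} = g_p + \deg(v\text{ or }u)$; since each period raises the degree by $(n+1)k$, one gets $g_{2m} = m(n+1)k$ and $g_{2m+1} = m(n+1)k + k$, which grows linearly in $p$. Applying $\Hom_{A^e}^{q}(-,A)$ to a free generator of degree $g_p$ returns the single graded piece $A^{g_p+q}$, so the cochain group computing $\HH^{p,2-p}(A,A)$ is simply $A^{g_p+2-p}$. As $A$ is concentrated in degrees $0,k,\ldots,nk$, this group vanishes as soon as $g_p+2-p > nk$.

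Finally I would verify this inequality for all $p\ge 3$. Writing $p = 2m$ or $p = 2m+1$ (so $m\ge 1$) and using $k\ge 1$, a short computation gives $g_p + 2 - p - nk \ge (m-1)(n-1) + 1$ in both parities; since $(m-1)(n-1)\ge 0$ for $m,n\ge 1$, this is $\ge 1 > 0$, and the constant term is exactly what rescues the borderline case $n=k=1$. Hence the entire cochain group in bidegree $(p,2-p)$ already vanishes for $p>2$, a fortiori so does its cohomology, and \autoref{prop:intrinsically-formal} yields intrinsic formality. I expect the only genuinely delicate point to be fixing the shift/grading conventions so that the differentials are homogeneous and the generator degrees $g_p$ come out right; the exactness of the periodic resolution is standard and the closing numerical estimate is routine.
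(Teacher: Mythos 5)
Your proposal is correct and follows essentially the same route as the paper's proof: Kadeishvili's criterion via \autoref{prop:intrinsically-formal}, the graded $2$-periodic bimodule resolution of $\kk[t]/t^{n+1}$ with generator degrees $m(n+1)k$ and $m(n+1)k+k$, and the observation that the relevant graded pieces $A^{g_p+2-p}$ lie above the top degree $nk$ of $A$, hence vanish. The only differences are cosmetic: your $u$ and $v$ are swapped relative to the paper's naming, and your unified estimate $g_p+2-p-nk\ge (m-1)(n-1)+1$ packages in one inequality what the paper checks parity by parity.
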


\begin{proof}
In order to apply the criterion of \autoref{prop:intrinsically-formal} for intrinsic formality, we have to show the vanishing of the Hochschild cohomology groups $\HH^{q, 2-q}(A,A)$ for $q>2$. 

There is the well-known 2-periodic free resolution 
\[\cdots \to\uA^e \xto{v} \uA^e \xto{u} \uA^e \xto{v} \uA^e \xto{u} \uA^e \xto{m} \uA \to 0\] 
of the underlying non-graded algebra $\uA=\kk[t]/t^{n+1}$ considered as the diagonal bimodule over itself.
Here, $m$ is multiplication in $A$, $u$ is multiplication by $t \otimes 1 - 1 \otimes t$, and $v$ is multiplication by $t^n \otimes 1 + t^{n-1} \otimes t + \cdots + 1 \otimes t^n$; see \cite[Ex.~9.1.4]{Weibel}.
Now one can check easily that this becomes a graded free resolution 
\[
\cdots 
\xto{v} A^e\bigl(-(n+2)k\bigr) \xto{u} A^e\bigl(-(n+1)k\bigr) \xto{v} A^e\bigl(-k\bigr) \xto{u} A^e \xto{m} A \to 0\,.
\]
So we obtain a graded free resolution $F^\bullet$ of the $A^e$-module $A$ where 
\[
F^q=\begin{cases}
A^e\bigl(-i(n+1)k\bigr) & \text{for $q=2i$ even,}\\
A^e\bigl(-(i(n+1)+1)k\bigr) & \text{for $q=2i+1$ odd.}    
    \end{cases}
\]
By \autoref{rem:HHres}, $\HH^{q, 2-q}(A, A)$ is a subquotient of $\Hom_{A^e}^0(F^q, A(2-q))$ so it is sufficient to show that the latter vanishes for $q>2$. 
For $q=2i$ even,
\[
\Hom_{A^e}^0(F^q, A(2-q))=\Hom_{A^e}^0\bigl(A^e, A(2-2i+i(n+1)k)\bigr)=A^{2-2i+i(n+1)k}\,. 
\]
We have $2-2i+i(n+1)k=2+i(nk+k-2)>nk$ for $i\geq2$. But $A$ is concentrated in degrees between $0$ and $nk$, so we get \[\Hom_{A^e}^0(F^{2i}, A(2-2i))=A^{2-2i+i(n+1)k}=0\,.\] The verification that $\Hom_{A^e}^0(F^q, A(2-q))=0$ for $q>2$ odd is similar.  
\end{proof}

\begin{corollary}[{c.f.\ \cite[Prop.~4.2]{Segal}}]
\label{cor:pntwist-spherical-functor}
Let $P$ be a $\IP^n$-object in $\Db(X)$ and $B = \kk[t]$ where $t$ has degree $2$.
Then the functor $F \colon \Db(B) \to \Db(X), B \mapsto P$ is spherical and the spherical twist along $F$ is the $\IP$-twist along $P$.
\end{corollary}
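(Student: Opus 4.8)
The plan is to treat this as a direct application of Segal's construction in \cite[Prop.~4.2]{Segal}, whose sole non-automatic input is a formality hypothesis on the derived endomorphism algebra of $P$; the construction of the functor $F$, the proof that it is spherical, and the identification of its twist with the $\IP$-twist $\PPP_P$ are all carried out there. So the entire task reduces to checking that this formality hypothesis is automatically satisfied for a $\IP^n$-object.

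First I would make the hypothesis precise. Fix a dg-enhancement $\EE$ of $\Db(X)$ with equivalence $\Phi\colon \Ho^0(\EE)\isom \Db(X)$, choose $E\in \EE$ with $\Phi(E)\cong P$, and set $B_P \coloneqq \Hom^\bullet(E,E)$, the dg-algebra computing the derived endomorphisms of $P$. Since a $\IP^n$-object is a $\IP^n[2]$-object, its cohomology is
\[
\Ho^*(B_P) = \End^*(P) \cong \kk[t]/t^{n+1}, \qquad \deg t = 2.
\]
The functor $F\colon \Db(B)\to \Db(X)$ sending the free module $B=\kk[t]$ to $P$ is defined by equipping $P$ with the $\kk[t]$-module structure induced by the algebra map $\kk[t]\to \kk[t]/t^{n+1}$, $t\mapsto t$; to promote this to an actual dg-module structure on $E$ one needs $B_P$ to be quasi-isomorphic to $\kk[t]/t^{n+1}$. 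It is precisely this formality of $B_P$ that Segal assumes in order to conclude that $F$ is spherical with twist $\PPP_P$.

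It then remains to verify that $B_P$ is formal, and this is immediate from the work already done. By \autoref{lem:Pformal} applied with $k=2$, the graded algebra $\kk[t]/t^{n+1}$ with $\deg t = 2$ is intrinsically formal, meaning that every dg-algebra whose cohomology is this algebra is quasi-isomorphic to it. Applying this to $B_P$, whose cohomology is $\kk[t]/t^{n+1}$ by the previous paragraph, yields $B_P\simeq \kk[t]/t^{n+1}$, so $B_P$ is formal. Hence the hypothesis of \cite[Prop.~4.2]{Segal} is met and its conclusion gives the claim.

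The main point to be careful about—there is no computation to perform—is matching our notion of intrinsic formality of the graded algebra $\End^*(P)$ with the precise formality statement Segal requires for $\RHom(P,P)$, and confirming that the $B$-module structure used to build $F$ is the one induced by $\kk[t]\to \kk[t]/t^{n+1}$. The substantive mathematical content has already been supplied by \autoref{lem:Pformal}; this corollary simply removes the formality assumption from Segal's statement.
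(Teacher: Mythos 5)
Your overall strategy matches the paper's: quote Segal's \cite[Prop.~4.2]{Segal} and observe that its formality hypothesis is supplied by \autoref{lem:Pformal}. However, there is a genuine gap at exactly the point you wave off in your last paragraph as ``no computation to perform.'' Segal's hypothesis is \emph{not} that $\End^\bullet(P)$ is formal as a dg-algebra; it is that $\End^\bullet(P)$ is formal \emph{as a dg-module over} $B=\kk[t]$, where the $B$-module structure is defined by choosing a cocycle $u\in\End^\bullet(P)$ representing the degree-$2$ generator and letting $t$ act by multiplication by $u$. Intrinsic formality of the graded algebra $\kk[t]/t^{n+1}$ (your \autoref{lem:Pformal} step) only gives a zigzag of dg-\emph{algebra} quasi-isomorphisms between $\End^\bullet(P)$ and its cohomology, and such a zigzag carries no $B$-linearity whatsoever: the intermediate dg-algebras do not come with $B$-module structures, and there is no reason a priori that the chosen action by $u$ on one end can be matched with the action by $s$ on the other. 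So the sentence ``hence the hypothesis of \cite[Prop.~4.2]{Segal} is met'' does not follow from what you have established.

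Bridging this gap is precisely what the paper's proof spends most of its effort on. One replaces the zigzag by a roof $\End^\bullet(P)\xleftarrow{\,f\,}W^\bullet\xrightarrow{\,g\,}\kk[s]/s^{n+1}$ of dg-algebra quasi-isomorphisms in which $f$ is \emph{surjective} (take $f$ to be a cofibrant replacement in the model category of augmented dg-algebras, \cite[\S 4.2]{Keller-functorcats}). Surjectivity lets you lift the cocycle $u$ to some $v\in W^\bullet$; declaring that $t$ acts on $W^\bullet$ by multiplication by $v$ makes $f$ a quasi-isomorphism of $B$-modules, and since the cohomology class of $v$ is non-zero, $g(v)$ is a non-zero multiple of $s$, so $g$ is a $B$-module quasi-isomorphism as well. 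Only after this transfer of the $B$-module structure across the roof is Segal's hypothesis verified. Your proposal identifies the right ingredients but omits this argument, which is the actual mathematical content of the corollary beyond \autoref{lem:Pformal}.
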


\begin{proof}
This is proved in \cite[Prop.~4.2]{Segal} under the assumption that $\End^\bullet(P)$ is formal. By \autoref{lem:Pformal}, this assumption is always satisfied.

To be precise, Segal's assumption is that the dg-algebra $\End^\bullet(P)$ is formal as a dg-module over $B$, so we have to show that this is implied by its formality as a dg-algebra.  The $B$-module structure is given by choosing an isomorphism $\End^*(P)\cong \kk[s]/s^{n+1}$ and an element $u \in \End^\bullet(P)$ whose cohomology class is mapped to $s$ under this isomorphism. 
Then $t^l$ acts on $\End^\bullet(P)$ by multiplication by $u^l$. Now, we know that $\End^\bullet(P)$ is formal as a dg-algebra. Hence there is a roof
\[
\begin{tikzcd}[column sep=small]
& W^\bullet \ar[dl, "f"'] \ar[dr, "g"] \\
\End^\bullet(P) && \kk[s]/s^{n+1}
\end{tikzcd}
\] where $f$ and $g$ are quasi-isomorphisms of dg-algebras and $f$ is surjective. 
Indeed one can take $f\colon W^\bullet\to \End^\bullet(P)$ to be a cofibrant replacement with respect to the structure of a model category on the category of augmented dg-algebras as described in \cite[\S 4.2]{Keller-functorcats}.

Let $v\in W^\bullet$ be a preimage of $u$ under $f$. 
Then we can equip $W^\bullet$ with the structure of a $B$-algebra by letting $t$ act by $v$ so that $f$ becomes an quasi-isomorphism of $B$-modules. Furthermore, the cohomology class of $v$ is non-zero, hence $g(v)$ is a non-zero multiple of $s$. Therefore, $g$ is a quasi-isomorphism of $B$-modules, too.
\end{proof}

\section{Minimal resolutions of graded algebras}\label{sec:min}

In this section we describe a minimal resolution for certain graded algebras in terms of a tensor presentation, following Eilenberg \cite{Eilenberg} and  Butler and King \cite{Butler-King}. We use this for the computation of the Hochschild cohomology which leads to a sufficient condition for these algebras to be intrinsically formal.

\subsection{Separably augmented algebras and resolutions of diagonal bimodules}

We recall that a $\kk$-algebra $R$ is separable if and only if there is an element $p = \sum x_i \otimes y_i \in R^e$ (called \emph{separability idempotent}), such that $ap=pa$ for all $a \in R$ and $\sum x_iy_i=1$ in $R$.
For general facts on separable algebras see the textbook by Weibel \cite[\S 9.2]{Weibel}.

We denote by $\IN$ the semigroup of non-negative integers. 
A \emph{separably augmented $\kk$-algebra} is an $\IN$-graded $\kk$-algebra such that $R=A^0$ is a separable $\kk$-algebra. 

\begin{remark}\label{rem:Eilenbergalgebra}
Note that $A^+ = \bigoplus_{i>0} A^i$ is the \emph{homogeneous radical} of $A$, that is, the intersection of all homogeneous maximal ideals of $A$. Indeed, every homogeneous maximal ideal of $A$ is of the form $\fm=\fm_0\oplus A^+$. Hence, every separably augmented algebra satisfies the assumptions of \cite[\S 2]{Eilenberg}.
\end{remark}

For a separable $\kk$-algebra $R$ and an $R\hh R$-bimodule $V$,
we denote by $T(V) = R \oplus V \oplus (V \otimes_R V) \oplus \ldots$ its (free) tensor algebra over $R$.
Moreover let $J \subset T(V)$ be the two-sided ideal generated by $V$.
If $V$ carries an $\IN_+$-grading, where $\IN_+$ denotes the positive integers, the tensor algebra inherits a canonical $\IN$-grading given by 
\[
 \deg(v_1\otimes v_2\otimes \dots\otimes v_n)=\deg(v_1)+\deg(v_2)+\dots+\deg(v_n)\,.
\]
Then $T(V)^0=R$, so that $T(V)$ is a separably augmented algebra with $T(V)^+=J$.
 Conversely, every separably augmented algebra with $A^0=R$ has a graded surjection $T(V)\onto A$ for some $R\hh R$-bimodule $V$, for example $V=A^+$. 
Given a separably augmented algebra $A$ and a graded surjection $T(V)\onto A$ with kernel $I$, we call the induced  
isomorphism
$A \cong T(V)/I$ a \emph{tensor presentation} of $A$.
  
Replacing $V$ by $V/(V\cap I)$ if necessary, we may always choose a presentation such that $I \subset J^2$.
In the following, we will also assume that the inclusion $J^n \subset I$ holds for some $n\geq 2$. This is automatic if $A$ is of finite dimension over $\kk$, as it will be in the applications.

\begin{proposition}[{c.f.\ \cite[Prop.~2.4]{Butler-King}}]
\label{prop:minimal-resolution}
Let $A$ be a separably augmented algebra with $A^0=R$.
A minimal resolution $P^\bullet$ of $A$ as a graded $A^e$-module has terms 
$P^m  = A \otimes_R \Tor^A_m(R,R) \otimes_R A$.

Moreover, suppose that $A \cong T(V)/I$ is a tensor presentation with $J^n\subset I\subset J^2$ for some $n\ge 2$.
Then there are isomorphisms of graded $R$-algebras
\[
\Tor^A_{2p}(R,R) = \frac{I^p \cap J I^{p-1} J}{J I^p + I^p J} 
\quad \text{and} \quad
\Tor^A_{2p+1}(R,R) = \frac{J I^p \cap I^p J}{I^{p+1} + J I^p J}.
\]
where the grading on the left-hand side is induced by the grading on $A$ and the one on the right-hand side is induced by the grading on $T(V)$.
\end{proposition}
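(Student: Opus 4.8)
The plan is to reduce both assertions to one-sided homological algebra over the separable subalgebra $R=A^0$, using that separability of $R$ makes every $R^e$-module projective, so that the modules $A\otimes_R U\otimes_R A$ are genuinely $A^e$-projective. For the first assertion I would first observe that, since $A$ is $\IN$-graded with $A^0=R$, a minimal graded projective resolution $P^\bullet\to A$ of the $A^e$-module $A$ exists and is unique up to isomorphism. Building it by repeatedly lifting, via graded Nakayama, the graded top $\Omega^m/(A^+\Omega^m+\Omega^m A^+)$ of each syzygy $\Omega^m$ produces terms of the relatively projective shape $P^m=A\otimes_R U_m\otimes_R A$ for graded $R$-bimodules $U_m$. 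It then remains to identify $U_m$ with $\Tor^A_m(R,R)$.

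To make this identification I would apply $R\otimes_A(\blank)\otimes_A R$ to $P^\bullet$. Each $P^m$ is $A^e$-projective, hence flat as a right $A$-module, so $P^\bullet\otimes_A R$ is a resolution of the left $A$-module $R$ by the projective modules $A\otimes_R U_m$; consequently $\Tor^A_m(R,R)=\Ho_m(R\otimes_A P^\bullet\otimes_A R)$. On the other hand, minimality means each differential $P^m\to P^{m-1}$ has image in $A^+P^{m-1}+P^{m-1}A^+$, so it becomes zero after applying $R\otimes_A(\blank)\otimes_A R$; hence $\Ho_m(R\otimes_A P^\bullet\otimes_A R)=R\otimes_A P^m\otimes_A R=U_m$. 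Comparing the two computations yields $U_m\cong\Tor^A_m(R,R)$ as graded $R$-bimodules, which gives the asserted shape of $P^m$.

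For the explicit formulas I would compute $\Tor^A_\bullet(R,R)$ directly from the tensor presentation $A\cong T(V)/I$. The starting point is the length-one resolution $0\to T\otimes_R V\to T\to R\to 0$ of $R$ over the tensor algebra $T=T(V)$, coming from the identification $T\otimes_R V\cong J$. I would transport this along $T\onto A$ and assemble the minimal $A$-resolution of $R$ by splicing, realising each syzygy module as an $R$-sub-bimodule of the appropriate tensor power of $V$ inside $T$. The combinatorial heart is then to show, by induction alternating between even and odd homological degree, that the minimal generators at degree $2p$ are $U_{2p}\cong(I^p\cap JI^{p-1}J)/(JI^p+I^pJ)$ and at degree $2p+1$ are $U_{2p+1}\cong(JI^p\cap I^pJ)/(I^{p+1}+JI^pJ)$; the numerators arise as the kernels (syzygies) and the denominators as the parts removed when passing to minimal generators, i.e.\ the radical times the previous term.

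The main obstacle is exactly this inductive identification of the syzygies with the stated subquotients of $T(V)$: for each kernel one must prove both inclusions — that every syzygy lies in the asserted intersection, and conversely that every element of the intersection is a genuine syzygy — and then check that minimality strips off precisely the asserted sub-bimodule. This step is where the exactness of the tensor-algebra resolution and a careful analysis of the differentials enter, following Eilenberg and Butler--King. Finally, since the multiplication $T\otimes_R T\to T$, the inclusions $I\hookrightarrow T$ and $J\hookrightarrow T$, and all comparison maps are homogeneous for the $\IN$-grading on $T(V)$, every identification above is an isomorphism of graded $R$-bimodules; compatibility with the products inherited from the algebra structure of $\Tor^A_\bullet(R,R)$ and from multiplication in $T(V)$ then upgrades them to isomorphisms of graded $R$-algebras. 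Verifying that this graded bookkeeping goes through is the only substantive point where the argument must go beyond the ungraded statement of Butler--King.
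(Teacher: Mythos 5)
Your proposal is correct in outline and, at bottom, follows the same route as the paper: both treat the statement as Butler--King's theorem \cite[Prop.~2.4]{Butler-King} and concentrate on making that argument work for the non-standard grading. Your first half --- existence of a minimal graded resolution with relatively projective terms $A\otimes_R U_m\otimes_R A$ (separability of $R$ making these $A^e$-projective, graded Nakayama giving the surjections onto syzygies), followed by the identification $U_m\cong\Tor^A_m(R,R)$ obtained by applying $R\otimes_A(\blank)\otimes_A R$ and using that minimality kills the induced differentials --- is precisely the Eilenberg-style argument the paper appeals to, and it is sound. Where you diverge is in the computation of the Tor groups. The paper, following Butler--King and Bongartz \cite{Bongartz}, obtains the intersection formulas from an explicit one-sided resolution of $R$ whose terms are the subquotients $I^p/I^{p+1}$ and $JI^{p-1}/JI^{p}$ of $T(V)$ and whose differentials are induced by the inclusions of the homogeneous ideals $I$ and $J$; one then tensors with $R$ over $A$ and computes homology. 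This choice does two things at once: exactness and the homology computation are Bongartz's lemma, and the graded refinement is immediate because every differential is an inclusion of homogeneous ideals, hence homogeneous of degree zero. You instead propose to build the minimal $A$-resolution of $R$ by splicing syzygies realised inside tensor powers of $V$ and to identify the minimal generators inductively; but that inductive identification is exactly the combinatorial core of Butler--King's proof, which you yourself flag as the ``main obstacle'' and defer back to \cite{Eilenberg} and \cite{Butler-King} rather than carry out. Since the paper's own proof defers the same step to the same sources --- its genuine content being the observation that those arguments are already graded (Eilenberg works with separably augmented graded algebras throughout, and graded projectivity of a module agrees with projectivity of the underlying ungraded module) --- I would not call this a gap; but note that the detour through Bongartz's resolution is what makes both the exactness and the graded bookkeeping transparent, whereas on your route the exactness of the spliced complex is an additional unproved claim sitting exactly at the point you identify as hardest.
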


\begin{proof}
This follows by setting $L=T(V)^e$ in \cite[Prop.~2.4]{Butler-King}. 
Unfortunately, Butler and King assume that the grading of $A$ is induced by the natural grading of $T(V)$ (i.e.\ the elements of $V$ have degree $1$), which will never be the case in our applications.

However, one can check that every step of the proof of \cite[Prop.~2.4]{Butler-King} works in our general graded set-up. 
Indeed, the proof of the equality $P^m  = A \otimes_R \Tor^A_m(R,R) \otimes_R A$ mainly refers to Eilenberg \cite{Eilenberg}, who works in the general graded setting of separably augmented algebras throughout; compare \autoref{rem:Eilenbergalgebra}. 
The arguments in \cite{Butler-King} needed for this equality can all be turned into arguments that also work in our graded set-up using the fact that an object in the category of graded $A$-modules is projective if and only if the underlying non-graded $\underline A$-module is projective; see, for example, \cite[\S 1]{Eilenberg}. 

The computation of $\Tor^A_m(R,R)$ in terms of the ideals $I$ and $J$ is done using the projective resolution 
\[
\cdots \to \frac{JI^n}{JI^{n+1}} \to \frac{I^n}{I^{n+1}} \to \frac{JI^{n-1}}{JI^n} \to \cdots \to \frac{JI}{I} \to A  \to R \to 0\,;
\]
see also \cite{Bongartz} for details. Its differentials are induced by the inclusions of the homogeneous ideals $I$ and $J$, hence are graded homomorphisms.
\end{proof}

\begin{remark}
\label{rem:minimal-resolution-simplification}
Let $P^\bullet \to A$ be the minimal resolution of $A$ as in \autoref{prop:minimal-resolution}.
Note that there is a natural isomorphism 
\[\Hom_{A^e}(P^q, \blank ) = \Hom_{R^e}(\Tor^A_q(R,R),\blank)\,.\] 
\end{remark}

\subsection{Degree criterion for intrinsic formality}

\begin{definition}
For a graded module $M$ we define the \emph{maximal degree} of $M$ as
\[
\maxdeg(M) \coloneqq \max\{\deg(m) \mid \text{non-zero homogeneous } m \in M \}
\]
and analogously the \emph{minimal degree} $\mindeg(M)$.
\end{definition}

\begin{proposition}
\label{prop:max-min-degree}
Let $A$ be a separably augmented algebra and let $q\in \IN$.
If 
\[
\maxdeg(A)+q-2 < \mindeg(\Tor_q^A(R,R))
\]
then the Hochschild cohomology $\HH^{q,2-q}(A,A)$ vanishes.
In particular, if this inequality holds for all $q>2$, then $A$ is intrinsically formal.
\end{proposition}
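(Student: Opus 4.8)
The plan is to reduce the vanishing of $\HH^{q,2-q}(A,A)$ to a pure degree count on a single term of the minimal resolution. First I would invoke \autoref{rem:HHres} to replace the Bar resolution by the minimal resolution $P^\bullet$ of \autoref{prop:minimal-resolution}, which gives
\[
\HH^{q,2-q}(A,A) = \Ho^q\bigl(\Hom_{A^e}^0(P^\bullet, A(2-q))\bigr).
\]
Since the cohomology at position $q$ is a subquotient of the term $\Hom_{A^e}^0(P^q, A(2-q))$, it suffices to show that this single Hom-group vanishes under the stated inequality.

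Next I would simplify this group. By \autoref{rem:minimal-resolution-simplification}, the adjunction $\Hom_{A^e}(P^q,\blank) = \Hom_{R^e}(\Tor_q^A(R,R),\blank)$ identifies the term (degree by degree) with $\Hom_{R^e}^0(\Tor_q^A(R,R), A(2-q))$. This is the crucial reduction: it trades the large $A^e$-module $P^q$ for the typically much smaller, degree-concentrated $R^e$-module $\Tor_q^A(R,R)$, while replacing the target $A$ by its shift $A(2-q)$.

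The heart of the argument is then a one-line degree count. A degree-zero homomorphism $\phi$ of graded $R^e$-modules sends a homogeneous element of degree $d$ in $\Tor_q^A(R,R)$ to an element of degree $d$ in $A(2-q)$, that is, to an element of degree $d+2-q$ in $A$, using the convention $M(i)^d = M^{d+i}$. For $\phi$ to be non-zero there must exist a degree $d$ with both $\Tor_q^A(R,R)^d\neq 0$ and $A^{d+2-q}\neq 0$; the former forces $d \ge \mindeg(\Tor_q^A(R,R))$ and the latter forces $d+2-q \le \maxdeg(A)$, i.e.\ $d \le \maxdeg(A)+q-2$. The hypothesis $\maxdeg(A)+q-2 < \mindeg(\Tor_q^A(R,R))$ makes these two conditions incompatible, so no non-zero $\phi$ exists and the Hom-group vanishes. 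This yields $\HH^{q,2-q}(A,A)=0$, and the final claim follows by feeding the hypothesis for all $q>2$ into Kadeishvili's criterion, \autoref{prop:intrinsically-formal}.

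I do not expect a genuine obstacle here: the whole proposition is a bookkeeping consequence of the minimal-resolution description and a degree comparison. The only points requiring care are the grading conventions — tracking the shift $A(2-q)$ so that a degree-zero map lands in $A^{d+2-q}$ rather than $A^{d-2+q}$ — and the harmless edge case $\Tor_q^A(R,R)=0$, in which $\mindeg$ is read as $+\infty$ and the vanishing is automatic.
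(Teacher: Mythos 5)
Your proposal is correct and takes essentially the same route as the paper's own proof: reduce $\HH^{q,2-q}(A,A)$ via \autoref{rem:HHres} and \autoref{rem:minimal-resolution-simplification} to a subquotient of $\Hom_{R^e}^0\bigl(\Tor_q^A(R,R), A\ds{2-q}\bigr)$, kill that Hom-space by comparing $\mindeg$ of the source with the (shifted) $\maxdeg$ of the target, and conclude with Kadeishvili's criterion (\autoref{prop:intrinsically-formal}). Your explicit tracking of the shift convention just spells out the paper's one-line degree argument in more detail.
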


\begin{proof}
By \autoref{rem:HHres} and \ref{rem:minimal-resolution-simplification}, 
the Hochschild cohomology $\HH^{q,2-q}(A,A)$ is a subquotient of
\[
\Hom_{R^e}( \Tor_q^A(R,R), A\ds{2-q})\,. 
\]
Hence, it is sufficient to show the vanishing of this $\Hom$-space.
But there cannot be any non-zero homomorphism of degree zero, since the minimal degree of the source is smaller than the maximal degree of the target. 

Recall that $\HH^{q,2-q}(A,A)=0$ for $q>2$ implies intrinsic formality of $A$ by \autoref{prop:intrinsically-formal}. 
\end{proof}

We will use \autoref{prop:max-min-degree} to prove intrinsic formality of a given separably augmented algebra using a suitable tensor representation, namely in the case of endomorphism algebras of configurations of $\IP$-objects.  

\begin{remark}
\label{rem:mindeg}
Let $A$ be an $\IN$-graded $\kk$-algebra $P$ a graded $A$-module, and $M,N \subset P$ graded submodules.
Then the following rules hold:
\begin{itemize}
\item $\mindeg(M+N) = \min \{ \mindeg(M),\mindeg(N) \}$; hence $\mindeg(M) = \min\{ \deg(m_i) \}$ for $M = {}_A\genby{m_1,\ldots,m_l}$ with $m_i$ homogeneous;
\item $\mindeg(M\cap N) \geq \max\{ \mindeg(M),\mindeg(N) \}$;
\item $\mindeg(P/M) \geq \mindeg(P)$, with equality if $\mindeg(M) > \mindeg(P)$.
\end{itemize}
If $I,J \subset A$ are ideals then we have additionally:
\begin{itemize}
\item $\mindeg(I\cdot J) \ge \mindeg(I) + \mindeg(J)$.
\end{itemize}
Let $A$ be a separably augmented algebra with tensor representation $A\cong T(V)/I$ and let $J=A^+$ so that $\mindeg(J)=\mindeg(V)$. 
By \autoref{prop:minimal-resolution} and the above rules, we get 
\[
\begin{split}
\mindeg\Tor^A_{2p}(R,R) & \geq \mindeg(I^p \cap J I^{p-1} J) \\ & \geq
                           \max\{p\mindeg(I), 2\mindeg(J)+(p-1)\mindeg(I)\}\,,\\
\mindeg\Tor^A_{2p+1}(R,R) & \geq \mindeg(J I^p \cap I^p J) \geq p\mindeg(I)+\mindeg(J)\,.
\end{split}
\]
\end{remark}

\section{Configurations of $\IP$-objects}
\label{sec:pobjects}

\begin{definition}
\label{def:tree}
Let $\cT$ a triangulated category and let $Q$ be a graph. Our convention for a graph is that we allow at most one edge joining two given vertices $i\neq j$ and no edge from a vertex to itself. 
A \emph{$Q$-configuration} of objects in $\cT$ is  
a collection of indecomposable objects $P_i$, one for every vertex $i$ of $Q$, such that, for all $i\neq j$, we have $\Hom^*(P_i,P_j)= \Hom^*(P_j,P_i)=0$ if $i$ and $j$ are not adjacent and 
\[
 \dim_{\kk} \Hom^*(P_i,P_j)= \dim_{\kk} \Hom^*(P_j,P_i)=1 
\]
if $i$ and $j$ are connected by an edge.
A \emph{tree} is a graph in the sense above without loops. 
Given a $Q$-configuration $\{P_i\}$ for a tree $Q$ we also say that the objects $P_i$ \emph{form the tree $Q$}.
\end{definition}

\begin{remark}
\label{rem:tensorpres}
Let $n,k$ be positive integers and let $P_1,\ldots,P_m$ be a $Q$-configuration of $\IP^n[k]$-like objects such that $\Hom^{\leq 0}(P_i,P_j)$ is zero for $i\neq j$.
Then $A = \bigoplus_j \End^j(\bigoplus_i P_i)$ is a separably augmented algebra,
since $R = A^0 = {}_{\kk}\genby{e_1,\ldots,e_m}$ is spanned by the mutually orthogonal idempotents $e_i \coloneqq \id_{P_i}$.

For each $i$, denote $t_i$ a non-zero map in $\Ext^k(P_i,P_i)$, which is unique up to multiplication with a unit. 
By assumption, for any two $P_i$ and $P_j$ adjacent in $Q$, there is a unique positive degree $h_{ij}$ such that $\Ext^{h_{ij}}(P_i,P_j) = \kk \cdot a_{ij}$.
Let $V \subset \End^*(\bigoplus_{i=1}^m P_i)$ be the graded $\kk$-subvector space spanned by all $t_i$ and $a_{ij}$.
This gives a graded surjection $T(V) \onto A$, hence a tensor presentation $A = T(V)/I$ by some homogeneous ideal $I$.
\end{remark}

\subsection{Formality of configurations of $\IP$-objects}

\begin{proposition}
\label{prop:pn-moregeneral}
Let $Q$ be a graph and let 
 $P_1,\ldots,P_m$ be a $Q$-configuration consisting of $\IP^n[k]$-like objects in a $\kk$-linear triangulated category with $n,k$ integers with $n,k\geq2$. 
Assume that there exists an integer $h$ with $\frac{nk}2\le h\le nk$ and $\gcd(k,h)>1$ such that 
\[
\Hom^*(P_i,P_j) = \kk[-h] \quad \text{for all adjacent $P_i$ and $P_j$.}
\]
Then $A = \End^*(\bigoplus_{i=1}^m P_i)$ is intrinsically formal.
\end{proposition}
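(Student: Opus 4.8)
The plan is to establish intrinsic formality through the Hochschild-cohomology criterion, i.e.\ to show $\HH^{q,2-q}(A,A)=0$ for every $q>2$; but I expect the plain degree estimate of \autoref{prop:max-min-degree} to be too weak on its own, so the argument will be split according to the residue of $q$ modulo $g\coloneqq\gcd(k,h)$, and the hypothesis $\gcd(k,h)>1$ will do the real work in the delicate range. To set up, note that $h>0$ and $\End^{<0}(P_i)=0$ give $\Hom^{\le0}(P_i,P_j)=0$ for $i\ne j$, so \autoref{rem:tensorpres} supplies a tensor presentation $A=T(V)/I$ with $V$ spanned by the loops $t_i$ (degree $k$) and the arrows $a_{ij}$ (degree $h$). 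I would then record three elementary facts: $\maxdeg(A)=nk$ (attained by $t_i^{\,n}$, and not exceeded since $h\le nk$); $\mindeg(J)=k$ (as $h\ge nk/2\ge k$); and, crucially, every homogeneous component of $A$ sits in a degree divisible by $g$, because the degrees occurring in $A$ are the multiples $0,k,\dots,nk$ together with $h$. Finally, as in the proof of \autoref{prop:max-min-degree}, \autoref{rem:HHres} and \autoref{rem:minimal-resolution-simplification} exhibit $\HH^{q,2-q}(A,A)$ as a subquotient of $\Hom_{R^e}(\Tor^A_q(R,R),A\ds{2-q})$.

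The first step disposes of most exponents by pure bookkeeping. Every monomial of $T(V)$ has degree $\alpha k+\beta h\equiv0\pmod g$, so $\Tor^A_q(R,R)$ is concentrated in degrees divisible by $g$. A degree-zero $R^e$-homomorphism into $A\ds{2-q}$ maps a class of degree $d$ into the component $A^{d+2-q}$, which can be non-zero only if $d+2-q$ is a degree occurring in $A$, hence divisible by $g$; since $d\equiv0\pmod g$ this forces $q\equiv2\pmod g$. Consequently $\Hom_{R^e}(\Tor^A_q(R,R),A\ds{2-q})=0$ whenever $q\not\equiv2\pmod g$, and $\HH^{q,2-q}(A,A)$ vanishes for all such $q$. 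This is precisely the range — most visibly $q=3$ — in which the bare degree inequality cannot hold.

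For the surviving exponents, those with $q>2$ and $q\equiv2\pmod g$, I would return to \autoref{prop:max-min-degree}. The key input is $\mindeg(I)=h+k$: the lowest relations are $t_j\circ a_{ij}$ and $a_{ij}\circ t_i$, which vanish because $\Hom^*(P_i,P_j)$ lives only in degree $h$, while below degree $h+k$ the only monomials are the non-vanishing powers $t_i^{\,c}$ with $c\le n$ (this is where $n\ge2$ enters) and the single arrows, so no relation can occur. Plugging $\mindeg(I)=h+k$ and $\mindeg(J)=k$ into \autoref{rem:mindeg} yields $\mindeg\Tor^A_{2p}(R,R)\ge p(h+k)$ and $\mindeg\Tor^A_{2p+1}(R,R)\ge p(h+k)+k$. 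Since $3\not\equiv2\pmod g$, every relevant $q$ obeys $q\ge g+2\ge4$, hence has $p\ge2$ in either parity; the required inequality $\maxdeg(A)+q-2<\mindeg(\Tor^A_q(R,R))$ then reduces to $p(h+k-2)>nk-2$ in the even case and to $p(h+k-2)>(n-1)k-1$ in the odd case, both of which hold for $p\ge2$ by $2h\ge nk$ and $k\ge2$. Thus \autoref{prop:max-min-degree} finishes these exponents as well.

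The crux, and the reason for both hypotheses, is this interplay. The estimate on $\mindeg(\Tor^A_q)$ by itself genuinely fails for small $q$: the class of $t_i\circ a_{ij}\circ t_j$ shows $\mindeg\Tor^A_3(R,R)\le 2k+h$, which is $\le\maxdeg(A)+1$ as soon as $h$ is close to $nk/2$, so $q=3$ cannot be treated by degrees alone. The condition $\gcd(k,h)>1$ is exactly what removes these low obstructions through the grading argument, while $h\ge nk/2$ is what makes the genuine degree inequality valid from $p=2$ onwards. One boundary case deserves a remark: when $h=nk/2$ the composite $a_{ij}\circ a_{ji}$ need not vanish and $A$ is no longer monomial, but the resulting relation then lies in degree $nk\ge h+k$, so it alters neither $\mindeg(I)$ nor the divisibility statement, and the argument goes through unchanged.
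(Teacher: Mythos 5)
Your proof is correct and follows essentially the same route as the paper's: the tensor presentation of \autoref{rem:tensorpres}, the Butler--King estimates $\mindeg(I)=h+k$, $\mindeg(J)=k$ fed into \autoref{prop:max-min-degree} for the large exponents, and a divisibility argument from $\gcd(k,h)>1$ for the residual ones. The only cosmetic difference is in how the cases are split: the paper runs the degree estimate for \emph{all} $q\ge 4$ and invokes divisibility (via the Bar resolution, $B^3=A^{\otimes 5}$ mapping to $A\ds{-1}$) only for $q=3$, whereas you apply the congruence argument to all $q\not\equiv 2\pmod{g}$ using the minimal resolution and reserve the degree estimate for $q\equiv 2\pmod{g}$ --- the substance is identical.
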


\begin{proof}
We will use the tensor presentation of $A = T(V)/I$ as in \autoref{rem:tensorpres} together with the notation $e_i = \id_{P_i}$, $\End^k(P_i) = \kk \cdot t_i$ and $\Ext^h(P_i,P_j) = \kk \cdot a_{ij}$ for adjacent $P_i$ and $P_j$. Here $V$ is the graded vector space spanned by all $t_i$ and $a_{ij}$. Recall that $J = T(V)^+$. 

Note that the homogeneous elements $1, t_i, \ldots, t_i^n, a_{ij}$ constitute a basis of $A$ as a $\kk$-vector space. Hence, 
\begin{equation}
\tag{$A$}
\label{eq:maxdegA}
\maxdeg(A) = nk \le 2h\,.
\end{equation}

The only elements of $I$ not involving an $a_{ij}$ lie in the ideal generated by the elements $t_i^{n+1}$. 
Indeed $t_i^l\neq 0$ for $l<n+1$ by the definition of a $\IP^n[k]$-object. Furthermore, for $i\neq j$, the tensor product $t_i\otimes t_j$ 
already vanishes as an element of $V\otimes_R V\subset T(V)$ as $t_it_j = t_i\otimes t_j=(t_i e_i)\otimes t_j=t_i\otimes (e_i t_j)=0$. 
Hence, the minimal degrees of $I$ and $J$ are
\begin{align*}
\mindeg(I) \geq &\ \min\{ \deg(t_i^{n+1}), \deg(a_{ij}a_{jl}), \deg(a_{ij}t_j) \}  = h+k\,, \\
\mindeg(J) = &\   \min\{ \deg(t_i), \deg(a_{ij}) \} = k\,,
\end{align*}
where the assumption that $n \geq 2$, hence $h\ge k$, is used.
Hence, by \autoref{rem:mindeg}, we get
\begin{align}
\tag{e}
\label{eq:even}
\mindeg \Tor^A_{2p}(R,R) \geq &\  \max\{ p(h\!+\!k),  2k + (p\!-\!1)(h\!+\!k) \}=p(h+k),\\
\tag{o}
\label{eq:odd}
\mindeg \Tor^A_{2p+1}(R,R) \geq &\  p(h+k)+k\,.
\end{align}

We can now confirm that the assumptions of \autoref{prop:max-min-degree} are satisfied for $q\ge 4$. For $q=2p$ with $p\ge 2$, we have
\[
\maxdeg(A)+q-2 \overset{\eqref{eq:maxdegA}}{\le} 2h+2p-2\overset{(2\le p)}< ph+2p \overset{(2\le k)}\le ph+pk \overset{\eqref{eq:even}} \leq \mindeg\Tor_{2p}^A(R,R)\,.
\]
Similarly, for $q=2p+1$ with $p\ge 2$,
\[
\maxdeg(A)+q-2 \overset{\eqref{eq:maxdegA}} = 2h+2p-1\overset{(2\le p)} <
 ph+2p \underset{(2\le k)} < ph+pk + k \underset{\eqref{eq:odd}} \leq \mindeg\Tor_{2p+1}^A(R,R)\,.
\]

Hence, $\HH^{q,2-q}(A,A)$ vanishes for $q\ge 4$. In order to apply Kadeishvili's criterion for intrinsic formality (see \autoref{prop:intrinsically-formal}), all that is left is to show that $\HH^{3,-1}(A,A)=0$. 
This can be done using the Bar resolution $B^\bullet$; see \autoref{sec:hochschild}. Indeed, $A$ is concentrated in degrees divisible by $\gcd(k,\frac{nk}2)>1$.
Hence, the same holds for $B^3=A^{\otimes 5}$. 
Thus, there is no non-trivial degree-zero homomorphism $A^{\otimes 5}\to A(-1)$.   
\end{proof}

The previous proposition does not cover the interesting case of configurations of spherical objects, which we treat in the following proposition.

\begin{proposition}
\label{prop:spherical-moregeneral}
Let $Q$ be a graph and let 
 $P_1,\ldots,P_m$ be a $Q$-configuration consisting of $k$-spherelike objects in a $\kk$-linear triangulated category with $k\geq4$.
Moreover, assume that, for adjacent $P_i$ and $P_j$,
\[
\Hom^*(P_i,P_j) = \kk[-h_{ij}] \quad\text{with } \abrunden{\frac k2}\le h_{ij}\le k \,.
\]
Then $A = \End^*(\bigoplus_{i=1}^m P_i)$ is intrinsically formal.
\end{proposition}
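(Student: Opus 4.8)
The plan is to follow the proof of \autoref{prop:pn-moregeneral} verbatim as far as possible, using the tensor presentation $A = T(V)/I$ from \autoref{rem:tensorpres} and feeding degree estimates into \autoref{prop:max-min-degree}. Write $e_i = \id_{P_i}$, $\End^k(P_i) = \kk\cdot t_i$ (now with $t_i^2 = 0$, as these are $\IP^1[k]$-like objects) and $\Ext^{h_{ij}}(P_i,P_j) = \kk\cdot a_{ij}$ for adjacent $P_i,P_j$, so that $V$ is spanned by the $t_i$ and $a_{ij}$ and $J = T(V)^+$. Since $\{1, t_i, a_{ij}\}$ is a $\kk$-basis of $A$ and $h_{ij}\le k$, I first record
\[
\maxdeg(A) = k \qquad\text{and}\qquad \mindeg(J) = \min\{k, h_{ij}\} \ge \abrunden{k/2}.
\]
Every generator of $I$ is a product of at least two elements of $V$ --- the relations $t_i^2$, $t_i a_{ij}$, $a_{ij}t_j$, $a_{ij}a_{jl}$, and, where $h_{ij}+h_{ji}=k$, the relation writing $a_{ij}a_{ji}$ as a multiple of $t_i$ --- so $I\subseteq J^2$ and hence $\mindeg(I)\ge 2\abrunden{k/2}$. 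Together with \autoref{rem:mindeg} this yields the uniform bound $\mindeg\Tor^A_q(R,R)\ge q\abrunden{k/2}$ for all $q$.

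By \autoref{prop:max-min-degree} it suffices to have $\maxdeg(A)+q-2 < \mindeg\Tor_q^A(R,R)$ for every $q>2$, which the bounds above reduce to the single inequality
\[
k+q-2 < q\abrunden{k/2}, \qquad\text{equivalently}\qquad q > \frac{k-2}{\abrunden{k/2}-1}.
\]
For $k$ even the right-hand side equals $2$, so the inequality holds for all $q>2$ and intrinsic formality is immediate. For $k$ odd it equals $2+\frac{2}{k-3}$, which is $<3$ exactly when $k\ge 7$; hence for odd $k\ge 7$ the criterion again covers all $q>2$. The one gap is $k=5$, $q=3$, where both sides equal $6$. This is not an artefact of the crude estimate: an edge or a length-two path carrying $h=2$ produces a degree-$4$ relation, so $\mindeg(I)=4$ and $\mindeg(J)=2$ genuinely occur.

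The whole statement therefore reduces to proving $\HH^{3,-1}(A,A)=0$ when $k=5$, and this is the step I expect to be the real obstacle. I would compute it directly from the Bar resolution of \autoref{def:bar}: by \autoref{rem:HHres}, $\HH^{3,-1}(A,A)$ is the degree-$3$ cohomology of the complex with terms $C^p := \Hom_{A^e}^0(B^p, A(-1))$, and $B^3 = A^{\otimes 5}$ gives $C^3 = \Hom_\kk^{-1}(A^{\otimes 3}, A)$. Passing to the complex normalized over $R = A^0$ (replacing $A$ by $A^+$ and $\otimes_\kk$ by $\otimes_R$), a degree count shows $C^4 = 0$, since $(A^+)^{\otimes_R 4}$ has minimal degree $8$, larger than $\maxdeg(A)+1=6$, while the surviving part of $C^3$ comes only from the degree-$6$ summand $(A^+)^{\otimes_R 3}$, necessarily of shape $2+2+2$, mapping to $\bigoplus_i\End^5(P_i)$, the span of the $t_i$. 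Hence $\HH^{3,-1}(A,A)=\coker\bigl(\delta\colon C^2\to C^3\bigr)$.

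The key observation is that these basis tensors of $(A^+)^{\otimes_R 3}$ are exactly the closed length-three walks $i\to j\to l\to i$ in $Q$ built from degree-$2$ edges. When $Q$ is bipartite --- in particular when it is a tree, the case needed for \autoref{main:pn-many} --- no such walk exists, so $C^3 = 0$ and $\HH^{3,-1}(A,A)=0$ for free. For a general graph $Q$ one must instead show that the differential $\delta\colon C^2\to C^3$ is surjective; I expect this to follow from a finite, configuration-local computation with the multiplication of $A$ on its degree-$2$ and degree-$3$ pieces, via the coboundary terms $a\cdot f(b\otimes c)$ and $f(a\otimes b)\cdot c$. The delicate point, and the reason the clean divisibility argument of \autoref{prop:pn-moregeneral} is unavailable, is precisely that for $k=5$ the degrees $\{0,2,3,5\}$ appearing in $A$ need share no common factor, so one cannot avoid analysing the differential itself; this surjectivity is where I expect the genuine work to lie.
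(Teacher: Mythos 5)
Your route is exactly the paper's route---the tensor presentation of \autoref{rem:tensorpres}, the bounds $\mindeg(I)\ge 2h$, $\mindeg(J)\ge h$ with $h=\abrunden{\tfrac k2}$, hence $\mindeg\Tor^A_q(R,R)\ge qh$, fed into \autoref{prop:max-min-degree}---but your arithmetic is the more careful one. The paper settles the odd case $q=2p+1$ by asserting that $2h+2p<2ph+h$, equivalently $\frac{h}{h-1}<2p$, ``holds due to $p\ge 1$''; for $h=2$, $p=1$ this reads $6<6$ and is false. So the published proof breaks at $q=3$ precisely where you say it does, namely for $k\in\{4,5\}$: at $k=4$ the conclusion still follows from the same estimates (your computation $k+1=5<6=3h$; the paper only loses too much in the bound $k\le 2h+1$), whereas at $k=5$ the degree criterion genuinely fails, as your degree-$4$ elements $a_{ij}\otimes a_{ji}\in I$ show. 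Your normalized-bar-complex argument is not in the paper at all; it is correct for trees and suffices for \autoref{main:pn-many}. One caveat: your identification of the surviving part of $C^3$ with \emph{closed} walks presumes $A^5=\bigoplus_i\kk t_i$, which fails if some $h_{ij}=5$; for a tree this is harmless (a length-$3$ walk of degree-$2$ edges between adjacent vertices must backtrack, forcing $h_{im}=2$), but for general bipartite $Q$ it is not, see below.

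The case you leave open cannot be closed, because the statement is false there. Take $k=5$, $Q$ a triangle, and all six $h_{ij}=2$ (allowed by the hypotheses). Then every product of two elements of $A^+$ vanishes for degree reasons ($2+2=4\notin\{2,5\}$ and $2+5,\,5+5>5$), so $A=R\oplus A^+$ with zero multiplication on $A^+$; this configuration is realised by the $5$-spherelike objects $e_iA$ in $\D(A)$, whose graded endomorphism algebra is $A$. Now $(A^+)^{\otimes_R 2}$ lives in degrees $\{4,7,10\}$ while $A(-1)$ lives in degrees $\{1,3,6\}$, so $C^2=0$; on the other hand $C^3\neq0$ (the closed degree-$2$ walks around the triangle) and $C^4=0$. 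Hence $\HH^{3,-1}(A,A)=C^3\neq0$, and the map $\delta\colon C^2\to C^3$ whose surjectivity you hope to establish is the zero map out of the zero space. This is moreover a genuine failure of intrinsic formality, not merely of Kadeishvili's sufficient criterion: the cochain $m_3(a_{ij}\otimes a_{jl}\otimes a_{li})=t_i$ satisfies the $A_\infty$-relations (every composition occurring in them vanishes by the same degree count, and inputs from $R$ are absorbed by $R$-bilinearity), and since its class in $\HH^{3,-1}(A,A)$ is nonzero, the minimal $A_\infty$-algebra $(A,m_2,m_3)$---equivalently, any dg-algebra model of it---has cohomology $A$ but is not formal. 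Note also that a bipartite $4$-cycle with three directed degree-$2$ edges closing up against a degree-$5$ Hom on the fourth edge gives a counterexample in the same way, which is why ``bipartite'' cannot replace ``tree'' in your fix. In summary: your proof is correct and complete exactly on the locus where the proposition is true (all $Q$ for even $k$ and for odd $k\ge7$; trees, and more generally configurations without the offending length-$3$ walks, for $k=5$), the paper's own proof of the remaining case contains an error rather than an argument, and the ``genuine work'' you anticipate for that case is the construction of a counterexample, not a proof.
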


\begin{proof}
This can be shown along the same lines as the proof of \autoref{prop:pn-moregeneral}, but some of the estimates change.
The minimal degrees of the ideals $I$ and $J$ become
\begin{align*}
\mindeg(I) & \geq \min\{ \deg(t_i^{2}), \deg(a_{ij}a_{jl}), \deg(a_{ij}t_l) \}  \ge 2 h \,,\\ 
\mindeg(J) & =  \min\{ \deg(t_i), \deg(a_{ij}) \} \ge h\,,
\end{align*}
where we abbreviate $h \coloneqq \abrunden{\frac k2}$. 
Note that $h\geq2$ by the assumption on $k$.
Hence, the minimal degrees in the minimal projective resolution are now
\begin{align}
\tag{e}
\label{eq:even3}
\mindeg \Tor^A_{2p}(R,R) \geq &\  \max\{ 2ph, 2h + 2(p-1)h \}=2ph\,,\\
\tag{o}
\label{eq:odd3}
\mindeg \Tor^A_{2p+1}(R,R) \geq &\ 2ph+h\,.
\end{align}
Furthermore, note that 
\begin{equation}
\tag{$A$}
\label{eq:maxdeg3}
\maxdeg(A) = k \leq 2h+1\,.
\end{equation}

We will check that the assumptions of \autoref{prop:max-min-degree} are satisfied for $q > 2$, which concludes the proof. Indeed, for $q=2p$ with $p\ge 2$,
\begin{align*}
\maxdeg(A)+q-2\overset{\eqref{eq:maxdeg3}} \leq  2h+2p-1 < 2h+2p \leq  2ph \overset{\eqref{eq:even3}} \leq \mindeg\Tor_{2p}^A(R,R)\,.
\end{align*}
To see this, we still need $h+p\leq ph$. This inequality is equivalent to $\frac{h}{h-1} \leq p$, as $h\geq 2$, and holds as $p\geq 2$.
Similarly, for $q=2p+1$ with $p\ge 1$, we have 
\begin{align*}
\maxdeg(A)+q-2\overset{\eqref{eq:maxdeg3}} \leq 2h+2p <  2ph+h \overset{\eqref{eq:even}} \leq \mindeg\Tor_{2p+1}^A(R,R)\,.
\end{align*}
Here, the middle $2h+2p <  2ph+h$ is equivalent to $\frac{h}{h-1} < 2p$, as $h\geq2$, hence the inequality holds due to $p \geq1$. 
\end{proof}

\begin{remark}
\label{rem:CYproperty}
If we assume in \autoref{prop:pn-moregeneral} that the $P_i$ are $\IP^n[k]$-objects (not just $\IP$-like), the assumption $\frac{nk}2\le h\le nk$ already implies $h=\frac{nk}2$; compare the proof of \autoref{cor:tree} below. 
In this case, the assumption $\gcd(k,h)>1$ is automatically fulfilled if $n$ is even or $k$ is a multiple of $4$.

Similarly, if we assume the objects in \autoref{prop:spherical-moregeneral} to be spherical, the assumption $\abrunden{\frac k2}\le h_{ij}\le k$ already implies $h\in {\textstyle \left\{\abrunden{\frac k2}, \aufrunden{\frac k2} \right\}}$.
\end{remark}

\begin{remark}
For $k=1$, the assertion of \autoref{prop:spherical-moregeneral} has to be false.
To see a counterexample, consider an elliptic curve $E$. 
Then $\Db(\Coh(E))$ is generated by the $A_2$-sequence of $1$-spherical sheaves $\reg_E$ and $\reg_p$ for any $p\in E$. 
Indeed, out of these two sheaves one can construct all the line bundles $\reg(n\cdot p)$ by successive cones, 
and the line bundles $\reg(n\cdot p)$ contain an ample sequence. 
Now, one can check that the graded endomorphism algebra $\End^*(\reg_E\oplus \reg_p)$ is the same, regardless of the chosen elliptic curve $E$ and point $p\in E$. 
However, two non-isomorphic elliptic curves $E\not\cong E'$ always have non-equivalent bounded derived categories; see, for example, \cite[Cor.~5.46]{Huybrechts}. 

For $k=2$ and $3$, we still expect intrinsic formality, as in the case of algebras coming from $A_m$-configurations of such objects; see \cite{Seidel-Thomas}.
\end{remark}

The following corollary in combination with \autoref{thm:generatedcat} gives \autoref{main:pn-many} in the introduction.

\begin{corollary}\label{cor:tree}
Let $\{P_i\}$ be a tree of $\IP^n[k]$-objects in a cocomplete  dg-enhanced triangulated category $\cT$ with 
\begin{itemize}
\item either $n,k\ge 2$, $nk$ even, and $\gcd(k,{\textstyle \frac{nk}2})>1$;
\item or $n=1$ and $k\ge 4$.
\end{itemize}
 Then the thick subcategory $\genby{\{P_i\}}$ is independent of the ambient category $\cT$.
\end{corollary}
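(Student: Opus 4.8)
The plan is to reduce the statement to the intrinsic formality of the single graded algebra $A \coloneqq \End^*(\bigoplus_i P_i)$ and then to invoke \autoref{cor:intformal}. Since a tree is in particular a finite graph and thick subcategories are closed under finite direct sums and summands, we have $\genby{\{P_i\}} = \genby{\bigoplus_i P_i}$. Writing $T \coloneqq \bigoplus_i P_i$, which is a compact object of the cocomplete dg-enhanced category $\cT$, \autoref{cor:intformal} yields $\genby{T} \cong \D(A)^c$ as soon as $A$ is intrinsically formal, and the right-hand side manifestly does not depend on $\cT$. So everything comes down to establishing intrinsic formality of $A$.

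To that end I would first determine the cohomological degrees in which the off-diagonal Hom-spaces are concentrated. For adjacent vertices $i,j$ the tree condition gives $\Hom^*(P_i,P_j)=\kk[-h_{ij}]$ for a single integer $h_{ij}$, and the $nk$-Calabi--Yau property furnishes a functorial isomorphism $\Hom^{h_{ij}}(P_i,P_j)\cong \Hom^{\,nk-h_{ij}}(P_j,P_i)^\vee$. As both spaces are one-dimensional, this forces the relation
\[
 h_{ij}+h_{ji}=nk \qquad\text{for every edge } \{i,j\}.
\]
This Calabi--Yau relation is the single arithmetic input driving the whole argument.

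With this in hand the proof splits into the two cases of the hypothesis. In the first case $n,k\ge 2$ with $nk$ even, so that $nk/2$ is an integer; because \autoref{prop:pn-moregeneral} requires $\Hom^*(P_i,P_j)$ and $\Hom^*(P_j,P_i)$ in the \emph{same} degree, the relation $h_{ij}+h_{ji}=nk$ pins the common value to $h=nk/2$ (compare \autoref{rem:CYproperty}). One then checks the remaining hypotheses of \autoref{prop:pn-moregeneral}: trivially $\tfrac{nk}2\le h\le nk$, and $\gcd(k,h)=\gcd(k,\tfrac{nk}2)>1$ holds by assumption; hence $A$ is intrinsically formal. In the second case $n=1$, the $P_i$ are $k$-spherical with $nk=k\ge 4$, and $h_{ij}+h_{ji}=k$ places each connecting degree in $\bigl\{\abrunden{k/2},\aufrunden{k/2}\bigr\}\subset[\abrunden{k/2},k]$, which is exactly the range required by \autoref{prop:spherical-moregeneral}; that proposition then again yields intrinsic formality of $A$. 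In either case \autoref{cor:intformal} completes the argument.

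The main obstacle I anticipate is precisely the step of showing that the connecting degrees sit in the upper range demanded by the two propositions---equivalently, that the two orientations of each edge carry the same (case $n\ge 2$) or nearly the same (spherical case) cohomological degree. The Calabi--Yau relation only records the \emph{sum} $h_{ij}+h_{ji}=nk$; upgrading this to the one-sided bound $h_{ij}\ge nk/2$ (so that both orientations qualify for the hypotheses of \autoref{prop:pn-moregeneral} and \autoref{prop:spherical-moregeneral}) is the delicate point, and it is exactly where the genuine Calabi--Yau assumption on the $P_i$---rather than merely the $\IP$-like property---must be brought to bear.
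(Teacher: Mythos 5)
Your reduction to intrinsic formality of the endomorphism algebra and your derivation of the Calabi--Yau relation $h_{ij}+h_{ji}=nk$ are both correct, and you have correctly located the crux; but your middle paragraph does not close it, so the argument as written has a genuine gap. The claim that ``because \autoref{prop:pn-moregeneral} requires $\Hom^*(P_i,P_j)$ and $\Hom^*(P_j,P_i)$ in the same degree, the relation $h_{ij}+h_{ji}=nk$ pins the common value to $h=nk/2$'' is circular: the equal-degree condition is a hypothesis of that proposition which you must \emph{verify}, not something you may impose. The relation $h_{ij}+h_{ji}=nk$ is perfectly consistent with, say, $h_{ij}=1$ and $h_{ji}=nk-1$ (shift one object of a normalized configuration to produce such an example), in which case \autoref{prop:pn-moregeneral} simply does not apply to $A=\End^*(\bigoplus_i P_i)$, and your final appeal to \autoref{cor:intformal} has nothing to stand on. \autoref{rem:CYproperty} does not help either: it says that \emph{if} the common-degree hypothesis already holds with $\frac{nk}2\le h\le nk$, then $h=\frac{nk}2$; it does not produce a common degree.

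The missing idea --- and the actual content of the paper's proof --- is that one is free to replace each $P_i$ by a shift $P_i[n_i]$: this changes neither the generated thick subcategory nor the property of being a $\IP^n[k]$-object, but it does change the connecting degrees, $h_{ij}\mapsto h_{ij}+n_i-n_j$. The paper chooses the $n_i$ by propagating outward from a root vertex, setting $n_j=n_i+h_{ij}-\frac{nk}2$ along each edge, so that after shifting every edge carries the common degree $h=\frac{nk}2$ (respectively a degree in $\bigl\{\lfloor k/2\rfloor,\lceil k/2\rceil\bigr\}$ in the spherical case); then \autoref{prop:pn-moregeneral} or \autoref{prop:spherical-moregeneral} applies to the shifted configuration, and \autoref{cor:intformal} finishes the proof. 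This is exactly where the tree hypothesis enters: since $Q$ has no cycles, these edge-by-edge shift assignments can never conflict. Your proposal never uses the tree hypothesis at all (the two propositions hold for arbitrary graph configurations), which is a structural signal that something is missing --- for configurations with cycles the normalization can be obstructed (compare \autoref{rem:closing-condition}), so no argument that ignores the tree condition can be complete.
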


\begin{proof}
Replacing the objects $P_i$ by appropriate shifts $P_i[n_i]$, we may assume that they satisfy the assumptions on $\Hom^*(P_i, P_j)$ of \autoref{prop:pn-moregeneral} and \autoref{prop:spherical-moregeneral}, respectively.
For the restrictions on $n$ and $k$ see \autoref{rem:CYproperty}.
Denote by $Q$ the underlying tree. We may start with some edge $i\in Q$ and set $n_{i}=0$. 
By definition of a $Q$-configuration of objects, for adjacent $i$ and $j$, we have
\[
 \Hom^*(P_i,P_j)=\kk[-a]\,,\ \Hom^*(P_j, P_i)=\kk[-b]
\]
for some $a,b\in \IZ$.
Note that we assume the objects to be $\IP$-objects (not just $\IP$-like). Hence, Serre duality gives 
$a+b=nk$. Hence, in the case where $nk$ is even, we may set $n_j=a-h$, so after replacing $P_j$ by $P_j[a-h]$ we get 
\[
 \Hom^*(P_i,P_j)=\kk[-h]= \Hom^*(P_j, P_i)\,.
\]
Since, by assumption, $Q$ has no loops, there is no obstruction to extending this procedure to the whole of $Q$. The case where $n=1$ and $k\ge 5$ is odd works similarly; compare \cite[\S 4c]{Seidel-Thomas}.

Now we can use \autoref{prop:pn-moregeneral} and \ref{prop:spherical-moregeneral} together with \autoref{cor:intformal} to conclude that $\genby{\{P_i\}}\cong \D(A)^c$ where $A=\End(\bigoplus_i P_i)$.  
\end{proof}

\begin{remark}
There are results, analogous to those of this subsection, on the formality of the endomorphism algebras of configurations of $\IP^n[k]$-objects for $k$ negative. 
To see this, one can use that a non-positively graded algebra is intrinsically formal as soon as 
\[
\mindeg(A)+q-2 > \maxdeg(\Tor_q^A(R,R))\quad \text{for $q\ge 3$}\,,
\]
which is analogous to \autoref{prop:max-min-degree}.

We chose to concentrate on $\IP^n[k]$-objects with $k$ positive, since those with negative $k$ are rare in practice, for example, negative Calabi--Yau objects cannot appear in derived categories of smooth varieties; compare \cite[Lem.\ 1.7]{Hochenegger-Kalck-Ploog-RT}.
\end{remark}

\subsection{Actions induced by $A_m$-configurations of $\IP$-objects}
\label{subsec:faithful}

Seidel and Thomas \cite{Seidel-Thomas} used the formality of $\End(\bigoplus_i P_i)$, where $P_1,\dots, P_m$ is an $A_m$-configuration of $k$-spherical objects in a cocomplete dg-enhanced triangulated category $\cT$, in order to prove that the induced action of the braid group $B_{m+1}$ on $\cT$ is faithful. This means that the subgroup $\genby{\TTT_{P_1},\dots,\TTT_{P_m}} \subset \Aut(\cT)$ generated by the spherical twists is isomorphic to $B_{m+1}$. 
Consider the $P_i$ as $\IP^1[k]$-objects and the associated $\IP$-twists are then the square of the spherical twists: $\PPP_{P_i}\cong \TTT_{P_i}^{\,2}$; compare \autoref{rem:twist}.   
It follows from the description of the group spanned by the squares of the standard generators of the braid group \cite{Collins--squares} that the only relations between the $\IP$-twists are
the commutativity relations 
\[
 \PT_i\PT_j=\PT_j\PT_i\quad \text{for $|i-j|>1$.}
\]
Hence, it makes sense to conjecture the following more general faithfulness result.

\begin{conjecture}
Let $P_1,\dots, P_m$ be an $A_m$-configuration of $\IP^n[k]$-objects with $k\ge 2$.
Then the only relations between the associated $\IP$-twists $\PT_i:=\PT_{E_i}\in \Aut(\cT)$ are the commutativity relations 
\[
 \PT_i\PT_j=\PT_j\PT_i\quad \text{for $|i-j|>1$.}
\]
\end{conjecture}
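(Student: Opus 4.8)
The plan is to adapt the strategy that Seidel and Thomas \cite{Seidel-Thomas} used for the braid group, with the formality result \autoref{main:pn-many} playing the role of the reduction step that makes everything universal. The statement has two halves: that the commutativity relations hold, and that there are no further relations. The first half only requires exhibiting a group homomorphism $G\to \Aut(\cT)$, $\pi_i\mapsto \PT_i$, from the abstract group $G$ generated by $\pi_1,\dots,\pi_m$ subject only to $\pi_i\pi_j=\pi_j\pi_i$ for $|i-j|>1$; the actual content of the conjecture is that this homomorphism is \emph{injective}.

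For the commutativity relations, observe that for $|i-j|>1$ the objects $P_i$ and $P_j$ are non-adjacent, so $\Hom^*(P_i,P_j)=\Hom^*(P_j,P_i)=0$. From the double-cone construction in \autoref{sec:p-twists}, the twist $\PT_i$ alters an object $F$ only through $\Hom^*(P_i,F)$, and in particular fixes $P_j$; a direct comparison of the defining cones then yields $\PT_i\PT_j\cong\PT_j\PT_i$, producing the homomorphism $G\to\Aut(\cT)$.

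For injectivity I would first pass to a universal model. If a word $w(\PT_1,\dots,\PT_m)$ is isomorphic to the identity on $\cT$, then it is so on the subcategory $\genby{P_1,\dots,P_m}$, which each $\PT_i$ preserves since the defining cones lie in it. By \autoref{cor:tree} this subcategory is equivalent to $\D(A)^c$ with $A=\End^*(\bigoplus_i P_i)$, independently of $\cT$; and since the $\IP$-twist is built functorially from the objects and their Hom-complexes, the restricted twists correspond under this equivalence to the intrinsic $\IP$-twists of the model. Hence it suffices to prove faithfulness once, in the universal model. In the spherical case $n=1$ this can be completed: there $\PT_i\cong\TTT_{P_i}^{\,2}$ by \autoref{rem:twist}, the $\TTT_{P_i}$ generate a faithful copy of $B_{m+1}$ by \cite{Seidel-Thomas}, and Collins' description \cite{Collins--squares} of the subgroup generated by the squares of the standard generators shows that its only relations are the commutativity relations, giving $G\cong\genby{\PT_1,\dots,\PT_m}$.

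The main obstacle is the case $n\ge 2$, where $\PT_i$ is no longer a power of a spherical twist, so Collins' combinatorial input disappears and a genuinely new faithfulness argument is needed. The route I would pursue is to construct, in the universal model $\D(A)^c$, a faithful ``detecting'' invariant for the twist group --- for instance, tracking the action of a word on the generators $P_i$ together with a compatible filtration or t-structure, in the spirit of the curve-and-intersection-number arguments of \cite{Seidel-Thomas} --- and to show that every non-trivial reduced word in $G$ necessarily moves some object. Proving that such an invariant is \emph{faithful}, rather than merely recording the commutativity relations, is precisely the difficult point, and is what currently keeps the statement at the level of a conjecture.
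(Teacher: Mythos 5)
This statement is a \emph{conjecture} in the paper: the authors give no proof, only the surrounding discussion in the subsection on actions induced by $A_m$-configurations. Your outline reproduces that discussion essentially point for point --- the commutativity relations follow easily from the vanishing of $\Hom^*(P_i,P_j)$ for non-adjacent $i,j$ (the paper cites \cite[Cor.~2.5]{Krug-autos} rather than re-deriving it from the cone construction), the reduction to a single universal model is exactly the paper's closing remark that by \autoref{cor:tree} one example per $(m,n,k)$ would suffice, and the $n=1$ case via $\PPP_{P_i}\cong\TTT_{P_i}^{\,2}$, the Seidel--Thomas faithfulness theorem and Collins' result \cite{Collins--squares} is precisely the paper's motivation for stating the conjecture. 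You also correctly identify the genuinely open part ($n\ge 2$), which is the same gap the paper leaves open.

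One caveat worth flagging: your universal-model reduction invokes \autoref{cor:tree} without its numerical hypotheses. That corollary requires $nk$ even with $\gcd(k,\tfrac{nk}{2})>1$ when $n\ge 2$, and $k\ge 4$ when $n=1$, whereas the conjecture is stated for all $k\ge 2$; the paper's own remark is careful to restrict the ``one example suffices'' reduction to exactly those values. So even the reduction step, let alone the faithfulness argument, is not available in the full generality of the conjecture as stated.
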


It is easy to see that, for two $\IP^n[k]$-objects with vanishing graded Hom-space between them, the associated $\IP$-twists commute; see \cite[Cor.\ 2.5]{Krug-autos}. 
Hence, the unknown and probably difficult part of the conjecture is that there are no further relations between the twists associated to an $A_m$-configuration of $\IP$-objects.

By \autoref{cor:tree}, it would be sufficient to consider one particular example of an $A_m$-con\-fig\-ura\-tion of $\IP^n[k]$-objects in order to prove (or disprove) the conjecture for a fixed value of $m$, $n$ and $k$ with $nk$ even and $\gcd(k,\frac {nk}2)>1$.

\section{The triangulated category generated by the structure sheaf}
\label{sec:OX}

Let $X$ be a smooth projective variety over $\IC$.
Note that the graded endomorphism algebra $\End^*(\reg_X)$ coincides with the cohomology algebra $\Ho^*(\reg_X)$ where the multiplication is given by the cup product. 
This algebra, sometimes called the  \emph{homological unit} of $X$ is conjectured to be a derived invariant of the variety $X$; see \cite{Abuaf-homologicalunit}.
In this section we will show that the generated thick triangulated category $\langle \reg_X\rangle\subset \Db(\Coh(X))$ only depends on this graded algebra. 

Actually, we show this statement for any compact complex manifold $X$ which \emph{satisfies the $\partial\bar\partial$-lemma}, that is, $X$ has the following property.

\emph{Let $\omega$ be a complex-valued differential form on $X$ which is $\partial$-closed and $\bar\partial$-closed. If $\omega$ is $\partial$-exact or $\bar\partial$-exact, then it is already $\partial\bar\partial$-exact, which means that there is a differential form $\chi$ with $\partial\bar\partial \chi=\omega$.}

Every compact K\"ahler manifold satisfies the $\partial\bar\partial$-Lemma; see, for example, \cite[\S 6.1]{Voisin-book1}. 
However, there are compact complex manifolds satisfying the $\partial\bar\partial$-Lemma which are not K\"ahler. 
Still they share many properties of compact K\"ahler manifolds; to get an impression see \cite{Deligneetal}, \cite{Angella}, \cite{Anthesetal}.

\begin{theorem}\label{thm:unit}
Let $Y$ be a compact complex manifold satisfying the $\partial\bar\partial$-Lemma. Let $\Db_{\textrm{coh}}(Y)$ be the subcategory of complexes with bounded and coherent cohomology in $\D(\Mod(Y))$ and $\genby{\OO_Y}\subset \Db_{\textrm{coh}}(Y)$ the thick subcategory generated by $\reg_Y$.
Then there is an equivalence $\genby{\OO_Y}\cong \D(\Ho^*(\reg_Y))^c$.
\end{theorem}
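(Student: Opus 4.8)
The plan is to reduce the statement to the formality of a single, geometrically defined dg-algebra and then to establish that formality via the $\partial\bar\partial$-Lemma. Since $\D(\Mod(Y))$ is cocomplete and $\reg_Y$ is a compact object (this uses that $Y$ is compact, so that $R\Gamma(Y,\blank)=\Hom(\reg_Y,\blank)$ commutes with arbitrary direct sums), \autoref{thm:generatedcat} yields an equivalence $\genby{\reg_Y}\cong\D(B)^c$, where $B=\Hom^\bullet(\reg_Y,\reg_Y)$ is the derived endomorphism dg-algebra computed in a dg-enhancement of $\D(\Mod(Y))$. Because quasi-isomorphic dg-algebras have equivalent derived categories, and hence equivalent subcategories of compact objects (cf.\ the mechanism of \autoref{cor:intformal}), it suffices to prove that $B$ is \emph{formal}, i.e.\ quasi-isomorphic as a dg-algebra to its cohomology $\Ho^*(B)=\End^*(\reg_Y)\cong\Ho^*(\reg_Y)$. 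I stress that, unlike in the previous sections, I do \emph{not} claim that $\Ho^*(\reg_Y)$ is intrinsically formal; I only show that this particular $B$ is formal.

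The first step is to identify $B$ with the Dolbeault dg-algebra. The Dolbeault resolution $0\to\reg_Y\to\mathcal{A}^{0,0}\xrightarrow{\bar\partial}\mathcal{A}^{0,1}\to\cdots$ of $\reg_Y$ by the fine sheaves of smooth $(0,q)$-forms computes $R\Gamma(Y,\reg_Y)=\RHom(\reg_Y,\reg_Y)$ upon taking global sections, and the wedge product of forms realises the Yoneda product. Thus, in a suitable enhancement, $B$ is quasi-isomorphic as a dg-algebra to $\bigl(\mathcal{A}^{0,\bullet}(Y),\bar\partial,\wedge\bigr)$, whose cohomology is the Dolbeault cohomology ring $\bigoplus_q H^{0,q}_{\bar\partial}(Y)\cong\Ho^*(\reg_Y)$.

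The heart of the argument, and the only place where the $\partial\bar\partial$-Lemma enters, is the formality of $\bigl(\mathcal{A}^{0,\bullet}(Y),\bar\partial\bigr)$. Following the classical strategy of Deligne--Griffiths--Morgan--Sullivan, I would pass to the subalgebra $\mathcal{Z}=\ker\partial\cap\mathcal{A}^{0,\bullet}(Y)$ of $\partial$-closed $(0,\bullet)$-forms, which is closed under both $\bar\partial$ and $\wedge$. Since $\partial$ raises the holomorphic degree, one has $\im\partial\cap\mathcal{A}^{0,\bullet}(Y)=0$; combined with the $\partial\bar\partial$-Lemma (for $\partial\alpha=0$ the form $\bar\partial\alpha$ is $\partial$-closed and $\bar\partial$-exact, hence lies in $\im\partial$, hence vanishes in this bidegree) this forces $\bar\partial$ to vanish identically on $\mathcal{Z}$. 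Therefore $(\mathcal{Z},\bar\partial)=(\mathcal{Z},0)$ has zero differential, and the inclusion of dg-algebras $(\mathcal{Z},0)\hookrightarrow(\mathcal{A}^{0,\bullet}(Y),\bar\partial)$ is a quasi-isomorphism: surjectivity on cohomology is the statement that every $\bar\partial$-class admits a $\partial$-closed representative, and injectivity is the statement that a $\partial$-closed, $\bar\partial$-exact form must vanish, both of which are immediate from the $\partial\bar\partial$-Lemma. Hence $\mathcal{Z}\cong\bigoplus_q H^{0,q}_{\bar\partial}(Y)$ as graded algebras, and $\bigl(\mathcal{A}^{0,\bullet}(Y),\bar\partial\bigr)$ is formal.

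Combining the three steps, $B$ is quasi-isomorphic to $\Ho^*(\reg_Y)$, and consequently $\genby{\reg_Y}\cong\D(B)^c\cong\D(\Ho^*(\reg_Y))^c$, as claimed. I expect the main obstacle to be the careful bookkeeping in the identification of $B$ with the Dolbeault dg-algebra as dg-\emph{algebras} (not merely as complexes) inside a fixed enhancement, so that the Yoneda product is genuinely matched with the wedge product; once this multiplicative identification is in place, the $\partial\bar\partial$-formality of the Dolbeault complex is classical.
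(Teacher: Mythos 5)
Your overall strategy coincides with the paper's: reduce via \autoref{thm:generatedcat} to showing that the derived endomorphism dg-algebra $B$ of $\reg_Y$ in a suitable enhancement is quasi-isomorphic to $\Ho^*(\reg_Y)$, identify $B$ with the global Dolbeault dg-algebra $\bigl(A^{0,\bullet},\bar\partial,\wedge\bigr)$, and deduce formality of the latter from the $\partial\bar\partial$-Lemma. Your treatment of the last step is the one place where you genuinely diverge, and it is a correct, self-contained improvement: where the paper simply cites Neisendorfer--Taylor \cite{Neis-Taylor} (\autoref{lem:Dolbeault}), you run the Deligne--Griffiths--Morgan--Sullivan argument directly, and it checks out --- $\cZ=\ker\partial\cap A^{0,\bullet}$ is closed under $\wedge$ and $\bar\partial$; for $\partial$-closed $\alpha$ the form $\bar\partial\alpha$ is $\partial$-closed, $\bar\partial$-closed and $\bar\partial$-exact, hence $\partial\bar\partial$-exact, hence zero since $\im\partial\cap A^{0,\bullet}=0$; and the two applications of the Lemma giving surjectivity and injectivity of $(\cZ,0)\hookrightarrow (A^{0,\bullet},\bar\partial)$ on cohomology are exactly right.

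The genuine gap is the step you yourself flag as ``bookkeeping'': the multiplicative identification of $B$ with the Dolbeault algebra. Observing that the Dolbeault resolution computes $\RHom(\reg_Y,\reg_Y)$ and that the wedge product induces the Yoneda product identifies the two as complexes and identifies their cohomologies as graded algebras, but this is precisely the kind of statement one is not allowed to conflate with a dg-algebra quasi-isomorphism in a formality argument: a priori $B$ could be a non-formal dg-algebra with the same cohomology ring, and the whole point of the theorem is to rule this out. Concretely, if you realise $B$ as $\Hom^\bullet(I,I)$ for an h-injective resolution $I$ of $\reg_Y$ in the standard enhancement of $\D(\Mod(Y))$, there is no obvious zigzag of dg-algebra maps connecting $B$ to $A^{0,\bullet}$; the natural multiplicative map $A^{0,\bullet}\to\Hom^\bullet(\cA_Y^{0,\bullet},\cA_Y^{0,\bullet})$, $\omega\mapsto\omega\wedge(-)$, lands in a Hom-complex which is not known to compute $\RHom$, since the Dolbeault complex of sheaves is fine but not h-injective. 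The paper resolves exactly this point by choosing the enhancement to be Block's dg-category $\cP_A$ \cite{Block-dg}: there the object $E=(\cA,\bar\partial)$, with $\cA=A^{0,0}$, is sent to the Dolbeault resolution of $\reg_Y$ under Block's equivalence $\Ho^0(\cP_A)\cong\Db_{\textrm{coh}}(Y)$, and $\Hom^\bullet_{\cP_A}(E,E)\cong A^{0,\bullet}$ holds on the nose by the definition of the Hom-complexes in $\cP_A$, after which \autoref{thm:generatedcat}\eqref{thm:i} applies (this also sidesteps your appeal to compactness of $\reg_Y$ in all of $\D(\Mod(Y))$, since part \eqref{thm:i} only needs idempotent completeness of $\Db_{\textrm{coh}}(Y)$). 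So your proof becomes complete once you invoke Block's theorem, or otherwise actually construct an enhancement with this property; as written, the central identification is asserted rather than proved.
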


In this case, by the GAGA principle, we get an equivalence $\Db(\Coh(X))\cong \Db_{\textrm{coh}}(\Xan)$; see \cite[Thm.\ 2.2.10]{Caldararu-thesis}.  
Consequently, we also obtain $\genby{\OO_X}\cong \D(\Ho^*(\reg_X))^c$. 

We denote by $A^{0,\bullet}$ the \emph{Dolbeault complex} on the compact complex manifold $Y$. Its terms $A^{0,p}$ are the anti-holomorphic $p$-forms on $Y$ and its differential is given by $\bar \partial$.

\begin{proposition}[{\cite[Thm.\ 8]{Neis-Taylor}}]\label{lem:Dolbeault}
Let $Y$ be a connected complex manifold satisfying the $\partial\bar\partial$-Lemma.
Then the Dolbeault complex $A^{0,\bullet}$ on $Y$ is formal. 
\end{proposition}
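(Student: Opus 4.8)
The plan is to adapt the Deligne--Griffiths--Morgan--Sullivan formality argument to the Dolbeault complex, using the operator $\partial$ as an auxiliary differential. The starting observation is that, on the full algebra of forms, $\partial$ is a derivation anticommuting with $\bar\partial$ (since $\partial\bar\partial = -\bar\partial\partial$), so $\ker\partial$ is a $\bar\partial$-stable subalgebra. I would therefore single out the graded subalgebra
\[
K := \ker\partial \cap A^{0,\bullet} = \{\alpha \in A^{0,\bullet} : \partial\alpha = 0\},
\]
which is closed under wedge (as $\partial$ is a derivation) and under $\bar\partial$ (if $\partial\alpha=0$ then $\partial\bar\partial\alpha = -\bar\partial\partial\alpha = 0$). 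The goal is to show that the inclusion $\iota\colon (K,\bar\partial)\hookrightarrow (A^{0,\bullet},\bar\partial)$ is a quasi-isomorphism of dg-algebras and that $\bar\partial$ vanishes identically on $K$; together these exhibit $(A^{0,\bullet},\bar\partial)$ as formal, i.e.\ quasi-isomorphic to its cohomology equipped with the zero differential.

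The crucial structural point, which both collapses the usual zig-zag and drives the argument, is that $\partial$ raises the holomorphic degree, so $\im\partial \cap A^{0,\bullet}=0$. First I would use this to prove $\bar\partial|_K = 0$: for $\alpha\in K$ the form $\bar\partial\alpha$ is $\partial$-closed, $\bar\partial$-closed and $\bar\partial$-exact, hence by the $\partial\bar\partial$-Lemma equal to $\partial\bar\partial\chi$ for some $\chi$; but $\partial\bar\partial\chi$ has holomorphic degree $\geq 1$ while $\bar\partial\alpha \in A^{0,\bullet}$, forcing $\bar\partial\alpha=0$. Thus $(K,\bar\partial)=(K,0)$ and the only remaining content is the single map $\iota$.

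The heart of the proof is then checking that $\iota$ induces an isomorphism $K \xrightarrow{\sim} H^{0,\bullet}_{\bar\partial}(Y)$, and this is where the $\partial\bar\partial$-Lemma does the real work. For surjectivity, given a $\bar\partial$-closed $\omega \in A^{0,q}$ I would observe that $\partial\omega\in A^{1,q}$ is $\partial$-exact, $\partial$-closed and $\bar\partial$-closed, so the $\partial\bar\partial$-Lemma yields $\partial\omega = \partial\bar\partial\chi$ with $\chi\in A^{0,q-1}$ (extracting the relevant bidegree component); then $\omega' := \omega - \bar\partial\chi$ lies in $K$ and is $\bar\partial$-cohomologous to $\omega$. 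For injectivity, if $\alpha\in K$ is $\bar\partial$-exact then $\alpha$ is $\partial$-closed, $\bar\partial$-closed and $\bar\partial$-exact, so again $\alpha = \partial\bar\partial\chi$, which vanishes by the same degree obstruction. Since $\iota$ is visibly an algebra homomorphism, this establishes formality.

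The main obstacle is conceptual rather than computational: unlike the de Rham setting, the natural second differential $\partial$ does not preserve $A^{0,\bullet}$, so the standard machine does not apply verbatim. The resolution is precisely the degree identity $\im\partial \cap A^{0,\bullet}=0$, which simultaneously kills the induced differential on $K$ and supplies the degree obstruction exploited in both halves of the quasi-isomorphism check; once this is in place, the two applications of the $\partial\bar\partial$-Lemma are routine. I would finally note that connectedness and the ambient compactness enter only through the validity of the $\partial\bar\partial$-Lemma itself.
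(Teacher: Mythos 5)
Your proof is correct, and it takes a genuinely different route from the paper: the paper does not prove this proposition at all but quotes it directly from Neisendorfer--Taylor \cite{Neis-Taylor}, whose Dolbeault homotopy theory treats the full bigraded algebra and much more. What you give instead is a short, self-contained adaptation of the Deligne--Griffiths--Morgan--Sullivan argument \cite{Deligneetal} to the row $A^{0,\bullet}$, and the adaptation is sound. Your key observation, $\im \partial \cap A^{0,\bullet}=0$, does double duty: it forces the induced differential on $K=\ker\partial\cap A^{0,\bullet}$ to vanish, and it supplies the degree obstruction in both the injectivity and surjectivity checks, so the usual three-term zig-zag of DGMS (through $\ker d^c$ and its quotient) collapses to the single dg-algebra inclusion $(K,0)\hookrightarrow (A^{0,\bullet},\bar\partial)$. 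All three invocations of the $\partial\bar\partial$-Lemma (applied to $\bar\partial\alpha$, to $\partial\omega$, and to a $\bar\partial$-exact $\alpha\in K$) use it exactly in the form stated in the paper, and extracting the bidegree-$(0,q-1)$ component of $\chi$ is legitimate because $\partial\bar\partial$ shifts bidegree by $(1,1)$. Two small remarks: your argument in fact never uses connectedness (it is a hypothesis inherited from the reference, not needed here), and your closing claim that connectedness enters ``through the validity of the $\partial\bar\partial$-Lemma'' is not quite right --- a disjoint union of manifolds satisfying the lemma still satisfies it --- but this is immaterial. What your route buys is independence from the literature and a proof visibly within the paper's stated hypotheses; what the citation buys is the broader homotopy-theoretic framework of \cite{Neis-Taylor}, of which formality of the $(0,\bullet)$-row is a special case.
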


\begin{proof}[Proof of \autoref{thm:unit}]
Note that $\OO_Y$ is a compact object in $\Db_{\textrm{coh}}(Y)$. 
Thus, in view of \autoref{lem:Dolbeault} and \autoref{thm:generatedcat}\eqref{thm:i}, it is enough to find a pretriangulated dg-category $\EE$ together with an exact equivalence $\alpha\colon \Ho^0(\EE)\to \Db_{\textrm{coh}}(Y)$ and an object $E\in \EE$ with $\alpha(E)\cong \OO_Y$ and $\Hom_{\EE}^\bullet (E,E)\cong A^{0,\bullet}$.

A dg-enhancement of $\Db_{\textrm{coh}}(Y)$ is given by the category $\EE=\cP_A$ of \cite{Block-dg}; see, in particular, \cite[Thm.\ 4.3]{Block-dg}. 
The objects of $\cP_A$ are given by pairs $(M, \nabla)$ consisting of a graded module $M$ over $\cA=A^{0,0}$ and a connection $\nabla\colon M\otimes_\cA A^{0,\bullet}$ satisfying some additional conditions; see \cite[Def.\ 2.4]{Block-dg} for details.  
We consider the object $E=(\cA, \bar\partial)\in \cP_A$. Indeed, $\alpha(E)=\cA_Y^{0,\bullet}$ where $\alpha\colon \Ho^0(\cP_A)\isom\Db_{\textrm{coh}}(Y)$ is the equivalence constructed in \cite[Lem.\ 4.5]{Block-dg} and  $\cA_Y^{0,\bullet}$ denotes the Dolbeault complex of \emph{sheaves} (not their global sections as in $A^{0,\bullet}$). The complex $\cA_Y^{0,\bullet}$ is a resolution of $\OO_Y$; see, for example, \cite[Prop.\ 4.19]{Voisin-book1}. Hence, $\alpha(\EE)\cong \OO_Y$. 
The fact that $\Hom_{\EE}^\bullet(E,E)\cong A^{0,\bullet}$ follows directly from the definition of the Hom-complexes in the category $\cP_A$; see \cite[Def.\ 2.4]{Block-dg}. 
\end{proof}

\section{Examples of configurations of $\IP$-objects}\label{sec:example}

In the following examples, we assume that the characteristic of the field $\kk$ does not divide the order $n!$ of the group $\sym_n$.

\subsection{Trees of $\IP$-like objects on symmetric quotient stacks}

We recall a construction of $\IP^n[k]$-like objects from $k$-spherelike objects, which is essentially due to Ploog and Sosna in \cite{Ploog-Sosna}.
Let $X$ be a smooth projective variety and $E$ be a $k$-spherelike object in $\Db(X)$.
Consider the $n$-fold cartesian product $X^n$ with its projections $\pi_i \colon X^n \to X$.
Then we define
\[
E^{\boxtimes n} = \pi_1^* E \otimes \cdots \otimes \pi_n^* E \in \Db(X^n).
\]
There is a natural action on $X^n$ by the permutation group $\sym_n$. 
Actually, we can turn $E^{\boxtimes n}$ into an object $E\poslinn$ in the equivariant derived category $\Db_{\sym_n}(X^n)$ by equipping $E^{\boxtimes n}$ with the canonical linearisation given by permutation of the tensor factors.
By $E\neglinn\in\Db_{\sym_n}(X^n)$, we denote the object $E^{\boxtimes n}$ equipped with the linearisation which differs from the canonical one by the non-trivial character (also known as \emph{sign} or \emph{alternating representation}) $\alt$ of $\sym_n$.
Let $[X^n/\sym_n]$ be the quotient stack of $X^n$ by the permutation action of $\sym_n$.
By the definition of sheaves on a quotient stack, there is an equivalence $\Db_{\sym_n}(X^n) \cong \Db([X^n/\sym_n])$.

\begin{remark}
The above construction also works for $E$ inside $\Db(A) = \Db(\mod(A))$ where $A$ is a $\kk$-algebra. Instead of the cartesian product $X^n$ we consider $A^{\otimes n}$ and instead of the equivariant derived category we consider the derived category $\Db(\sym_n \# A^{\otimes n})$ of the algebra $\sym_n \# A^{\otimes n}$ which is known as the \emph{skew group algebra} in the literature.

More generally, there is the concept of the symmetric power $S^n\cT$ of a (dg-enhanced) triangulated category $\cT$ due to  Ganter and Kapranov \cite{Ganter-Kapranov}. This covers both of the constructions above, since $S^n\Db(X)\cong\Db_{\sym_n}(X^n)$ and $S^n\Db(A)\cong\Db(\sym_n\# A)$ for a variety $X$ and an algebra $A$, respectively.
\end{remark}

\begin{remark}
\label{rem:geometric-interpretation}
If $X$ is a surface, then $\Db_{\sym_n}(X^n)$ has a very geometric interpretation. 
Namely, the derived McKay correspondence of  Bridgeland,  King and Reid \cite{Bridgeland-King-Reid} and Haiman \cite{Haiman} gives an equivalence $\Db_{\sym_n}(X^n)\cong \Db(X^{[n]})$ where $X^{[n]}$ denotes the Hilbert scheme of $n$ points on $X$; see also \cite[\S 1.3]{Sca}.
In \autoref{app:geometric-pns} we will describe the corresponding $\IP$-objects in $\Db(X^{[n]})$.
\end{remark}

\begin{proposition}[{c.f.\ \cite[Lem.~4.2]{Ploog-Sosna}}]
\label{Prop:gradedhom}
Let $n,k\in \IN$ with $n\ge 2$ and $k$ even.
Let $X$ be a smooth projective variety and let $E$ be a $k$-spherelike object in $\Db(X)$.
Then $E\poslinn, E\neglinn \in \Db_{\sym_n}(X^n)$ are $\IP^n[k]$-like objects.

Moreover, if $E$ is a $k$-spherical object 
(e.g.\ if $E$ is spherelike and $X$ is a  Calabi--Yau variety of dimension $k$), 
then $E\poslinn$ and $E\neglinn$ are $\IP^n[k]$-objects.

Finally, let $E, F\in \Db(X)$ be objects with $\Hom^*(E,F)=\kk[-m]$ for some $m\in \IN$. Then
\begin{align*}
\Hom^*(E\poslinn, F\poslinn)= \Hom^*(E\neglinn, F\neglinn)=\begin{cases}\kk[-nm] \quad &\text{if $m$ is even,} \\                          0  \quad &\text{if $m$ is odd.}
                          \end{cases}
\\
\Hom^*(E\neglinn, F\poslinn)= \Hom^*(E\poslinn, F\neglinn)=\begin{cases}0\quad &\text{if $m$ is even,}\\                           \kk[-mn] \quad &\text{if $m$ is odd.}
                          \end{cases}
\end{align*}
\end{proposition}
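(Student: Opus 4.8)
The plan is to compute every graded Hom-space by reducing to two standard facts and then tracking signs. First I would recall that for $\sym_n$-equivariant objects $\mathcal F,\mathcal G\in\Db_{\sym_n}(X^n)$ one has $\Hom^*_{\sym_n}(\mathcal F,\mathcal G)=\Hom^*_{X^n}(\mathcal F,\mathcal G)^{\sym_n}$, the invariants of the underlying graded Hom-space, and that the Künneth isomorphism identifies $\Hom^*_{X^n}(E^{\boxtimes n},F^{\boxtimes n})$ with $\Hom^*_X(E,F)^{\otimes n}$ compatibly with the $\sym_n$-action, where $\sym_n$ permutes tensor factors with the Koszul sign prescribed by the grading. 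Feeding the two linearisations through the action $\phi\mapsto\lambda_{\mathcal G}(g)\circ g_*\phi\circ\lambda_{\mathcal F}(g)^{-1}$, I would check that each of the source and target multiplies this action by the sign character $\alt$ precisely when it carries the sign-twisted linearisation (as in $E\neglinn$); thus a source-twist and a target-twist cancel, whereas a single twist contributes one factor of $\alt$.

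For the last assertion, writing $\Hom^*_X(E,F)=\kk\cdot\phi$ with $\deg\phi=m$, the power $\kk\cdot\phi^{\otimes n}$ is one-dimensional in degree $nm$, and a transposition acts on it by the Koszul sign $(-1)^{m^2}=(-1)^m$; hence the bare permutation character is trivial for $m$ even and is $\alt$ for $m$ odd. Multiplying in one copy of $\alt$ for each sign-twisted linearisation, the total character on this line is trivial — so the invariants are $\kk[-nm]$ — exactly when the parity of $m$ matches the number of sign-twisted linearisations, and otherwise equals $\alt$, whose invariants vanish since $n\ge2$. Running through the four combinations reproduces the stated formulas.

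For the $\IP^n[k]$-like claim I would set $E=F$. A $k$-spherelike object satisfies $\End^*_X(E)=\kk[s]/(s^2)$ with $\deg s=k$, so $\End^*_{X^n}(E^{\boxtimes n})=(\kk[s]/(s^2))^{\otimes n}$; as $k$ is even the Koszul signs disappear and this is the commutative algebra $\kk[s_1,\dots,s_n]/(s_1^2,\dots,s_n^2)$ with each $s_i$ in degree $k$. The canonical linearisation induces the plain permutation action, so $\End^*_{\sym_n}(E\poslinn)$ is the invariant subalgebra, spanned by the elementary symmetric polynomials $e_0,\dots,e_n$; invoking the standing hypothesis $\mathrm{char}\,\kk\nmid n!$ one finds $e_1^{\,r}=r!\,e_r$, so that $t:=e_1$ generates and $\kk[t]/t^{n+1}$ results, with $\deg t=k$. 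The two sign-twists cancel for $E\neglinn$, yielding the same algebra. The Calabi--Yau statement I would obtain by lifting the $k$-spherical Serre duality on $X$ to an $nk$-duality for $E^{\boxtimes n}$ on $X^n$ through the $n$-fold tensor product, and observing that forming $\sym_n$-invariants commutes with $\kk$-linear duality in this coprime-characteristic setting.

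The hard part will be none of these formal reductions but the sign bookkeeping of the second paragraph: correctly combining the Koszul signs of the graded permutation action with the $\alt$-twists produced by the two chosen linearisations. Once the resulting one-dimensional character is pinned down, extracting the invariants — and hence all four cases — is immediate.
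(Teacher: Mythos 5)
Your proposal is correct and takes essentially the same route as the paper: the paper simply invokes the equivariant K\"unneth formula, by which equal linearisations give the graded symmetric power $S^n\Hom^*(E,F)$ and opposite linearisations give the graded exterior power $\bigwedge^n\Hom^*(E,F)$, and then evaluates these on one-dimensional graded spaces exactly as in your parity count (e.g.\ $S^n(\kk[-m])=0$ for $m$ odd). Your argument just unpacks that cited formula---$\sym_n$-invariants of the Koszul-signed permutation action on $\Hom^*(E,F)^{\otimes n}$, with one factor of the sign character per sign-twisted linearisation---so the sign bookkeeping you flag as the hard part is precisely what the graded $S^n$ and $\bigwedge^n$ encode.
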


\begin{proof}
 All of this follows from the equivariant K\"unneth formula which says that
\begin{align*}
\Hom^*(E\poslinn, F\poslinn)= \Hom^*(E\neglinn, F\neglinn)&= S^n\Hom^*(E,F)\,.
\\
\Hom^*(E\neglinn, F\poslinn)= \Hom^*(E\poslinn, F\neglinn)&= \medwedge{n}\Hom^*(E,F)\,.
\end{align*}
Note that both the symmetric and the exterior product are formed in the graded sense. For example, if $m\in \IN$ is odd, then
\[
 S^n(\kk[-m])\cong \bigl(\medwedge{n} \kk\bigr)[-mn]=0\,.\qedhere
\]
\end{proof}

\begin{corollary}\label{Cor:inducedtree}
Let $\{E_i\}$ be $k$-spherelike objects in $\Db(X)$ which form a tree $Q$. Then there is a choice of signs $\eps_i=\pm 1$ such that the $\IP^n[k]$-like objects $E_1^{\eps_1\linearised{n}},\ldots,E_m^{\eps_m\linearised{n}}$ form the same tree $Q$.
Additionally, if the $E_i$ are $k$-spherical, then this is a configuration of $\IP^n[k]$-objects.
\end{corollary}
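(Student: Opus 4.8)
The plan is to reduce the statement to a combinatorial sign-bookkeeping problem and then exploit the acyclicity of the tree. By the first part of \autoref{Prop:gradedhom}, each of the two objects $E_i\poslinn$ and $E_i\neglinn$ is already a $\IP^n[k]$-like object (and a $\IP^n[k]$-object as soon as $E_i$ is $k$-spherical), independently of any choice; in particular each is indecomposable, since its endomorphism algebra $\kk[t]/t^{n+1}$ is local. Thus the only freedom, and the only thing to arrange, is a sign $\eps_i\in\{+,-\}$ for each vertex $i$ so that the collection $\{E_i^{\eps_i\linearised{n}}\}$ realises exactly the adjacencies of $Q$.

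First I would record the shape of the Hom-spaces. For non-adjacent $i,j$ we have $\Hom^*(E_i,E_j)=0$, so the graded K\"unneth formula of \autoref{Prop:gradedhom} gives $\Hom^*(E_i^{\eps_i\linearised{n}},E_j^{\eps_j\linearised{n}})=0$ for every choice of signs (both $S^n$ and $\medwedge{n}$ of the zero space vanish, as $n\ge 2$); hence non-edges are automatically preserved. For an edge $\{i,j\}$ write $\Hom^*(E_i,E_j)=\kk[-h_{ij}]$ and $\Hom^*(E_j,E_i)=\kk[-h_{ji}]$. Encoding $+\leftrightarrow 0$ and $-\leftrightarrow 1$ in $\IZ/2$, \autoref{Prop:gradedhom} says precisely that $\Hom^*(E_i^{\eps_i\linearised{n}},E_j^{\eps_j\linearised{n}})$ is one-dimensional, equal to $\kk[-nh_{ij}]$, exactly when $\eps_i+\eps_j\equiv h_{ij}\pmod 2$, and vanishes otherwise; likewise for the opposite direction with $h_{ji}$. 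Since both directions are governed by the same sum $\eps_i+\eps_j$, realising the edge in both directions at once forces $h_{ij}\equiv h_{ji}\pmod 2$. This parity compatibility is exactly the input supplied in the spherical case: the $k$-Calabi--Yau property gives $\Hom^p(E_i,E_j)\cong(\Hom^{k-p}(E_j,E_i))^\vee$, hence $h_{ji}=k-h_{ij}$, and $k$ even yields $h_{ij}\equiv h_{ji}$. I would use this to attach a single well-defined parity $\delta_{ij}\in\IZ/2$ to each edge.

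The heart of the argument is then to solve the affine $\IZ/2$-system $\eps_i+\eps_j=\delta_{ij}$ ranging over all edges $\{i,j\}$ of $Q$. I would do this by rooting the tree at an arbitrary vertex, fixing its sign freely, and propagating outward: each new vertex is reached through a unique edge, which determines its sign via the corresponding equation. Because $Q$ is a tree, every vertex is reached exactly once and is never constrained along two different paths, so no consistency condition can be violated and the system always has a solution. This is the single place where acyclicity is essential, and is the main (though, on a tree, easily surmountable) obstacle: on a graph carrying an odd cycle the system can be inconsistent, which is precisely why the statement is restricted to trees.

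Finally I would assemble the conclusion. With the signs $\eps_i$ chosen as above, every edge of $Q$ produces $\Hom^*(E_i^{\eps_i\linearised{n}},E_j^{\eps_j\linearised{n}})=\kk[-nh_{ij}]$ together with its one-dimensional partner in the opposite direction, while every non-edge produces zero; combined with indecomposability, this shows that $\{E_i^{\eps_i\linearised{n}}\}$ is a $Q$-configuration, i.e.\ it forms the same tree $Q$. The supplement is then immediate: if the $E_i$ are $k$-spherical, \autoref{Prop:gradedhom} upgrades each $E_i^{\eps_i\linearised{n}}$ from $\IP^n[k]$-like to a genuine $\IP^n[k]$-object, so the configuration just constructed consists of $\IP^n[k]$-objects.
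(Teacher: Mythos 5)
Your proposal follows essentially the same route as the paper's own proof: observe that non-edges are preserved for any choice of signs, fix a root of $Q$, and propagate signs outward along the tree, each new vertex being determined by the unique edge through which it is reached, with acyclicity guaranteeing consistency. Your reformulation as an affine $\IZ/2$-linear system is just a clean repackaging of the paper's induction, and the auxiliary points you supply (indecomposability via $\End(F_i)=\kk$, vanishing of $S^n$ and $\medwedge{n}$ of the zero space) are correct.

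There is one place where your write-up is actually more careful than the paper's, and it is worth making explicit. A $Q$-configuration (\autoref{def:tree}) requires \emph{both} $\Hom^*(F_i,F_j)$ and $\Hom^*(F_j,F_i)$ to be one-dimensional for adjacent $i,j$; the paper's proof only verifies the single direction $\Hom^*(F_{i_0},F_j)$. You check both directions and correctly observe that, since both are governed by the same quantity $\eps_i+\eps_j\in\IZ/2$, any consistent choice of signs forces the edge-parity condition $h_{ij}\equiv h_{ji}\pmod 2$. As you note, this parity is automatic when the $E_i$ are $k$-spherical (Serre duality gives $h_{ji}=k-h_{ij}$, and $k$ is even, this being an implicit hypothesis of the corollary via \autoref{Prop:gradedhom}), but the first assertion of the corollary concerns merely spherelike objects, for which no such duality is available. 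So your argument establishes the first part only under this additional parity assumption on each edge --- but this is not a defect you introduced: the paper's proof tacitly relies on the same condition by omitting the reverse Hom check altogether. In the paper's applications (e.g.\ the tree of sheaves $\OO_{C_i}$ on $(-2)$-curves, where $h_{ij}=h_{ji}=1$) the condition holds, so nothing downstream is affected; but as a statement about arbitrary spherelike trees the corollary needs this hypothesis, and yours is the write-up that makes it visible.
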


\begin{proof}
We start with some vertex $i_0$ of $Q$ and set $F_{i_0} \coloneqq E_{i_0}\poslinn$. 
Then, given an adjacent $j$ of $i_0$, the graded Hom-space $\Hom^*(E_{i_0}, E_j)$ is one-dimensional, 
hence concentrated in one degree, say $d$. 
We set $\eps_j \coloneqq (-1)^{d}$ and $F_j \coloneqq E_j^{\eps_{j}\linearised{n}}$. 
By \autoref{Prop:gradedhom}, $F_{i_0}$ and $F_j$ are $\IP^n[k]$-like objects and $\Hom^*(F_{i_0}, F_j)=\kk[dn]$ is one-dimensional. 
In other words, $F_{i_0}$ and $F_j$ form an $A_1$-tree of $\IP^n[k]$-like objects. 
Since $Q$ is a tree, we can continue inductively and end up with a $Q$-tree $\{F_j=E_j^{\eps_j\linearised{n}}\}$ of $\IP^n[k]$-like objects.      
\end{proof}

\begin{remark}
\label{rem:closing-condition}
The corollary yields also more general configurations of $\IP$-like objects, provided that for each cycle the signs can be attributed consistently. We do not spell out the details, but provide an example.

By a result of Kodaira, cycles of $(-2)$-curves $C_i$ appear as singular fibres in elliptic fibrations of surfaces; see the book \cite[\S V.7]{Barth-etal} by Barth, Hulek, Peters and van den Ven. 
Hence, such a cycle forms a cycle of spherical objects $\OO_{C_i}$, as $\Hom^*(\OO_{C_i},\OO_{C_j})$ is non-zero if and only if the curves $C_i$ and $C_j$ intersect.
Consequently, these objects induce a cycle of $\IP^n$-objects, provided the cycle is of even length.
\end{remark}

\subsection{A geometric example of a tree of $\IP$-objects}
\label{app:geometric-pns}

Let $Q$ be a tree and $X$ be a smooth quasi-projective surface together with a $Q$-configuration of $(-2)$-curves. This means that, for every vertex $i$ of the tree $Q$, there is a $(-2)$-curve $\IP^1\cong C_i\subset X$, $C_i$ and $C_j$ intersect in one point if there is an edge joining $i$ and $j$ and they do not intersect otherwise.
Note that such a configuration might not exist for any given tree $Q$; see \cite[\S 6]{Hochenegger-Ploog} for sufficient criteria.

The objects $\OO_{C_i} \in \Db(X)$ form a $Q$-tree of $2$-spherical objects with 
$\Hom^*(\OO_{C_i}, \OO_{C_j})=\kk[-1]$ for adjacent $i$ and $j$; see \cite[Ex.\ 3.5]{Seidel-Thomas}. 
By \autoref{Cor:inducedtree}, there is an induced $Q$-tree of $\IP^n$-objects of the form $\OO_{C_i}^{\eps_i\linearised n}$ in $\D^b_{\sym_n}(X^n)$. Here we have to choose opposite signs $\eps_i =-\eps_j$ for adjacent $i$ and $j$, as the graded Hom-space is concentrated in the odd degree 1.

We use the derived McKay correspondence as mentioned in \autoref{rem:geometric-interpretation}
(recall that we omit $R$ and $L$ in front of derived functors)
\[
 \Phi \coloneqq p_*\circ q^*\colon \Db(X^{[n]})\isom \Db_{\sym_n}(X^n)
\]
to interpret this as a tree of $\IP^n$-objects on the Hilbert scheme $X^{[n]}$. 
Here we denote by $q\colon \cZ\to X^{[n]}$  and $p\colon \cZ\to X^n$ the projections from the universal family of $\sym_n$-clusters $\cZ\subset X^{[n]}\times X^n$. 
Hence there is a commutative diagram 
\[
\begin{tikzcd}
\cZ \ar[r, "p"'] \ar[d, "q"'] & X^n \ar[d, "\pi"] \\
X^{[n]} \ar[r, "\mu"] & X^{(n)}
\end{tikzcd}
\]
where $X^{(n)} \coloneqq X^n/\sym_n$ is the symmetric product, $\pi$ is the $\sym_n$-quotient morphism, and $\mu$ is the Hilbert--Chow morphism. Furthermore, $\cZ\cong (X^{[n]}\times_{ X^{(n)}}X^n)_{\mathsf{red}}$ is the reduced fibre product of this diagram.
Note that every closed subscheme $C$ of $X$
induces a canonical closed embedding $C^{[n]}\hookrightarrow X^{[n]}$.

\begin{proposition}
\label{prop:CMcKay}
For $C\subset X$ a smooth curve, we have $\Phi(\OO_{C^{[n]}})\cong \OO_C\poslinn$. 
\end{proposition}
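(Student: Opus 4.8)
The plan is to compute $\Phi(\OO_{C^{[n]}})=p_*q^*\OO_{C^{[n]}}$ directly by restricting the whole McKay diagram to the curve $C$. Here $\OO_{C^{[n]}}$ denotes the structure sheaf of the closed subscheme $\iota\colon C^{[n]}\hookrightarrow X^{[n]}$, and the relevant geometric object is the restricted universal family $\cZ_C\coloneqq q^{-1}(C^{[n]})=\cZ\times_{X^{[n]}}C^{[n]}$. First I would show that $q^*\OO_{C^{[n]}}\cong\OO_{\cZ_C}$ with no higher derived terms, and then that $p$ identifies $\cZ_C$ with the subscheme $C^n\subset X^n$, so that $p_*\OO_{\cZ_C}\cong\OO_{C^n}=\OO_C^{\boxtimes n}$. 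Tracking the $\sym_n$-action throughout will show that the resulting linearisation is the canonical one, giving $\OO_C\poslinn$.

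For the first step I would use that $C$ is a smooth curve, so that $C^{[n]}=\mathrm{Sym}^n C$ is \emph{smooth} of dimension $n$; since $X^{[n]}$ is a smooth variety (Fogarty), the closed immersion $\iota$ is automatically a regular embedding of codimension $n$. I would then invoke the structural results on the isospectral Hilbert scheme (Haiman \cite{Haiman}): $\cZ$ is Cohen--Macaulay of dimension $2n$ and $q$ is finite and flat of degree $n!$. Flatness of $q$ yields flat base change $q^*\iota_*\OO_{C^{[n]}}\cong k_*\OO_{\cZ_C}$ for the inclusion $k\colon\cZ_C\hookrightarrow\cZ$; and since $q$ is finite flat over the smooth, hence Cohen--Macaulay, base $C^{[n]}$, the scheme $\cZ_C$ is Cohen--Macaulay of pure dimension $n$. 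In particular the pullback is concentrated in degree $0$.

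The heart of the argument is the identification $\cZ_C\cong C^n$. The composite $p\circ k\colon\cZ_C\to X^n$ has set-theoretic image $C^n$, and I would check it is bijective onto $C^n$: an ordered tuple $(x_1,\dots,x_n)\in C^n$ determines the effective divisor $\sum_i[x_i]$ on $C$, which is the \emph{unique} length-$n$ subscheme of $C$ with that cycle, and hence a single point of $\cZ_C$ lying over it. Thus $p\circ k$ is finite and birational. As $\cZ_C$ is Cohen--Macaulay (so satisfies Serre's condition $S_1$) and generically reduced (over the distinct-point locus $q$ is \'etale, so $\cZ_C$ is smooth there), it is reduced; being finite and bijective over the irreducible $C^n$ it is also irreducible, hence integral. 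A finite birational morphism from an integral scheme onto the normal variety $C^n$ is an isomorphism, so $p\circ k\colon\cZ_C\isom C^n$ is a closed immersion onto $C^n\subset X^n$ and $\Phi(\OO_{C^{[n]}})\cong(p\circ k)_*\OO_{\cZ_C}\cong\OO_{C^n}=\OO_C^{\boxtimes n}$. Finally, since $q$ is $\sym_n$-invariant, $q^*\OO_{C^{[n]}}$ carries its canonical equivariant structure, which transports under the $\sym_n$-equivariant isomorphism $\cZ_C\cong C^n$ to the permutation linearisation on $\OO_{C^n}$; this is precisely $\OO_C\poslinn$ and not its sign-twist.

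The main obstacle I anticipate is the scheme-theoretic identification $\cZ_C\cong C^n$ — in particular controlling the structure of the preimage $q^{-1}(C^{[n]})$ over the diagonals, where the geometry of $\cZ$ is genuinely complicated. The argument above reduces this to the reducedness of $\cZ_C$, and this is exactly where Haiman's Cohen--Macaulayness of the isospectral Hilbert scheme is essential: without it one could not exclude embedded or thickened components along the exceptional loci, and the clean identification of the pushforward would fail.
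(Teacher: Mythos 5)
Your proof is correct, and it takes a genuinely different route from the one in the paper. The paper's own argument is a three-line geometric identification: since $C$ is a smooth curve, the Hilbert--Chow morphism $C^{[n]}\to C^{(n)}$ is an isomorphism, so the composite $C^{[n]}\hookrightarrow X^{[n]}\xrightarrow{\;\mu\;}X^{(n)}$ is a closed embedding with image $C^{(n)}$; feeding this into the description of $\cZ$ as the \emph{reduced} fibre product $(X^{[n]}\times_{X^{(n)}}X^n)_{\mathsf{red}}$, the paper concludes that $p$ maps $q^{-1}(C^{[n]})$ isomorphically onto $C^n$, whence $\Phi(\OO_{C^{[n]}})\cong\OO_{C^n}\cong\OO_C\poslinn$. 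As written, this identifies the \emph{reduced} preimage with $C^n$ and leaves implicit exactly the two points you single out: that the derived pullback $Lq^*\OO_{C^{[n]}}$ is the structure sheaf of the \emph{scheme-theoretic} preimage (flatness of $q$), and that this preimage carries no embedded or thickened structure over the diagonals (reducedness). Your argument supplies both, via Haiman's theorem \cite{Haiman} that $\cZ$ is Cohen--Macaulay and $q$ is finite flat of degree $n!$, followed by a Zariski-main-theorem-style conclusion (finite and birational onto the normal variety $C^n$ implies isomorphism); this buys rigour and robustness at the price of invoking a much deeper input, whereas the paper's proof buys brevity by working with the reduced structure from the start. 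Both arguments pivot on the same elementary fact --- a length-$n$ subscheme of a smooth curve is determined by its cycle --- which the paper packages as $C^{[n]}\cong C^{(n)}$ and you use for the bijectivity of $p\circ k$. Two steps of yours deserve an extra word, though neither is a real gap: finiteness of $p\circ k$ holds because $\pi\circ p\circ k$ equals the finite morphism $q|_{\cZ_C}$ followed by the isomorphism $C^{[n]}\isom C^{(n)}$ and $\pi$ is separated; and the inference from bijectivity to birationality is not automatic in positive characteristic (Frobenius), the cleanest fix being a degree count, $\deg\bigl(q|_{\cZ_C}\bigr)=n!=\deg\bigl(\pi|_{C^n}\bigr)$, which forces $\deg(p\circ k)=1$, or the observation that $p\circ k$ restricts to an isomorphism over the locus of pairwise distinct points, which you already use for generic reducedness.
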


\begin{proof}
For a smooth curve $C$, the Hilbert--Chow morphism $C^{[n]}\to C^{(n)}$ is an isomorphism. 
So, $C^{[n]} \isom C^{(n)} \into X^{(n)}$ is a closed embedding with image $C^{(n)}$. 
Consequently, using the diagram above, $p\colon \cZ\to X^n$ maps $q^{-1}C^{[n]}$ isomorphically to $C^n$. 
Hence, $\Phi(\OO_{C^{[n]}})\cong \OO_{C^n}\cong\OO_C^{\linearised n}$. 
\end{proof}

Let $i$ and $j$ be two adjacent vertices of $Q$. 
Then $C_i\cap C_j$ is a reduced point, hence cannot contain a subscheme of length $n\ge 2$. 
Thus, $C_i^{[n]}$ and $C_j^{[n]}$ do not intersect inside $X^{[n]}$. 
So we see geometrically that $\Hom^*(\OO_{C_i}\poslinn, \OO_{C_j}\poslinn)=0$ for support reasons.

For $n=2$, we give a concrete description of the image of $\OO_{C}\neglinn$ under the McKay correspondence,
where $C=C_i$ is one of the rational curves.
Denote by $\delta\colon X\to X^{(2)}$ the diagonal embedding into the symmetric product.
Then $E \coloneqq \mu^{-1}\delta(X)$ is the exceptional divisor of $X^{[2]}\to X^{(2)}$.
There is a line bundle $\LL\in \Pic(X^{[2]})$ such that $\LL^2\cong \OO(E)$; see \cite[Lem.\ 3.7]{Lehn-chern}.

We summarise this situation in the following diagram, consisting of a blow-up square on the right and its restriction to $C$, so both are cartesian:
\begin{equation}
\tag{$\square$}
\label{diag:E}
\begin{gathered}
\begin{tikzcd}
\llap{$\mu^{-1}\delta(C) \eqqcolon\,$} \Sigma_C \ar[r, hook]  \ar[d] & E \ar[r, hook, "\iota"'] \ar[d, "\rho"'] & X^{[2]} \rlap{$\,= \Bl_{\delta(X)}(X^{(2)})$} \ar[d, "\mu"] \\
C \ar[r, hook] & X \ar[r, hook, "\delta"] & X^{(2)}
\end{tikzcd}
\end{gathered}
\end{equation}
Note that the restrictions $\rho\colon E\to X$ and $\Sigma_C \to C$ of $\mu$ are $\IP^1$-bundles
(actually, $\Sigma_C$ is isomorphic to the Hirzebruch surface $\Sigma_4$).

\begin{proposition}
\label{prop:YC}
Denote by $Y_C$ the closed subscheme $\Sigma_C\cup C^{[2]}$ in $X^{[2]}$. 
Then $\Phi(\OO_{Y_C}\otimes \LL)\cong \OO_C\neglin{2}$.  
\end{proposition}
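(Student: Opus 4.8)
The plan is to compute $\Phi(\OO_{Y_C}\otimes\LL)$ directly through the double-cover presentation of the McKay correspondence. Recall that the universal family $\cZ$ is, for $n=2$, the blow-up $\cZ\cong\Bl_{\Delta}(X^2)$ of the diagonal $\Delta\subset X^2$, that $p\colon\cZ\to X^2$ is the blow-down with exceptional divisor $\widetilde E:=p^{-1}(\Delta)$, and that $q\colon\cZ\to X^{[2]}$ is the quotient by the $\sym_2$-action lifted from $X^2$. This $q$ is a double cover \emph{ramified} exactly along $\widetilde E$, and $q|_{\widetilde E}\colon\widetilde E\isom E$ is an isomorphism. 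The decisive input is the behaviour of $\LL$: since $q$ is ramified along $\widetilde E$ we have $q^*\OO(E)\cong\OO_{\cZ}(2\widetilde E)$, and as $\LL^2\cong\OO(E)$ this forces $q^*\LL\cong\OO_{\cZ}(\widetilde E)$ (the possible two-torsion ambiguity will not affect the local computation below). Crucially, $\sym_2$ fixes $\widetilde E$ pointwise and acts by $-1$ on its conormal bundle; hence the natural $\sym_2$-linearisation on $q^*\LL=\OO_{\cZ}(\widetilde E)$ is the one responsible for introducing the sign character. Thus the claim reduces to computing the $\sym_2$-equivariant sheaf $p_*\bigl(q^*\OO_{Y_C}\otimes\OO_{\cZ}(\widetilde E)\bigr)$ on $X^2$ and identifying it with $\OO_{C\times C}$ carrying the sign linearisation.

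To organise the computation I would split $Y_C=\Sigma_C\cup C^{[2]}$ by the Mayer--Vietoris sequence
\[
0\to\OO_{Y_C}\to\OO_{\Sigma_C}\oplus\OO_{C^{[2]}}\to\OO_{\Sigma_C\cap C^{[2]}}\to 0,
\]
where the scheme-theoretic intersection $\Sigma_C\cap C^{[2]}$ is the diagonal conic in $C^{[2]}\cong\IP^2$. For the $C^{[2]}$-summand, its strict transform under $p$ is $C\times C$, and by \autoref{prop:CMcKay} one has $p_*q^*\OO_{C^{[2]}}\cong\OO_C\poslin{2}$; the extra factor $\otimes\,\OO_{\cZ}(\widetilde E)$ only modifies this along $\widetilde E$, that is, along the diagonal $\Delta_C$. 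The summand $\OO_{\Sigma_C}$ is supported inside the ramification divisor $\widetilde E$: here $q$ is ramified, so $q^*\OO_{\Sigma_C}$ acquires a length-two thickening in the $\widetilde E$-direction, and after twisting by $\OO_{\cZ}(\widetilde E)$ and applying $p_*$ it contributes a sheaf supported on $\Delta_C\subset C\times C$. The intersection term is likewise supported on $\Delta_C$.

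Assembling the three contributions, I expect the thickenings and the Mayer--Vietoris connecting maps to combine so that the total pushforward is exactly the structure sheaf $\OO_{C\times C}$, with no higher cohomology surviving. The remaining and most delicate point is the \emph{linearisation}: the twist by $q^*\LL=\OO_{\cZ}(\widetilde E)$, through the $(-1)$-action on the conormal of $\widetilde E$, turns the canonical linearisation — the one that by \autoref{prop:CMcKay} would produce $\OO_C\poslin{2}$ — into the sign-twisted one, yielding $\OO_C\neglin{2}$.

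The hard part will be the local analysis along the ramification locus $\widetilde E$: one must compute the derived pullback $Lq^*\OO_{Y_C}$ there (the $\Sigma_C$-component lies in the branch locus, so $q^*$ genuinely produces nilpotents and potential $\Tor$-contributions), carry out the twist by $\OO_{\cZ}(\widetilde E)$, and then rigorously track the induced $\sym_2$-action to certify that the sign — rather than the canonical — linearisation results. Concretely I would work in local coordinates near a point of $\Sigma_C\cap C^{[2]}$, where $X^2\to X^{(2)}$ is modelled on the $\sym_2$-quotient of $(\IA^2)^2$ restricted to $C$, and check on these models that the $-1$ on the conormal of $\widetilde E$ is exactly what upgrades $\poslin{2}$ to $\neglin{2}$.
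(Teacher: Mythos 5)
Your route is genuinely different from the paper's: the paper never computes $\Phi=p_*q^*$ on the kernel level. Instead it invokes the intertwining formula $\Phi^{-1}(\Phi(\blank)\otimes\alt)\cong\LL\otimes\TTT_{\Theta}(\blank\otimes\LL^{-2})$ from Krug--Ploog--Sosna, so that by \autoref{prop:CMcKay} the proposition reduces to the identity $\TTT_{\Theta}(\OO_{C^{[2]}}(-E))\cong\OO_{Y_C}$; this in turn is proved in \autoref{lem:OCtwist} by analysing the twist triangle and pinning down the resulting extension through the computation $\Ext^1(\OO_{\Sigma_C},\OO_{C^{[2]}}(-\Sigma_C\cap C^{[2]}))\cong\IC$. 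Your double-cover/blow-up computation is viable in principle and would be more self-contained (no appeal to the Krug--Ploog--Sosna formulae), and your identification of the sign mechanism is correct: the involution fixes $\widetilde E$ pointwise and acts by $-1$ on its conormal bundle, which is exactly why the canonical linearisation of $q^*\LL\cong\OO_{\cZ}(\widetilde E)$ produces the character $\alt$.

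However, the core of your argument is asserted rather than proved, and that is a genuine gap. After pulling back the Mayer--Vietoris sequence (which is exact on the nose, see below) and pushing forward, the three pieces are: $\OO_{C\times C}(\Delta_C)$ from $\OO_{C^{[2]}}\otimes\LL$ (note: not $\OO_{C\times C}$ --- the preimage $q^{-1}(C^{[2]})$ is the strict transform $\cong C\times C$, on which $\OO_{\cZ}(\widetilde E)$ restricts to $\OO_{C\times C}(\Delta_C)$), and two complexes supported on $\Delta_C$ from the $\Sigma_C$- and intersection pieces. To conclude that the fibre of the resulting map is exactly $\OO_{C\times C}$ with the sign linearisation, you must identify the connecting morphisms of this triangle, including their equivariant structure; if, say, a connecting map degenerated, you would obtain a direct sum or a different extension. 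This identification is precisely the analogue of the paper's $\Ext^1$-computation, and nothing in your proposal addresses it --- ``I expect the thickenings and the Mayer--Vietoris connecting maps to combine'' is the statement to be proved. Two further points. First, your worry about $\Tor$-contributions of $Lq^*$ is moot: for $n=2$ the map $q$ is a finite surjective morphism between smooth varieties, hence flat by miracle flatness, so $Lq^*=q^*$ and the pulled-back Mayer--Vietoris sequence stays exact; the difficulty is not there. Second, the ``two-torsion ambiguity'' in $q^*\LL\cong\OO_{\cZ}(\widetilde E)$ cannot be waved away for a global statement, since $\Pic(X^{[2]})$ may well contain $2$-torsion (e.g.\ for an Enriques surface $X$), and a twist by a nontrivial torsion line bundle changes the answer; the fix is to characterise Lehn's $\LL$ through the double-cover decomposition $q_*\OO_{\cZ}\cong\OO_{X^{[2]}}\oplus\LL^{-1}$, which yields $q^*\LL\cong\OO_{\cZ}(\widetilde E)$ canonically, but this needs to be said and matched with the $\LL$ of the proposition.
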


Before we prove the proposition, we need to recall another feature of the Hilbert scheme of two points, namely
that the functor 
\[
\Theta \coloneqq \iota_*\rho^* \colon \Db(X)\to \Db(X^{[2]})
\]
is spherical; see \cite[Rem.\ 4.3]{Krug-autos} or \cite[Thm 4.26(ii)]{Krug-Ploog-Sosna}. This means, in particular, that the associated twist functor $\TTT_{\Theta}$, defined by the triangle of functors
\[
\Theta\Theta^R \xrightarrow{\eps} \id  \to \TTT_{\Theta}\,,
\]
where $\eps$ is the counit of adjunction, is an autoequivalence of $\Db(X^{[n]})$.
The right adjoint $\Theta^R$ of $\Theta$ is given by
\begin{equation}
\tag{$\ast$}
\label{eq:right-adjoint}
\Theta^R\cong \rho_* \iota^!\cong \rho_*\bigl(\iota^*(\blank)\otimes \OO_E(E)\bigr)[-1]\,.
\end{equation}

\begin{lemma}
The subvarieties $C^{[2]}$ and $E$ of $X^{[2]}$ intersect transversally and $\rho$ maps the scheme-theoretic intersection $E \cap C^{[2]}$ isomorphically to $\delta(C)\subset X^{(2)}$.
\end{lemma}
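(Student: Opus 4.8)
The plan is to verify transversality by a direct tangent-space computation carried out inside the \emph{smooth} variety $X^{[2]}$, rather than inside the singular symmetric product $X^{(2)}$ (whose singularity along $\delta(X)$ is exactly what makes a naive ``strict transform'' argument delicate). First I would identify the intersection set-theoretically. The points of $E$ are the non-reduced length-$2$ subschemes, that is, pairs consisting of a support point $c\in X$ together with a tangent direction in $T_cX$, and $\rho$ is the map remembering $c$; the points of $C^{[2]}=C^{(2)}$ are the length-$2$ subschemes contained in $C$. A subscheme lying in both is a non-reduced length-$2$ subscheme supported at some $c$ and contained in the smooth curve $C$, and there is exactly one such for each $c\in C$, namely the first infinitesimal neighbourhood of $c$ along $C$. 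Hence $Z\mapsto \rho(Z)$ is a bijection $E\cap C^{[2]}\to C$, and the dimension count $\dim E+\dim C^{[2]}-\dim X^{[2]}=3+2-4=1=\dim C$ shows the intersection has the expected dimension, so the only thing left is to check that the intersection is transversal.

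To prove transversality at a point $Z\in E\cap C^{[2]}$ I would work in local coordinates $(u,v)$ on $X$ with $c=(0,0)$ and $C=\{v=0\}$, so that $Z$ has ideal $I_Z=(v,u^2)$ and $\OO_Z=\IC[u]/(u^2)$. Using $T_ZX^{[2]}=\Hom_{\OO_X}(I_Z,\OO_Z)$ and the fact that $v,u^2$ form a regular sequence, so that the only syzygy is the Koszul relation $u^2\cdot v-v\cdot u^2=0$, which maps to zero in $\OO_Z$ because both $v$ and $u^2$ vanish there, one finds $T_ZX^{[2]}\cong\OO_Z^{\oplus 2}\cong\IC^4$, a tangent vector being a pair $\bigl(\phi(v),\phi(u^2)\bigr)$. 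I would then cut out $T_ZE$ and $T_ZC^{[2]}$ as subspaces by differentiating explicit families. Deforming within $C^{[2]}$ keeps $v$ in the ideal and deforms $u^2$ to a general monic quadratic, yielding the plane $\{\phi(v)=0\}$; deforming within $E$ means moving the support to $(a,b)$ and tilting the tangent direction, with ideal $\bigl((v-b)-\mu(u-a),\,(u-a)^2\bigr)$, which differentiates to the hyperplane where the constant term of $\phi(u^2)$ vanishes. These two subspaces visibly span $T_ZX^{[2]}$ and meet in a line, which is exactly transversality; since the computation is invariant under translation along $C$ it holds at every intersection point, so $E\cap C^{[2]}$ is smooth of dimension $1$.

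Finally, to upgrade the set-theoretic bijection to an isomorphism onto $\delta(C)$, I would read off from the same local model that the differential of $\rho$ (the projection sending a fat point to its support $(a,b)$) carries the intersection line $T_Z(E\cap C^{[2]})$, which is the direction moving the support along $C$, isomorphically onto $T_cC$. A bijective morphism of smooth curves with everywhere bijective differential is an isomorphism, so $\rho$ restricts to an isomorphism $E\cap C^{[2]}\xrightarrow{\sim}C$, which becomes $\delta(C)$ after the closed embedding $\delta$. The main obstacle is the middle step, namely correctly identifying $T_ZE$ inside $\Hom_{\OO_X}(I_Z,\OO_Z)$: this requires choosing a good affine chart of the $\IP^1$-bundle $E$ around $Z$ and differentiating the resulting family of fat-point ideals, and the bookkeeping of which first-order terms survive modulo $I_Z$ is where the argument is most error-prone.
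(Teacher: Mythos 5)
Your proof is correct, but it takes a genuinely different route from the paper's, and in fact runs the logic in the opposite direction. The paper first observes that the second assertion implies the first: once the scheme-theoretic intersection is identified with the smooth curve $\delta(C)$, it is smooth of the expected dimension $1$, which is exactly transversality. It then proves the second assertion \emph{downstairs} in the singular symmetric product: since $C^{[2]}\hookrightarrow X^{[2]}\to X^{(2)}$ is a closed embedding with image $C^{(2)}$, and the right-hand square of the diagram is cartesian (i.e.\ $E=\mu^{-1}(\delta(X))$ scheme-theoretically, because $X^{[2]}=\Bl_{\delta(X)}(X^{(2)})$), the intersection $E\cap C^{[2]}$ maps isomorphically to $\delta(X)\cap C^{(2)}$, and the equality $\delta(X)\cap C^{(2)}=\delta(C)$ is then a short ideal computation in the invariant ring $\IC[s_1,s_2,t_1^2,t_1t_2,t_2^2]$, where $I_{\delta(X)}=(t_1^2,t_1t_2,t_2^2)$ and $I_{C^{(2)}}=(s_2,t_2^2)$. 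You instead stay \emph{upstairs} in the smooth Hilbert scheme: you prove transversality first, by a direct computation with $T_ZX^{[2]}=\Hom_{\OO_X}(I_Z,\OO_Z)$ and first-order deformations of explicit charts, and your identifications are correct --- writing a tangent vector as the pair $(\phi(v),\phi(u^2))\in\OO_Z^{\oplus 2}$, you get $T_ZC^{[2]}=0\oplus\OO_Z$ and $T_ZE=\OO_Z\oplus\IC u$, which span $T_ZX^{[2]}$ and meet in the line $0\oplus\IC u$, on which $d\rho$ is an isomorphism onto $T_cC$ --- and you then deduce the second assertion from bijectivity plus everywhere-bijective differential. The paper's route buys brevity and avoids deformation theory entirely (transversality falls out as a corollary of one ideal computation), at the cost of using the cartesianness of the blow-up square; your route avoids the singular geometry of $X^{(2)}$ and the blow-up description altogether, at the cost of the chart bookkeeping you flagged and of the final upgrade from ``bijective \'etale'' to ``isomorphism'' --- which is harmless here, since $E\cap C^{[2]}$ is closed in the projective surface $C^{[2]}\cong\IP^2$, so the map to $C$ is proper, bijective and \'etale, hence an isomorphism.
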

\begin{proof}
The second assertion implies the first one since transversality means that the scheme-theoretic intersection $E\cap C^{[2]}$ is reduced and of the expected dimension $1$. 

The composition $C^{[2]}\hookrightarrow X^{[2]}\xrightarrow \rho X^{(2)}$ is a closed embedding with image $C^{(2)}\subset X^{(2)}$. Using the right cartesian diagram in \eqref{diag:E}, it follows that $\rho$ maps $E\cap C^{[2]}$ isomorphically to the scheme-theoretic intersection $\delta(X)\cap C^{(2)}$. 

Hence, we only have to prove that $\delta(X)\cap C^{(2)}=\delta(C)$. This question is local in the analytic topology so that we may assume that $X=\Spec \IC[x_1,x_2]$ and $C=\Spec \IC[x_1]$. We set $s_i=x_i+y_i$ and $t_i=x_i-y_i$ so that 
$X^2= \Spec\IC[s_1,s_2,t_1,t_2]$ with the natural action of $\sym_2=\langle \tau\rangle$ given by $\tau\cdot s_i=s_i$ and $\tau\cdot t_i=-t_i$. Therefore,
\[
 \reg(X^{(2)})=\IC[s_1,s_2,t_1,t_2]^{\sym_2}=\IC[s_1,s_2,t_1^2, t_1t_2,t_2^2]\,.
\]
The ideal of $\delta(X)\subset X^{(2)}$ is given by 
$I=(t_1,t_2)^{\sym_2}=(t_1^2, t_1t_2,t_2^2)$
and the ideal of $C^{(2)}\subset X^{(2)}$ is given by 
$J=(s_2,t_2)^{\sym_2}=(s_2, t_2^2)$.
Hence, \[\delta(X)\cap C^{(2)}= \Spec\bigl( \IC[s_1,s_2,t_1^2, t_1t_2,t_2^2]/(I+J)\bigr) =\Spec \IC[s_1]=\delta(C)\,.\qedhere\]   
\end{proof}

\begin{lemma}
\label{lem:OCtwist}
For the spherical twist $\TTT_\Theta$, we have $\TTT_\Theta(\OO_{C^{[2]}}(-E))\cong \OO_{Y_C}$. 
\end{lemma}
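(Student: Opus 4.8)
The plan is to compute $\TTT_{\Theta}(F)$ for $F=\OO_{C^{[2]}}(-E)$ directly from the defining triangle $\Theta\Theta^R(F)\xrightarrow{\eps}F\to\TTT_{\Theta}(F)$ and then to recognise the outcome as the structure sheaf of the union $Y_C$. First I would compute the right adjoint. Using \eqref{eq:right-adjoint}, $\Theta^R(F)=\rho_*\bigl(\iota^*F\otimes\OO_E(E)\bigr)[-1]$. By the preceding lemma the intersection $E\cap C^{[2]}$ is transversal, so the derived restriction is underived: $\iota^*\OO_{C^{[2]}}=\OO_{E\cap C^{[2]}}$, while $\iota^*\OO_{X^{[2]}}(-E)=\OO_E(-E)$. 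Hence $\iota^*F\otimes\OO_E(E)=\OO_{E\cap C^{[2]}}\otimes\OO_E(-E)\otimes\OO_E(E)=\OO_{E\cap C^{[2]}}$. Since $\rho$ restricts to an isomorphism $E\cap C^{[2]}\isom C$, applying $\rho_*$ gives $\Theta^R(F)\cong\OO_C[-1]$.

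Next I would apply $\Theta=\iota_*\rho^*$. As $\rho$ is a $\IP^1$-bundle with $\rho^{-1}(C)=\Sigma_C$, one has $\rho^*\OO_C=\OO_{\Sigma_C}$, so $\Theta\Theta^R(F)\cong\OO_{\Sigma_C}[-1]$. Feeding this into the twist triangle yields
\[
\OO_{\Sigma_C}[-1]\xrightarrow{\eps_F}\OO_{C^{[2]}}(-E)\to\TTT_{\Theta}(F),
\]
so $\TTT_{\Theta}(F)$ sits in a triangle $\OO_{C^{[2]}}(-E)\to\TTT_{\Theta}(F)\to\OO_{\Sigma_C}$. Running the long exact sequence of cohomology sheaves, whose outer terms are concentrated in degree $0$, shows at once that $\TTT_{\Theta}(F)$ is an honest sheaf $B$ fitting into a short exact sequence
\[
0\to\OO_{C^{[2]}}(-E)\to B\to\OO_{\Sigma_C}\to 0.
\]

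It remains to compare $B$ with $\OO_{Y_C}$. The scheme-theoretic union $Y_C=\Sigma_C\cup C^{[2]}$ satisfies the analogous sequence: the kernel of the restriction $\OO_{Y_C}\to\OO_{\Sigma_C}$ is, by the second isomorphism theorem applied to the ideals $I_{\Sigma_C}$ and $I_{C^{[2]}}$, the ideal of $D\coloneqq\Sigma_C\cap C^{[2]}=E\cap C^{[2]}$ inside $C^{[2]}$, which transversality identifies with $\OO_{C^{[2]}}(-E)$. Thus $B$ and $\OO_{Y_C}$ are both extensions of $\OO_{\Sigma_C}$ by $\OO_{C^{[2]}}(-E)$. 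Since $\TTT_{\Theta}$ is an autoequivalence and $F$ is indecomposable (as $C^{[2]}\cong\IP^2$ is integral), $B=\TTT_{\Theta}(F)$ is indecomposable, so its extension is non-split. Provided $\Ext^1(\OO_{\Sigma_C},\OO_{C^{[2]}}(-E))$ is one-dimensional, every non-split extension is isomorphic and $B\cong\OO_{Y_C}$.

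I expect the main obstacle to be exactly this last point: controlling $\Ext^1(\OO_{\Sigma_C},\OO_{C^{[2]}}(-E))$. This is a local-to-global $\Ext$ computation on the fourfold $X^{[2]}$ for two surfaces meeting transversally along $D\cong\IP^1$; transversality should keep the local $\sExt$-sheaves tractable and reduce the global contribution to cohomology supported on $D$, but the bookkeeping needs care. An alternative that sidesteps the dimension count is to unwind the adjunction $(\Theta,\Theta^R)$ and verify directly that the counit $\eps_F$ is, up to a non-zero scalar, the connecting morphism of the structure sequence of $Y_C$; this is more explicit but correspondingly more delicate.
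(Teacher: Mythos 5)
Your proposal reproduces the paper's proof essentially step for step: computing $\Theta^R(\OO_{C^{[2]}}(-E))\cong\OO_C[-1]$ from \eqref{eq:right-adjoint} together with the transversality of $E\cap C^{[2]}$, applying $\Theta$ to obtain the triangle $\OO_{C^{[2]}}(-E)\to\TTT_\Theta(\OO_{C^{[2]}}(-E))\to\OO_{\Sigma_C}$, passing to the long exact sequence of cohomology sheaves to reduce it to a short exact sequence of sheaves, and comparing that sequence with the canonical one $0\to\OO_{C^{[2]}}(-E\cap C^{[2]})\to\OO_{Y_C}\to\OO_{\Sigma_C}\to 0$ via the one-dimensionality of $\Ext^1(\OO_{\Sigma_C},\OO_{C^{[2]}}(-E))$. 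All the steps you actually carry out are correct.

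The genuine gap is precisely the step you leave conditional: you never prove that $\Ext^1(\OO_{\Sigma_C},\OO_{C^{[2]}}(-E))$ is one-dimensional, and your sketch of how you would prove it starts from a false premise. You describe the computation as one ``for two surfaces meeting transversally along $D\cong\IP^1$'', but two surfaces in the fourfold $X^{[2]}$ can never meet transversally along a curve: the intersection $D=\Sigma_C\cap C^{[2]}=E\cap C^{[2]}$ is a clean but \emph{excess} intersection (expected dimension $0$, actual dimension $1$). The transversality established in the preceding lemma concerns $E\cap C^{[2]}$, where $E$ is a divisor; it does not transfer to $\Sigma_C\cap C^{[2]}$. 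So the local $\sExt$-sheaves are not those of a transverse intersection, and the local-to-global argument you envision would have to invoke the clean-intersection formalism with its excess bundle. This is exactly how the paper closes this step: it cites \cite[Thm.~A.1]{CKS-ext}, which computes Ext groups between sheaves supported on cleanly intersecting submanifolds, yielding $\Ext^1(\OO_{\Sigma_C},\OO_{C^{[2]}}(-\Sigma_C\cap C^{[2]}))=\IC$. A second, smaller omission: knowing that $\TTT_\Theta(\OO_{C^{[2]}}(-E))$ is a non-split extension and that $\Ext^1$ is one-dimensional only gives the claim once you also check that the extension defining $\OO_{Y_C}$ is non-split; this is easy ($Y_C$ is connected, reduced and proper, so $\End(\OO_{Y_C})=H^0(\OO_{Y_C})=\IC$ and $\OO_{Y_C}$ is indecomposable), but it is needed --- the paper is admittedly equally terse on this point.
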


\begin{proof}
By $\reg_{C^{[2]}}(-E)$ we mean $\left.\reg_{X^{[2]}}(-E)\right|_{C^{[2]}}$.
Using \eqref{eq:right-adjoint} and the previous lemma, we compute
\[
\Theta^R(\OO_{C^{[2]}}(-E))\cong  \OO_C[-1]\,.
\]
Note that $\Theta(\OO_C)\cong \OO_{\Sigma_C}$, so the twist triangle applied to $\OO_{C^{[2]}}(-E)$ becomes, after shift,
\[
\OO_{C^{[2]}}(-E)\to \TTT_\Theta(\OO_{C^{[2]}}(-E))\to \OO_{\Sigma_C}\,.
\]
The long exact cohomology sequence shows that $\cH^i\bigl(\TTT_\Theta(\OO_{C^{[2]}}(-E))\bigr)=0$ for $i\neq 0$. Hence, the triangle reduces to the short exact sequence
\begin{align}\label{eq:es1}
0\to \OO_{C^{[2]}}(-E\cap C^{[2]})\to \cH^0\bigl(\TTT_\Theta(\OO_{C^{[2]}}(-E))\bigr)\to \OO_{\Sigma_C}\to 0.
\end{align}
As $\Sigma_C \cap C^{[2]} = E \cap C^{[2]}$, there is the canonical short exact sequence 
\begin{align}\label{eq:es2}
0\to \OO_{C^{[2]}}(-E\cap C^{[2]})\to \reg_{Y_C}\to \OO_{\Sigma_C}\to 0.
\end{align}
One can compute that $\Ext^1(\reg_{\Sigma_C},\OO_{C^{[2]}}(-\Sigma_C\cap C^{[2]}))=\IC$, using for example \cite[Thm.\ A.1]{CKS-ext}.
It follows that \eqref{eq:es1} and \eqref{eq:es2} coincide, so
\[
\TTT_\Theta(\OO_{C^{[2]}}(-E))\cong \cH^0\bigl(\TTT_\Theta(\OO_{C^{[2]}}(-E))\bigr)\cong\OO_{Y_C}\,.\qedhere
\]
\end{proof}

\begin{proof}[Proof of \autoref{prop:YC}]
Combining the formulae of \cite[Thm.\ 4.26]{Krug-Ploog-Sosna}, we get an isomorphism of functors 
\[
\Phi^{-1}(\Phi(\blank)\otimes \alt)\cong \LL\otimes \TTT_\Theta(\blank\otimes \LL^{-2})\,,
\]
where $\LL^2 = \OO(E)$.
Combining this with 
\autoref{prop:CMcKay} gives
\[
 \Phi^{-1}(\OO_C\neglin{2})\cong \LL\otimes \TTT_\Theta(\OO_{C^{[2]}}\otimes \LL^{-2})\,.
\]
Now the assertion follows by \autoref{lem:OCtwist}.
\end{proof}

For $C_i, C_j\in X$ two $(-2)$-curves which intersect in one point, $Y_{C_i}$ and $C_j^{[2]}$ intersect transversally in one point of $X^{[2]}$. This confirms geometrically that
\[
 \Hom^*_{\Db_{\sym_2}(X^2)}(\OO_{C_i}\neglin{2},\OO_{C_j}\poslin{2})\cong \Hom^*_{\Db(X^{[2]})}(\OO_{Y_{C_i}}\otimes\LL,\OO_{C_j^{[2]}})=\IC[-2]\,;   
\]
compare \autoref{Prop:gradedhom}.

\addtocontents{toc}{\protect\setcounter{tocdepth}{-1}}  % No toc entry for References

\bigskip
\begin{tabbing}
\emph{Contact:} \=\verb+andreas.hochenegger@unimi.it+ \\
                \>\verb+andkrug@mathematik.uni-marburg.de+
\end{tabbing}

\end{document}